\newtheorem{theorem}{Theorem}[section]
\newtheorem{Cor}{Corollary}[section]
\newtheorem{lemma}{Lemma}[section]
\newtheorem{remark}{Remark}[section]
\newtheorem{Def}{Definition}[section]
\newcommand{\F}{\mathcal{F}}
\newcommand{\D}{\mathcal{D}}
\numberwithin{equation}{section}
\newenvironment{proof}{\medskip\par\noindent{\bf Proof\/}:\quad}{\qquad
\raisebox{-0.5mm}{\rule{1.5mm}{1mm}}\vspace{6pt}}
\begin{document}
\title{Infinitely many  normalized solutions for a  quasilinear {S}chr\"{o}dinger  equation}
 \author{Xianyong Yang$^1$\, \quad Fukun Zhao$^2$ \thanks{Corresponding author. E-mail addresses:fukunzhao@163.com}\\
\footnotesize{\em 1. School\, of\,  Mathematics and Computer Science,\,Yunnan\,
Minzu\, University, \,Kunming \,650500\, P.R. China }\\
\footnotesize{\em 2. School\, of\,  Mathematics, \,Yunnan\,
Normal\, University, \,Kunming \,650500\, P.R. China }
}
\date{}
\maketitle

\begin{abstract}
In this paper, we are concerned with the following quasilinear {S}chr\"{o}dinger equation
\begin{equation*}
\begin{cases}
-\Delta u+ \mu u-\Delta(u^2)
u=g(u)~~\hbox{in}~~\mathbb{R}^N\\
\int_{\mathbb{R}^N}|u|^2dx=m,\\
u\in H^1(\mathbb{R}^N),
\end{cases}
\end{equation*}
where $N\geq2$, $m>0$ is a given constant, $\mu\in \mathbb{R}$ is a Lagrange multiplier, and $g$ satisfies the well-known Berestycki--Lions condition. The existence of infinitely many  normalized solutions is obtained via a minimax argument.
\end{abstract}
\par
\ \ \ \ \emph{Keywords: Quasilinear {S}chr\"{o}dinger equation;  Normalized solutions; Berestycki--Lions nonlinearity. }
\par
\ \ \ \ {\bf Mathematics Subject Classification:} 35A15, 35B38, 35J10, 35J62.

\section{Introduction and  main results}
Assume $N\geq2$ and consider the following quasilinear Schr\"{o}dinger equation
\begin{equation}\label{eqA1}
-\Delta u+\lambda u-  \Delta(u^2)u=g(u)~~\hbox{in}~~\mathbb{R}^N,
\end{equation}
which is related to the standing waves $\psi(x,t)=u(x)e^{-iEt}$ of the following
Schr\"{o}dinger equation

 \begin{equation}\label{eqA3}
-i\psi_t-\Delta\psi+\mu\psi-h(|\psi|^2)\psi-\Delta(|\psi|^2)\psi=0,
\end{equation}
 where  $i$ denotes the imaginary unit, $\lambda=\mu-E$, $g(t)=h(|t|^2)t$. The quasilinear Schr\"{o}dinger equation \eqref{eqA3} is derived as a model of several physical phenomena. For example, it was used for the superfluid film equation in plasma physics by  Kurihara
\cite{Kurihara1981Large}. It also appears in the theory of
Heisenberg ferromagnetism and magnons (see \cite{Kosevich1990Magnetic} and \cite{MR647411}), in dissipative quantum mechanics and the condensed matter theory \cite{Makhankov1984}.
In literature the study of standing waves of \eqref{eqA1} has been pursed in two main directions, which opened two different challenging research fields.
\par
 The first topic regards the search for solutions of \eqref{eqA1} with a prescribed frequency $\mu$ and free mass, which is called unconstrained problem.
To this aim,  we consider the associated energy functional to \eqref{eqA1}
\begin{equation*}
I(u)={1\over 2}\int_{\mathbb{R}^N}\big(1+2 u^2\big)|\nabla
u|^2dx+\lambda \int_{\mathbb{R}^N}u^2dx-\int_{\mathbb{R}^N}G(u)dx
\end{equation*}
defined on the natural space
\begin{equation*} X=\bigg\{u\in H^1(\mathbb{R}^N)|\int_{\mathbb{R}^N}u^2|\nabla
u|^2dx<\infty\bigg\},\end{equation*}
where $G(u)=\int_0^ug(s)ds.$
A function $u\in X$ is called  a weak solution of \eqref{eqA1} if
$$\lim_{t\rightarrow0}\frac{I(u+t\varphi)-I(u)}{t}=0$$
for all $\varphi \in C_0^\infty(\mathbb{R}^N).$ Recall that $22^\ast=\frac{4N}{N-2}$ corresponds to a critical exponent of \eqref{eqA1} when $g(u)=|u|^{r-2}u$ and $N\geq3$, see \cite{Liu-Wang-Wang2003JDE}. Different from the semilinear case that $\kappa=0$, there is a non-convex term $\Delta(u^2)u$ appears in the quasilinear problem \eqref{eqA1}, and hence the search of solutions of such an equation must face a challenge: the functional
 $$W(u)=\int_{\mathbb{R}^N}u^2|\nabla
u|^2dx$$
associated with the quasilinear term
is non differentiable in the space $X$ when $N\geq2$. Consequently,  the standard critical point theory can not be applied directly. In order to overcome this difficulty, several approaches have been successfully developed in the last decade, such as the constraint minimization (\cite{LiuWang2003}), the Nehari manifold
method (\cite{Liu-Wang-Wang2004CPDE}), the perturbation method (\cite{LiuLiuWang2013}) and the nonsmooth critical point theory (\cite{Canino-Degiovanni1995,Lizhouxin2016AMSS}). It seems that  the first contribution to the  equation  \eqref{eqA1} via variational methods dues to the work \cite{CMP1997}. After this work, a series of subsequent studies have been done concerning with the  existence, multiplicity and  qualitative properties of solutions to \eqref{eqA1}, see for example \cite{Pucci2018DCDS, doO-Miyagaki2010JDE, LZL2022Nonlinearity, LiuLiuWang2014CPDE, LiuWang2014JDE,  Liu-Wang-Guo2012JFA, Liu2013CVPDE, SchmittWang2002CVPDE}.

The second topic of investigation of the problem \eqref{eqA1} consists of prescribing the mass $m>0$, thus conserved by $\psi$ in time
$$\int_{\mathbb{R}^N}\psi(x,t)dx=m,\forall t\in[0,+\infty),$$ and letting the frequency $\mu$ to be free. Such a problem is usually called constrained one.
The constrained problem has a significant relevance in Physics, not only for the quantum probability normalization, but also because the mass may also have specific meanings, such as the power supply in nonlinear optics, or the total number of atoms in Bose-Einstein condensation. Moreover, the investigation of constrained problems can give a better insight of the dynamical properties, as the orbital stability of solutions of \eqref{eqA3}, see for example \cite{CingolaniTanaka2022CVPDE}. For the case $\kappa=0$, \eqref{eqA1} is reduced to the semilinear {S}chr\"{o}dinger equation whose normalized solutions are studied widely in recent years. We can  refer  the interested readers to \cite{CazenaveCMP1982,CingolaniTanaka2022CVPDE, HajaiejANS2004,JeanjeanNonlinearity2019, Shibata143MM} and the references therein.
\par
In recent years, there has been an increasing interest in studying of normalized solutions  to the quasilinear Schr\"{o}dinger equation.
  To obtain normalized solutions, a natural method is to look for the minimizer of the following constrained problem:
\begin{equation*}
e(m):=\inf_{u\in S(m)}E(u),
\end{equation*}
where
\begin{equation*} \label{eq1.5}S(m)=\bigg\{u\in H^1(\mathbb{R}^N):\int_{\mathbb{R}^N}u^2|\nabla
u|^2dx<\infty~\text{and}~\int_{\mathbb{R}^N}|u|^2dx=m\bigg\},\end{equation*}
and
\begin{equation*}
E(u)={1\over 2}\int_{\mathbb{R}^N}\big(1+2u^2\big)|\nabla
u|^2dx-\int_{\mathbb{R}^N}G(u)dx.
\end{equation*}
It is proved in\cite[Theorem 4.6]{JeanjeanNonlinearity2010} that each minimizer of $e(m)$, corresponds to a Lagrange
multiplier $\lambda<0$ such that $(u,\lambda)$ solves weakly $\eqref{eqA1}$ when $g(u)=|u|^{r-2}u$.
Moreover, it is easy to see that  $e(m)>-\infty$ for $2<r<4+\frac{4}{N}$ and  $e(m)=-\infty$ for $4+\frac{4}{N}<r<\frac{4N}{N-2}$.
From this point of view,  $r=4+\frac{4}{N}$ plays a  critical exponent for the constrained problem.
By this constrained  argument, Colin, Jeanjean and Squassina \cite{JeanjeanNonlinearity2010} first prove the following result for $L_2$ -constrained problem (see also \cite{JeanjeanZAMP2013}).
 \begin{theorem}\label{theorem1}
Assume that $g(u)=|u|^{r-2}u$, where $r\in \big(2,\frac{4N}{N-2}\big)$ if $N\geq3$ and $r\in (2,\infty)$ if $N=1,2$. Then
\begin{itemize}
\item[$(1)$] For all $m>0$ and $r\in(2,2+\frac{4}{N})$, $e(m)\in(-\infty,0)$ and $e(m)$ has a minimizer.
\item[$(2)$] For $r\in(2+\frac{4}{N},4+\frac{4}{N})$, there exists $m(r,N)>0$ such that
\subitem(\expandafter{\romannumeral1})If $m\in\big(0,m(r,N)\big), e(m)=0$ and $e(m)$ has  no minimizer;
\subitem(\expandafter{\romannumeral2})If $m=m(r,N), m(c)=0$ and $e(m)$ has  a minimizer;
\subitem(\expandafter{\romannumeral3}) If $m\in\big(c(r,N),+\infty\big), e(m)<0$ and $e(m)$ has  a minimizer;
\item[$(3)$] For all $m>0$, $r\in \big(4+\frac{4}{N},\frac{4N}{N-2}\big)$ if $N\geq3$ and $r\in \big(4+\frac{4}{N},\infty)$ if $N=1,2$, there holds $e(m)=-\infty$.
\item[$(4)$] For $r=4+\frac{4}{N}$,   there exists $m_N>0$ such that
 \begin{equation*}
e(m)=
\begin{cases}
0,~~~~~~\hbox{if}~m\in(0,m_N),\\
-\infty,~~\hbox{if}~m\in(m_N,\infty).
\end{cases}
\end{equation*}
Moreover, $e(m)$ has no minimizer  for all $m>0$.
\item[$(5)$] The standing waves obtained as minimizer of $ e(m)$ are orbitally stable.
\end{itemize}
\end{theorem}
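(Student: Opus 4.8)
The plan is to reduce everything to the behaviour of the $L^2$-preserving dilation $u_\theta(x):=\theta^{N/2}u(\theta x)$, which keeps $u_\theta\in S(m)$ for every $\theta>0$. A direct computation gives
\begin{equation*}
E(u_\theta)=\frac{\theta^2}{2}\int_{\mathbb{R}^N}|\nabla u|^2\,dx+\theta^{N+2}\int_{\mathbb{R}^N}u^2|\nabla u|^2\,dx-\frac{\theta^{\gamma_r}}{r}\int_{\mathbb{R}^N}|u|^r\,dx,\qquad \gamma_r:=\frac{N(r-2)}{2},
\end{equation*}
so the three terms carry the scaling exponents $2$, $N+2$ and $\gamma_r$. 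Since $N\geq2$ forces $2<N+2$, the thresholds $\gamma_r=2$ (that is $r=2+\frac4N$) and $\gamma_r=N+2$ (that is $r=4+\frac4N$) separate the qualitatively different regimes, which is exactly the source of the two critical exponents. The quantitative input is the Gagliardo--Nirenberg inequality applied to $w:=u^2$, for which $\int_{\mathbb{R}^N}u^2|\nabla u|^2\,dx=\frac14\int_{\mathbb{R}^N}|\nabla w|^2\,dx$ and $\|w\|_1=m$; it yields
\begin{equation*}
\int_{\mathbb{R}^N}|u|^r\,dx\leq C_{N,r}\Big(\int_{\mathbb{R}^N}u^2|\nabla u|^2\,dx\Big)^{\frac{N(r-2)}{2(N+2)}}m^{\,\frac r2-\frac{N(r-2)}{N+2}},
\end{equation*}
with $C_{N,r}$ the sharp (attained) constant.

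First I would determine the sign and finiteness of $e(m)$ in each range. For $r\in(2,2+\frac4N)$ one has $\gamma_r<2$, so the inequality above shows that the nonlinear term is of lower order than the kinetic and quasilinear ones; hence $E$ is coercive and bounded below on $S(m)$, giving $e(m)>-\infty$, while $\theta\to0^+$ in the fibre map makes $E(u_\theta)<0$, so $e(m)<0$. This yields the quantitative part of $(1)$. For $r\in(4+\frac4N,\frac{4N}{N-2})$ one has $\gamma_r>N+2$, and $\theta\to+\infty$ drives $E(u_\theta)\to-\infty$, giving $(3)$ immediately. The borderline $r=4+\frac4N$ of $(4)$ is the delicate one: here $\gamma_r=N+2$ and
\begin{equation*}
E(u_\theta)=\frac{\theta^2}{2}\int_{\mathbb{R}^N}|\nabla u|^2\,dx+\theta^{N+2}\Big(\int_{\mathbb{R}^N}u^2|\nabla u|^2\,dx-\frac1r\int_{\mathbb{R}^N}|u|^r\,dx\Big),
\end{equation*}
so the sign of $e(m)$ is decided by whether the bracket can be made negative. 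The sharp inequality reads $\int_{\mathbb{R}^N}|u|^r\,dx\leq 4C_{N,r}\,m^{2/N}\int_{\mathbb{R}^N}u^2|\nabla u|^2\,dx$, so the bracket is nonnegative for every $u$ precisely when $m\leq m_N:=(r/4C_{N,r})^{N/2}$ and can be made negative when $m>m_N$; this produces the alternative $e(m)=0$ versus $e(m)=-\infty$ and the threshold $m_N$. The intermediate range $r\in(2+\frac4N,4+\frac4N)$ of $(2)$, where $2<\gamma_r<N+2$, is treated by minimizing the three-term fibre map $\theta\mapsto E(u_\theta)$ and then optimizing the Gagliardo--Nirenberg bound over $S(m)$: the resulting minimal value is positive for small $m$, vanishes at a threshold $m(r,N)$, and is negative for $m>m(r,N)$, which is the stated trichotomy.

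For the existence of minimizers (the cases $e(m)<0$, and the threshold case $(2)(\mathrm{ii})$ where $e(m)=0$ is attained) I would run the concentration--compactness method of Lions on a minimizing sequence $(u_n)\subset S(m)$. Coercivity, respectively the sharp-constant bound, makes $(u_n)$ bounded in $H^1(\mathbb{R}^N)$ and in the quasilinear norm. Vanishing is excluded because $e(m)<0$ (or $=0$ attained) forces a nontrivial limit profile, and dichotomy is excluded by the strict subadditivity $e(m)<e(\alpha)+e(m-\alpha)$ for all $0<\alpha<m$, obtained from a scaling argument using $e(m)<0$. Hence, up to translation, $u_n\to u$ strongly with $u\in S(m)$ a minimizer, the Lagrange multiplier being $\lambda<0$ as recalled after the definition of $e(m)$. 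By contrast, below the thresholds in $(2)$ and throughout $(4)$ one has $E(u)>0$ for every $u\in S(m)$, so the infimum $0$ is only approached as $\theta\to0^+$ and no minimizer exists; at the threshold $m(r,N)$ in $(2)$, however, a minimizer is recovered by the same concentration--compactness argument since the infimum $0$ is then attained.

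I expect the main obstacle to be the passage to the limit in the quasilinear functional $W(u)=\int_{\mathbb{R}^N}u^2|\nabla u|^2\,dx$: it is only weakly lower semicontinuous, not weakly continuous, and it is not differentiable on $X$, so neither the compactness step nor the Euler--Lagrange identity can be obtained verbatim. I would handle this by combining the pointwise a.e. convergence of $\nabla u_n$ provided by the strong local convergence in concentration--compactness with a Brezis--Lieb type splitting of $W$, and by recovering the multiplier through the change of variables $v=f^{-1}(u)$ with $f'=(1+2f^2)^{-1/2}$, which turns $W$ into a $C^1$ functional. Finally, orbital stability in $(5)$ follows from the Cazenave--Lions argument: the set of minimizers of $e(m)$ is, up to translations, compact and invariant under the mass and energy conserved by \eqref{eqA3}, hence it is a stable set for the associated flow.
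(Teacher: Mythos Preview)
The paper does not contain a proof of this statement. Theorem~\ref{theorem1} is quoted verbatim from Colin, Jeanjean and Squassina \cite{JeanjeanNonlinearity2010} (see also \cite{JeanjeanZAMP2013}) as background for the new results Theorems~\ref{Thm1.10}--\ref{Thm1.6}; there is therefore nothing in the present paper to compare your proposal against.

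That said, your outline is essentially the strategy of the original reference: the $L^2$-preserving fibration $u_\theta(x)=\theta^{N/2}u(\theta x)$, the interpolation inequality obtained by applying Gagliardo--Nirenberg to $w=u^2$ (which is exactly the inequality used in \cite{JeanjeanNonlinearity2010} to identify the thresholds $2+\frac4N$ and $4+\frac4N$), the concentration--compactness alternative with strict subadditivity to produce minimizers when $e(m)<0$, and the Cazenave--Lions stability argument for part~(5). So as a reconstruction of the cited proof your plan is on target.

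Two places deserve more care. First, the threshold case~(2)(ii): you write that ``the infimum $0$ is then attained'' and that concentration--compactness therefore applies, but attainment is precisely what has to be proved; in \cite{JeanjeanNonlinearity2010} this is obtained by taking a sequence $m_j\downarrow m(r,N)$ with minimizers $u_j$ and showing they converge, not by running concentration--compactness directly at level $0$. Second, the strict subadditivity $e(m)<e(\alpha)+e(m-\alpha)$ is less automatic here than in the semilinear case because the three terms in $E$ scale with different powers; the argument in \cite{JeanjeanNonlinearity2010} uses the specific scaling $u(\cdot)\mapsto u(\sigma^{-1/N}\cdot)$ (the same one the present paper invokes in the proof of Theorem~\ref{Thm1.6}) rather than a naive homogeneity, and you should make that explicit.
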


 Interestingly,    by a combination of analytical and numerical arguments in dimension
$N = 3$,  Caliari and Squassina \cite{CaliariJFPTA2012} studied the  explicit bounds on $m(r,N)$ and $m_N$ given in Theorem \ref{theorem1}.  In particular, for $r=\frac{13}{3}$, they find that there exists $\underline{m},~\overline{m}>0$ such that
 \begin{equation*}
 e(m)=
\begin{cases}
0,~~~~~~\hbox{if}~m\in(0,\underline{m}),\\
-\infty,~~\hbox{if}~m\in(\overline{m},\infty).
\end{cases}
\end{equation*}
 Moreover,  $\underline{m} \approx 19.73$ and $\overline{m} \approx 85.09$.
  However, the constrained minimization  argument does not use much of smoothness of the variational functional,  so it is not suitable for dealing with the existence of multiple solutions. Subsequently, Jeanjean, Luo and Wang  \cite{WZQ2015JDE}  prove the existence of two normalized solutions for $L_2$ -subcritical case   via the perturbation approach. More precisely, they proved the following  result.
  \begin{theorem}\label{theorem2}
 Assume that  $N\geq1$ and $g(u)=|u|^{r-2}u$ with $r\in(2+\frac{4}{N},4+\frac{4}{N})$.
 \begin{itemize}
 \item[$(1)$] There exists a $m_0\in(0,m(r,N))$ such that for any $m\in (0,m(r,N))$ the functional $E$ admits a
critical point $v_m$ on $S(m)$ which is a local minimum of $E$ when $m\in (m_0,m(r,N))$ and a global minimum of $E$ when $m\in (m(r,N),+\infty)$. In particular,
 \begin{equation*}
 E(v_m)
\begin{cases}
>0,~~\hbox{if}~m\in(m_0,m(r,N));\\
=0,~~\hbox{if}~m=m(r,N);\\
<0,~~\hbox{if}~m=(m(r,N),+\infty).
\end{cases}
\end{equation*}
\item[$(2)$]  Assuming in addition that $r\in(2+\frac{4}{N},2+\frac{4}{N-2})$ if $N\geq5$ there exists a second critical point $u_m$ on $S(m)$
 which satisfies
\subitem(\expandafter{\romannumeral1}) $E(u_m)>0$ for all $m\in(m_0,+\infty)$ and is a mountain pass level;
\subitem(\expandafter{\romannumeral2})$E(u_m)>E(v_m)$ for any $m\in(m_0,+\infty)$.
\end{itemize}
\end{theorem}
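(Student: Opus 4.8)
\medskip
\noindent\emph{A plan of proof.}\quad
The whole analysis is driven by the $L^2$-preserving dilation $u_\tau(x):=\tau^{N/2}u(\tau x)$, which keeps $u_\tau$ in $S(m)$ and gives
\begin{equation*}
E(u_\tau)=\frac{\tau^2}{2}\int_{\mathbb{R}^N}|\nabla u|^2\,dx+\tau^{N+2}\int_{\mathbb{R}^N}u^2|\nabla u|^2\,dx-\frac{\tau^{N(r-2)/2}}{r}\int_{\mathbb{R}^N}|u|^r\,dx.
\end{equation*}
Since $r\in\big(2+\tfrac4N,4+\tfrac4N\big)$ means exactly $2<\tfrac{N(r-2)}{2}<N+2$, the gradient term dominates as $\tau\to0^+$ (so $E(u_\tau)\to0^+$), the quasilinear term dominates as $\tau\to+\infty$ (so $E(u_\tau)\to+\infty$), and the single negative term lies in between. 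First I would make this quantitative: applying the Gagliardo--Nirenberg inequality to $u^2$, whose gradient is controlled by $\big(\int u^2|\nabla u|^2\big)^{1/2}$ and for which $4+\tfrac4N$ is the critical power, I expect to show that $E$ is coercive and bounded below on $S(m)$ and that, for $m>m_0$, each fibre $\tau\mapsto E(u_\tau)$ develops a local maximum (a barrier) past which sits a local minimum (a valley). The onset of the barrier fixes $m_0$, and the mass at which the valley energy crosses the vanishing level $0$ fixes $m(r,N)$.

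Because the quasilinear term $W(u)=\int_{\mathbb{R}^N}u^2|\nabla u|^2\,dx$ is not differentiable on $H^1(\mathbb{R}^N)$ when $N\ge2$, I would run the variational scheme on the regularised functional $E_\sigma(u)=\frac{\sigma}{p}\int_{\mathbb{R}^N}|\nabla u|^p\,dx+E(u)$ with $p>N$, which is of class $C^1$ on $S(m)\cap W^{1,p}(\mathbb{R}^N)$, solve the constrained problem for each small $\sigma>0$, and let $\sigma\to0^+$ at the end. The first critical point $v_m$ is the valley minimiser: I would minimise $E_\sigma$ over the region of $S(m)$ cut off from the vanishing regime by the barrier; coercivity produces a bounded minimising sequence and the barrier keeps the limit strictly inside, so $v_m$ is a genuine constrained critical point, a local minimiser for $m\in(m_0,m(r,N))$ and, since $e(m)<0$ there, the global minimiser for $m>m(r,N)$, with the stated sign of $E(v_m)$. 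The second critical point $u_m$ is produced by the mountain-pass theorem on $S(m)\cap W^{1,p}$ applied to the pair $\{v_m,(v_m)_\tau\}$ with $\tau$ small, the deep dilation lying in the vanishing regime; every joining path must climb over the barrier, so the min-max level obeys $c_\sigma>E_\sigma(v_m)$, whence $E(u_m)>E(v_m)$ and $E(u_m)>0$ after the limit.

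The core of the argument is compactness on the whole space. Coercivity of $E_\sigma$ makes every Palais--Smale sequence bounded in $W^{1,p}\cap H^1$, so only the escape of mass to infinity must be excluded. Testing the Euler--Lagrange equation yields the Lagrange multiplier, and a Pohozaev identity lets me fix its sign so that the weak limit solves the limiting equation nontrivially; vanishing is ruled out because the levels in play are strictly positive, while dichotomy is ruled out by the strict subadditivity of $m\mapsto e(m)$ that the dilation formula makes available, after recentring the sequences by translations. The restriction $r<2+\tfrac{4}{N-2}=2^\ast$ for $N\ge5$ enters precisely here, keeping the nonlinear term below the Sobolev threshold relevant for the compactness and regularisation arguments in high dimension, so that the Brezis--Lieb splitting closes. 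Finally, bounds uniform in $\sigma$ (guaranteed by the $\sigma$-independent geometry of the first step) allow passing to the limit $\sigma\to0^+$ and recovering $v_m$ and $u_m$ as critical points of the non-smooth functional $E$ on $S(m)$.

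I expect the decisive obstacle to be exactly this compactness-and-limit stage: controlling the sign of the Lagrange multiplier together with the concentration of the sequences, verifying the strict subadditivity that keeps the mountain-pass level below the first splitting threshold, and keeping every estimate uniform in the regularisation parameter $\sigma$ so that both normalized solutions survive once the perturbation is switched off.
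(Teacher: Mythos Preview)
This theorem is not proved in the present paper at all: it is quoted in the introduction as a prior result of Jeanjean, Luo and Wang \cite{WZQ2015JDE}, introduced with the words ``they proved the following result'' and attributed to ``the perturbation approach''. There is therefore no proof in the paper to compare against; the paper merely uses Theorem~\ref{theorem2} as background motivating its own multiplicity results (Theorems~\ref{Thm1.10}--\ref{Thm1.6}).

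That said, your sketch is a faithful reconstruction of the perturbation scheme the paper alludes to. The fibering analysis via the mass-preserving dilation $u_\tau$ and the ordering $2<\tfrac{N(r-2)}{2}<N+2$ are exactly what drive the barrier/valley geometry; the regularisation $E_\sigma=E+\tfrac{\sigma}{p}\int|\nabla u|^p$ with $p>N$ to restore $C^1$-smoothness on $W^{1,p}\cap S(m)$ is precisely the device of \cite{WZQ2015JDE}; and your identification of the compactness-plus-uniform-in-$\sigma$ limit as the delicate step is correct. One small point: your reading of the extra hypothesis $r<2+\tfrac{4}{N-2}=2^\ast$ for $N\ge5$ as a Sobolev subcriticality condition is right in spirit, but note that for $N=4$ the stated range already allows $r>2^\ast$, so the restriction is not simply ``$r<2^\ast$ for compactness''. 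In \cite{WZQ2015JDE} the bound enters more specifically through the a~priori estimates needed to pass to the limit $\sigma\to0$ and to control the Lagrange multiplier; for $N\le4$ different estimates close the argument on the full range $r<4+\tfrac4N$. If you intend to write out the proof in detail you should consult \cite{WZQ2015JDE} directly for that step rather than rely on the heuristic you give.
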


Very recently, Ye and Yu \cite{YY2021JMAA} study the $L_2$-critical problem to \eqref{eqA1} if  $g(u)=|u|^{r-2}r$ for $r=2+\frac{4}{N}$ and $r=4+\frac{4}{N}$. They prove the following result.
\begin{theorem}\label{Thm1.3}
Assume that $g(u)=|u|^{r-2}u$.  Then
\begin{itemize}
\item[$(1)$] For $r=2+\frac{4}{N}$, there exists $m_N>0$ such that
\subitem(\expandafter{\romannumeral1})If $m\in\big(0,m_N\big], e(m)=0$ and $E(u)$ has no critical point on the constraint $S(m)$;
\subitem(\expandafter{\romannumeral2}) If $m\in\big(m_N,+\infty\big), e(m)<0$ and $e(m)$ has  a minimizer;
\end{itemize}
\end{theorem}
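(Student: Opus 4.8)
The plan is to reduce everything to the scaling behaviour of $E$ at the critical exponent. For $u\in S(m)$ set $a=\int_{\mathbb{R}^N}|\nabla u|^2dx$, $b=\int_{\mathbb{R}^N}u^2|\nabla u|^2dx$ and $c=\int_{\mathbb{R}^N}|u|^rdx$, and consider the mass preserving dilation $u_t(x)=t^{N/2}u(tx)\in S(m)$. Since $r=2+\tfrac{4}{N}$ forces $\int_{\mathbb{R}^N}|u_t|^rdx=t^2c$, while $\int_{\mathbb{R}^N}|\nabla u_t|^2dx=t^2a$ and $\int_{\mathbb{R}^N}u_t^2|\nabla u_t|^2dx=t^{N+2}b$, one obtains the exact identity $E(u_t)=\big(\tfrac12a-\tfrac1rc\big)t^2+b\,t^{N+2}$. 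The sharp Gagliardo--Nirenberg inequality at the critical exponent reads $c\le C_{N,r}\,m^{2/N}a$, whence $\tfrac12a-\tfrac1rc\ge\big(\tfrac12-\tfrac{C_{N,r}}{r}m^{2/N}\big)a$; this motivates the threshold $m_N:=\big(\tfrac{r}{2C_{N,r}}\big)^{N/2}$, for which the bracket is nonnegative exactly when $m\le m_N$.

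For $m\le m_N$ the bracket $\tfrac12a-\tfrac1rc\ge0$ for every $u\in S(m)$, so $E(u)=\big(\tfrac12a-\tfrac1rc\big)+b\ge0$, while letting $t\to0^+$ in $E(u_t)$ gives $\inf_{S(m)}E\le0$; thus $e(m)=0$. To exclude critical points I would use that any critical point of $E$ on $S(m)$ must be a stationary point of $t\mapsto E(u_t)$ at $t=1$; equivalently, eliminating the Lagrange multiplier between the Nehari identity $a+4b-c=\lambda m$ and the Pohozaev identity yields $a+(N+2)b=\tfrac2rc$. On the other hand the Gagliardo--Nirenberg bound gives $\tfrac2rc\le a$ for $m\le m_N$, so $(N+2)b\le0$; since $b=\tfrac14\int_{\mathbb{R}^N}|\nabla(u^2)|^2dx=0$ would force $u^2$ constant and hence $u\equiv0$, this is impossible. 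Therefore $E$ has no critical point on $S(m)$.

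For $m>m_N$ the bracket becomes negative for $u$ close to a Gagliardo--Nirenberg extremal, so $E(u_t)<0$ for small $t$ and $e(m)<0$. Boundedness from below is where the quasilinear term is decisive: since $u^2\in H^1(\mathbb{R}^N)$ with $\|\nabla(u^2)\|_2^2=4b$, applying the Sobolev/Gagliardo--Nirenberg inequality to $u^2$ and interpolating between $L^2$ and $L^{4N/(N-2)}$ (for $N=2$, between $L^2$ and a suitable $L^p$) yields $c\le C\,m^{\gamma}\,b^{2/(N+2)}$ with exponent $\tfrac{2}{N+2}<1$. Young's inequality then absorbs the nonlinear term into $b$, giving $E(u)\ge\tfrac14a+\tfrac12b-C'$. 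Hence $e(m)>-\infty$ and every minimizing sequence is bounded in $H^1(\mathbb{R}^N)$ with $\int_{\mathbb{R}^N}u_n^2|\nabla u_n|^2dx$ bounded.

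It remains to produce a minimizer by compactness, and this is the main obstacle. Along a minimizing sequence vanishing is ruled out at once, for $c_n\to0$ would force $\liminf E(u_n)\ge0>e(m)$. The delicate point is to exclude dichotomy, for which I would establish the strict subadditivity $e(m)<e(\alpha)+e(m-\alpha)$ for all $0<\alpha<m$. This I would derive from the scaling inequality $e(\theta\rho)<\theta\,e(\rho)$ for $\theta>1$ and $\rho>m_N$: fixing $w\in S(1)$ and optimising $E(\sqrt{\rho}\,\mu^{N/2}w(\mu\cdot))=P\mu^2+Q\mu^{N+2}$ over $\mu>0$ (here $Q>0$, and $P<0$ once $\rho$ is large) gives an explicit optimal energy, and the superlinear scaling $t^{N+2}$ of the quasilinear term forces the quantitative bound $e(\theta\rho)\le\theta^{1+4/N^2}e(\rho)$, which lies strictly below $\theta\,e(\rho)$ because $e(\rho)<0$; splitting into cases according to whether the sub-masses exceed $m_N$ then yields the subadditivity. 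With vanishing and dichotomy excluded, the concentration--compactness principle provides, after translation, a weak limit $u$ with $\|u\|_2^2=m$; weak lower semicontinuity of $a$ and of $\|\nabla(u^2)\|_2^2$ together with strong $L^r$ convergence give $E(u)\le e(m)$, so $u$ is the desired minimizer. The two places demanding genuine care are the justification of the Pohozaev identity and of the Brezis--Lieb type splitting for the nondifferentiable term $\int_{\mathbb{R}^N}u^2|\nabla u|^2dx$.
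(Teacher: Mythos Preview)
This theorem is not proved in the present paper at all: it is quoted verbatim as a result of Ye and Yu \cite{YY2021JMAA} and serves only as background. There is therefore no ``paper's own proof'' to compare against, and your proposal should be read as an independent attempt at a known result.

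Your analysis of part~(1) is correct and essentially the standard one: the scaling $u_t(x)=t^{N/2}u(tx)$ together with the sharp Gagliardo--Nirenberg inequality at $r=2+\tfrac4N$ gives $e(m)=0$ for $m\le m_N$, and the identity $a+(N+2)b=\tfrac2rc$ obtained from $\partial_tE(u_t)|_{t=1}=0$ rules out critical points (at $m=m_N$ one also needs that a Gagliardo--Nirenberg extremal has $b>0$, which you should make explicit). For part~(2), the boundedness argument via Gagliardo--Nirenberg applied to $u^2$ is exactly right and reproduces the exponent $\tfrac{2}{N+2}<1$; this is the same interpolation that appears in \eqref{eqaa1} of the present paper.

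The genuine gap is in your subadditivity step. Your scaling $\sqrt{\rho}\,\mu^{N/2}w(\mu\cdot)$ with a \emph{fixed} $w\in S(1)$ only furnishes an upper bound for $e(\rho)$; optimising $P\mu^2+Q\mu^{N+2}$ cannot yield a relation of the form $e(\theta\rho)\le\theta^{1+4/N^2}e(\rho)$ between the true infima, because neither side is the quantity you have computed. A cleaner route is the dilation $v=u(\theta^{-1/N}\cdot)$, which maps $S(\rho)$ to $S(\theta\rho)$ and gives
\[
E(v)=\theta^{(N-2)/N}\Big(\tfrac{a}{2}+b\Big)-\frac{\theta}{r}c=\theta E(u)+\big(\theta^{(N-2)/N}-\theta\big)\Big(\tfrac{a}{2}+b\Big).
\]
Along any minimising sequence with $E(u_n)\to e(\rho)<0$ one checks (using your own bound $c\le C m^{\gamma}b^{2/(N+2)}$) that $\tfrac{a_n}{2}+b_n$ is bounded away from $0$, which gives the strict inequality $e(\theta\rho)<\theta e(\rho)$ for $\theta>1$. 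From this the full subadditivity $e(m)<e(\alpha)+e(m-\alpha)$ follows by the usual case splitting according to whether the pieces exceed $m_N$.
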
\par
\begin{itemize}
\item[$(2)$] For $r=4+\frac{4}{N}$, there exists $m_N>0$ such that
\subitem(\expandafter{\romannumeral1}) $E(u)$ has  no critical point on the constraint $S(m)$ for all $m\in\big(0,m_N\big]$;
\subitem(\expandafter{\romannumeral2}) $E(u)$ has a critical point on the constraint $S(m)$ for all  $m\in\big(m_N,+\infty\big)$ when $N\leq3$.
\end{itemize}
\par
We also point out that  some works focus on the existence and  asymptotic behavior of  normalized solutions related to some quasilinear elliptic equations with  potential function, that is, $\mu$ is replaced by $V(x)$  satisfying different assumptions in \eqref{eqA1}, we can  refer  the interested readers to \cite{Phys.D2019,Zou2023PJM,WZQ2023,ZZ2018ANS,ZZ2019DCDS} and the references therein for this case.

 However, to the best of our knowledge, the existing results in references of the constrained problem \eqref{eqA1} mainly focus on the case where $g$ is a power function. It seems that there is no work considering the existence of normalized solutions for \eqref{eqA1} when $g$ is a general nonlinearity. A natural question is whether the nonlinearity $g(u)=|u|^{r-2}u$ can be relaxed to a general nonlinearity $g$ or not? On the other hand, it seems that the multiplicity of normalized solutions for \eqref{eqA1} only can be found in Jeanjean, Luo and Wang  \cite{WZQ2015JDE}, where the authors partly obtained the existence of two normalized solutions for \eqref{eqA1} when  $g(u)=|u|^{r-2}u$. Since such a power nonlinearity is odd, another  natural question is whether \eqref{eqA1} has infinitely many normalized solutions or not? Moreover, we note from Theorem \ref{theorem2} that  $r\in \big(2+\frac{4}{N},4+\frac{4}{N}\big)$ for $N\leq4$ or $r\in \big(2+\frac{4}{N},2+\frac{4}{N-2})$ for $r\geq5$ are required. However, it is well-known that the existence  of  solutions of \eqref{eqA1} can be obtained  for  the unconstrained problem when $r\in \big(2,\frac{4N}{N-2}\big)$ and $N\geq1$. So the last natural questions is whether the scope of $r$ and the dimension of the space can be relaxed to obtain multiple normalized solutions of \eqref{eqA1}?
\par
Motivated by the above results especially by \cite{JeanjeanZAMP2013} and  \cite{WZQ2015JDE}, in the present paper, we are concerned with the existence and multiplicity of normalized solutions of the following quasilinear {S}chr\"{o}dinger  equation
\begin{equation*}
\begin{cases}
-\Delta u+ \mu u-\Delta(u^2)u=g(u)~~\hbox{in}~~\mathbb{R}^N,\\
\int_{\mathbb{R}^N}|u|^2dx=m,~~~~~~~~~~~~~~~~~~~~~~~~~~~~~~~~~~~~~~~~~~~~~~~~~~~~~~~~~~~~~~~~~~~~~~~~~~~~~(P_{\mu,m})\\
u\in H^1(\mathbb{R}^N),
\end{cases}
\end{equation*}
where $g$ satisfies the well-known Berestycki--Lions condition, which is regarded as an almost optimal assumption of the existence of solutions for nonlinear Schr\"{o}dinger equation (see \cite{BL19831,BL19832}).  To state our main results, we need the following assumptions.
 \begin{itemize}
\item[$(g_1)$] $g\in C(\mathbb{R},\mathbb{R})$.
\item[$(g_2)$] $\lim\limits_{s\rightarrow0}\frac{g(s)}{s}=0$.
\item[$(g_3)$] For $p=4+\frac{4}{N}$, $\lim\limits_{s\rightarrow\infty}\frac{|g(s)|}{|s|^{p-1}}=0$.
\item[$(g_4)$] There exists some $s_0>0$ such that $G(s_0)>0$.
\item[$(g_5)$] $g(-s)=-g(s)$ for all $s\in \mathbb{R}$.
\item[$(g_6)$] For $q=2+\frac{4}{N}$,  $\liminf\limits_{s\rightarrow0}\frac{|g(s)|}{|s|^{q-2}s}=\infty$.
\item[$(g_7)$] $g(s)s\geq0$ for all $s\in\mathbb{R}$.
\end{itemize}

\begin{Def} For any $m > 0$, a solution $u\in H^1(\mathbb{R}^N)$ to \eqref{eqA1} is called a least energy normalized solution, if $u\in S(m)$ and
$$E(u) = \min\{E(u) : u \in S(m),\mu\in\mathbb{R} ~\text{and} ~(u,\mu)~ \text{solves} ~(P_{\mu,m})\}.$$\end{Def}
\par
Now, our main results of this paper can be stated as follows.
 \begin{theorem}\label{Thm1.10}
Assume that $(g_1)-(g_4)$ hold  and $N\geq2$. Then
 \begin{itemize}
 \item[$(1)$]  There exists $m_1\geq0$  such that $(P_{\mu,m})$ has at least a ground state normalized  solution for  $m>m_1$.
\item[$(2)$] In addition that $(g_6)$,  $(P_{\mu,m})$ has at least a ground state normalized  solution  for any $m>0$.
\end{itemize}
\end{theorem}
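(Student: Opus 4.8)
The plan is to obtain the ground state as a global minimizer of the energy on the mass constraint, i.e.\ to study
$$e(m):=\inf_{u\in S(m)}E(u),$$
and to show that, in the stated regimes, this infimum is negative and attained. To bypass the non-differentiability of $W(u)=\int_{\mathbb{R}^N}u^2|\nabla u|^2\,dx$, I would first perform the standard change of variables $u=f(v)$, where $f$ solves $f'(t)=(1+2f^2(t))^{-1/2}$ with $f(0)=0$. A direct computation gives $(1+2f^2(v))(f'(v))^2\equiv1$, so that
$$E(f(v))=\frac12\int_{\mathbb{R}^N}|\nabla v|^2\,dx-\int_{\mathbb{R}^N}G(f(v))\,dx=:J(v),$$
while the constraint becomes $\int_{\mathbb{R}^N}f(v)^2\,dx=m$. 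The functional $J$ is of class $C^1$ on $H^1(\mathbb{R}^N)$, and the set $\mathcal{M}(m):=\{v\in H^1(\mathbb{R}^N):\int_{\mathbb{R}^N}f(v)^2\,dx=m\}$ is a smooth manifold for $m>0$ (its defining functional has nonvanishing derivative off the origin, since $f'>0$). Hence any minimizer $v$ of $J$ on $\mathcal{M}(m)$ yields, via the Lagrange multiplier rule and the correspondence $u=f(v)$, a pair $(u,\mu)$ solving $(P_{\mu,m})$; and since $E(u)=J(v)=e(m)\le E(w)$ for every normalized solution $w\in S(m)$, such a $u$ is automatically a ground state.

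Next I would establish the two scalar facts $-\infty<e(m)<0$. For the lower bound, hypotheses $(g_2)$ and $(g_3)$ give, for every $\eta>0$, an estimate $G(s)\le\eta|s|^{p}+C_\eta s^2$ with $p=4+\frac4N$; combined with the Gagliardo--Nirenberg inequality applied to $w=u^2$ (which yields $\|u\|_p^{p}\le C\,m^{2/N}\,W(u)$, linear in $W$ precisely because $p=4+\frac4N$ is the critical exponent for this problem), this produces
$$E(u)\ge\frac12\|\nabla u\|_2^2+\big(1-\eta C m^{2/N}\big)W(u)-C_\eta m.$$
Choosing $\eta$ small shows that $E$ is bounded below and coercive on $S(m)$, so $e(m)>-\infty$ and every minimizing sequence is bounded in $H^1(\mathbb{R}^N)\cap X$. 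For the upper bound I would use the mass-preserving scalings $u_t(x)=t^{N/2}u(tx)$: under $(g_4)$ a profile of height near $s_0$ spread over a large ball makes $E$ negative once the mass is large, which gives part $(1)$ with a threshold $m_1\ge0$ (defined as the infimum of masses for which $e(m)<0$); under the additional hypothesis $(g_6)$, the mass-supercritical behaviour of $g$ at the origin ($G(s)\gtrsim|s|^{q}$ near $0$ with arbitrarily large constant, $q=2+\frac4N$) makes the nonlinear term dominate as $t\to0$ for a fixed nonnegative profile, yielding $e(m)<0$ for \emph{every} $m>0$ and hence part $(2)$.

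Finally, I would recover compactness of a minimizing sequence $\{u_n\}\subset S(m)$ (equivalently $\{v_n\}\subset\mathcal{M}(m)$) through Lions' concentration--compactness principle. Vanishing is excluded because it would force $\int_{\mathbb{R}^N}G(u_n)\,dx\to0$ and thus $\liminf E(u_n)\ge0$, contradicting $e(m)<0$; dichotomy is excluded by the strict subadditivity $e(m)<e(m_1)+e(m-m_1)$ for all $0<m_1<m$. Establishing this strict subadditivity is the main obstacle: the quasilinear term $W$ and the kinetic term $\|\nabla u\|_2^2$ carry different scaling weights, and $g$ is a general (non-homogeneous) nonlinearity, so the usual homogeneity identity $e(\theta m)=\theta^{\gamma}e(m)$ is unavailable. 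I would therefore argue in the transformed variable, exploiting that $f(v)^2\sim v^2$ for small $v$ and $f(v)^2\sim|v|$ for large $v$, to compare $e(m)$ with $e(m_1)+e(m-m_1)$ by a suboptimal recombination of near-minimizers together with $e(m)<0$. Once compactness holds, the (possibly translated) weak limit lies in $S(m)$, attains $e(m)$ by the weak lower semicontinuity of $E$, and the correspondence above delivers the ground state normalized solution and its Lagrange multiplier $\mu$.
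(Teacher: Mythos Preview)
Your proposal follows a different route from the paper and contains a genuine gap.

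The paper does \emph{not} prove Theorem~\ref{Thm1.10} by constrained minimization. Instead it works with the augmented functional
\[
J^m(\lambda,v)=\tfrac12\|\nabla v\|_2^2+\tfrac{e^\lambda}{2}\big(\|f(v)\|_2^2-m\big)-\int_{\mathbb{R}^N}G[f(v)],
\]
treats the Lagrange multiplier $e^\lambda$ as an additional unknown, and constructs a mountain-pass value $b_1^m$ in $\mathbb{R}\times H^1_r(\mathbb{R}^N)$. A Palais--Smale--Pohozaev compactness condition (valid precisely at negative levels) together with a deformation argument shows $b_1^m$ is a critical value whenever $m>m_1:=2\inf_{\lambda<\lambda_0}a_1(\lambda)/e^\lambda$; under $(g_6)$ one has $m_1=0$. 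The least-energy property is then proved by contradiction via the Pohozaev identity. No subadditivity, no concentration--compactness, and no symmetrization enter this proof.

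Your gap is the strict subadditivity $e(m)<e(\alpha)+e(m-\alpha)$. You acknowledge it as ``the main obstacle'' and offer only a heuristic (``suboptimal recombination of near-minimizers together with $e(m)<0$''). For the quasilinear problem this step is genuinely problematic: the two kinetic terms $\|\nabla u\|_2^2$ and $W(u)$ scale with different exponents, the nonlinearity is non-homogeneous, and the functional is not $C^1$ on the natural space $X$. The paper itself, in the remark following Theorem~\ref{Thm1.6}, states explicitly that for the quasilinear case ``it is not clear this subadditivity inequality holds \ldots\ So this problem still remains open.'' Without it you cannot exclude dichotomy, and the argument does not close.

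It is also telling that when the paper \emph{does} prove that $e(m)$ is attained (Theorem~\ref{Thm1.6}), it needs the extra hypotheses $(g_5)$ and $(g_7)$: these allow Schwarz symmetrization of a minimizing sequence so that one can work in $H^1_r$, use compact embeddings, and then conclude full mass by a scaling trick---thereby avoiding subadditivity altogether. Under only $(g_1)$--$(g_4)$ you have neither symmetrization (no sign condition on $G$) nor subadditivity at your disposal, so your minimization route does not reach Theorem~\ref{Thm1.10} as stated.
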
\par
\begin{theorem}\label{Thm1.1}
Assume that $(g_1)-(g_5)$ hold  and $N\geq2$. Then
 \begin{itemize}
 \item[$(1)$]  For any $k\in \mathbb{N}$, there exists $m_k\geq0$  such that $(P_{\mu,m})$ has at least $k$ radial solutions for $m>m_k$.
\item[$(2)$] In addition that $(g_6)$,  $(P_{\mu,m})$ has infinitely many radial solutions  for any  $m>0$.
\end{itemize}
\end{theorem}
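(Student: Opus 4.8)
The plan is to eliminate the non-differentiability of $E$ by the standard dual change of variables and then run a $\mathbb{Z}_2$-equivariant minimax governed by the Krasnoselskii genus on the transformed constraint manifold, working throughout in the radial subspace in order to recover compactness (this is where $N\ge 2$ enters). First I would introduce $f\colon\mathbb{R}\to\mathbb{R}$ with $f'(t)=(1+2f(t)^2)^{-1/2}$, $f(0)=0$; it is odd, so the transformed objects inherit the symmetry $(g_5)$, and it satisfies $|f(t)|\le|t|$ together with $f(t)\sim c\,\mathrm{sgn}(t)\sqrt{|t|}$ at infinity. Setting $u=f(v)$ one has $\int_{\mathbb{R}^N}(1+2u^2)|\nabla u|^2=\int_{\mathbb{R}^N}|\nabla v|^2$, so that $E(f(v))=J(v):=\tfrac12\|\nabla v\|_2^2-\int_{\mathbb{R}^N}G(f(v))$, which is of class $C^1$ on $H^1(\mathbb{R}^N)$ by $(g_1)$–$(g_3)$ and even by $(g_5)$. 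The mass constraint becomes $\int_{\mathbb{R}^N}f(v)^2=m$ and cuts out a symmetric $C^1$ manifold $\mathcal{M}(m)$; a constrained critical point $v$ with multiplier $\mu$ yields a solution $(f(v),\mu)$ of $(P_{\mu,m})$, and by the principle of symmetric criticality it suffices to find such points in $H^1_r(\mathbb{R}^N)$.

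Next I would show that $J$ is coercive and bounded below on $\mathcal{M}(m)$. The point is that $(g_3)$ is a little-$o$ condition at the quasilinear mass-critical exponent $p=4+\tfrac4N$: writing the nonlinear term in terms of $u=f(v)$, the Gagliardo--Nirenberg estimate for $\int_{\mathbb{R}^N}|u|^p$ lands exactly on the $L^2$-critical borderline, schematically $\int_{\mathbb{R}^N}|u|^p\le K\,m^{2/N}\|\nabla v\|_2^2$, but the little-$o$ lets its constant be taken arbitrarily small, whence $J(v)\ge\tfrac14\|\nabla v\|_2^2-C(m)$. Consequently $\|\nabla v\|_2$ is bounded on sublevel sets. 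I would then establish the Palais--Smale condition for $J|_{\mathcal{M}(m)}$ at every negative level: a Palais--Smale sequence gives $u_n=f(v_n)$ bounded in $H^1_r$, hence convergent in $L^s$ for $2<s<2^*$ and a.e., so the nonlinear term passes to the limit; a negative level rules out the vanishing limit $u=0$, and one upgrades the $L^s$-convergence to preservation of the full $L^2$-mass, so the weak limit stays on $\mathcal{M}(m)$ and is a genuine constrained critical point.

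With compactness available below $0$, the multiplicity follows from a genus minimax. Set $c_k(m)=\inf_{A\in\Gamma_k}\sup_{v\in A}J(v)$, where $\Gamma_k$ is the family of compact symmetric subsets $A\subset\mathcal{M}(m)$ with $\gamma(A)\ge k$. The equivariant deformation lemma together with the Palais--Smale condition below $0$ shows that each $k$ with $c_k(m)<0$ furnishes a critical value, and that coincidences among the $c_k$ force the corresponding critical set to carry genus; hence the number of radial solutions is at least $\#\{k:c_k(m)<0\}$. Everything thus reduces to exhibiting, for as many $k$ as desired, a symmetric set of genus $\ge k$ on which $J<0$. For this I would build $k$-parameter families of radial test functions, for instance nested rescaled bumps on which $u=f(v)\approx s_0$, and rescale them onto $\mathcal{M}(m)$; the positivity $G(s_0)>0$ from $(g_4)$ makes the potential term dominate the gradient term once there is sufficient mass, which produces $\sup_A J<0$ precisely for $m>m_k$ and gives part $(1)$. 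Under the additional hypothesis $(g_6)$ — growth faster than $|s|^{q-2}s$ at the origin, with $q=2+\tfrac4N$ the semilinear mass-critical exponent — the same construction can be run with small-amplitude bumps, so $\sup_A J<0$ holds for every genus and every $m>0$, yielding infinitely many radial solutions and thus part $(2)$.

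The hard part is the compactness analysis on $\mathcal{M}(m)$: verifying the Palais--Smale condition strictly below $0$, that is, simultaneously ruling out loss of $L^2$-mass in the weak limit and keeping the Lagrange multiplier $\mu$ under control. Radial symmetry defeats translation-type losses and the little-$o$ growth $(g_3)$ defeats concentration, but the borderline $L^2$-scaling is exactly what makes $0$ the natural compactness threshold and, in turn, forces the mass thresholds $m_k$ in the absence of $(g_6)$. The second delicate point, which separates the two parts of the theorem, is the quantitative genus construction guaranteeing negative high-dimensional sublevels.
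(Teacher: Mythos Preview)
Your route is genuinely different from the paper's, and the step you flag as ``the hard part'' is exactly where the argument, as written, does not go through.

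The paper never works on the constraint manifold $\mathcal M(m)=\{v:\int f(v)^2=m\}$. Following Hirata--Tanaka, it promotes the multiplier to a free variable and studies
\[
J^m(\lambda,v)=\tfrac12\|\nabla v\|_2^2+\tfrac{e^\lambda}{2}\big(\|f(v)\|_2^2-m\big)-\int_{\mathbb R^N}G(f(v))
\]
on the product $\mathbb R\times H^1_r(\mathbb R^N)$. Compactness is obtained through a \emph{Palais--Smale--Pohozaev} condition: besides $J^m\to c$ and $DJ^m\to 0$, one imposes $P(\lambda_n,v_n)\to 0$ for the Pohozaev functional. That extra relation is precisely what pins down the multiplier (from $\tfrac N2 m e^{\lambda_n}\ge P-NJ^m$ one gets $\lambda_n$ bounded below, and the constraint then bounds it above), while $\partial_\lambda J^m=0$ delivers the mass a posteriori. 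The deformation compatible with this extra condition comes from a second augmented functional $\mathcal F(\theta,\lambda,v)=J^m(\lambda,v(e^{-\theta}\cdot))$. The minimax values are built from multidimensional odd paths landing outside a Pohozaev ``mountain'' $\partial\Omega$, and their negativity for $m>m_k$ is proved by comparison with the unconstrained symmetric mountain-pass levels $a_k(\lambda)$ of $J_\lambda$, yielding the explicit threshold $m_k=2\inf_{\lambda<\lambda_0} a_k(\lambda)e^{-\lambda}$; under $(g_6)$ one shows $m_k=0$.

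In your scheme the compactness claim is unsupported at two points. First, the strong $H^1$-convergence argument for a constrained PS sequence hinges on the monotone term
\[
-2\mu_n\int_{\mathbb R^N}\big(f(v_n)f'(v_n)-f(v)f'(v)\big)(v_n-v)\ge 0,
\]
which requires the Lagrange multiplier to have the correct sign (equivalently $\mu=e^\lambda>0$ in the paper's convention). For a \emph{solution} at negative energy this sign follows from Pohozaev, but for a mere PS sequence on $\mathcal M(m)$ you have no Pohozaev information, and under Berestycki--Lions hypotheses alone (no Ambrosetti--Rabinowitz, no monotonicity) there is no way to read the sign of $\mu_n$ from $J(v_n)\to c<0$; this is exactly why the paper builds the Pohozaev condition into its PS notion. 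Second, the assertion that a negative level lets one ``upgrade the $L^s$-convergence to preservation of the full $L^2$-mass'' is not a consequence of radial compactness: $H^1_r\hookrightarrow L^s$ is compact only for $s\in(2,2^\ast)$, and ruling out dichotomy would need a strict subadditivity inequality for the constrained levels, which the paper explicitly records as open in the quasilinear setting. Without one of these two ingredients the PS condition on $\mathcal M(m)$ is not established and the genus deformation cannot be run.
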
\par
We immediately conclude the following result from Theorem \ref{Thm1.1}:
\begin{Cor}\label{Cor1}
  Assume that  $g(u)=|u|^{r-2}u$  hold and $N\geq2$.
\begin{itemize}
 \item[$(1)$] $(P_{\mu,m})$ has infinitely many radial solutions  for any $2<r<2+\frac{4}{N}$ and $m>0$.
 \item[$(2)$] For any $k\in \mathbb{N}$, there exists $m_k>0$  such that $(P_{\mu,m})$ has at least $k$ radial solutions for $2+\frac{4}{N}\leq r<4+\frac{4}{N}$ when $m>m_k$.
\end{itemize}
\end{Cor}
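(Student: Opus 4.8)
The plan is to derive the corollary directly from Theorem \ref{Thm1.1} by verifying that the pure power nonlinearity $g(u)=|u|^{r-2}u$ satisfies the Berestycki--Lions assumptions $(g_1)$--$(g_6)$ in the relevant ranges of $r$. Since both parts of Theorem \ref{Thm1.1} are already established, the whole task reduces to a checking of hypotheses, so I expect no genuine obstacle; the only point needing care is matching the exponent range in each hypothesis to the two cases $2<r<2+\frac{4}{N}$ and $2+\frac{4}{N}\le r<4+\frac{4}{N}$, and in particular tracking where $(g_6)$ survives.

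First I would record the assumptions that hold throughout the admissible range $2<r<4+\frac{4}{N}$. Continuity $(g_1)$ is immediate. For $(g_2)$ one computes $\frac{g(s)}{s}=|s|^{r-2}\to 0$ as $s\to 0$, which uses $r>2$. For $(g_3)$, with $p=4+\frac{4}{N}$ one has $\frac{|g(s)|}{|s|^{p-1}}=|s|^{r-p}\to 0$ as $s\to\infty$, which uses $r<p$. For $(g_4)$ the primitive is $G(s)=\frac{|s|^{r}}{r}>0$ for every $s\neq 0$, so any $s_0>0$ works. Finally $(g_5)$ holds because $g(-s)=|-s|^{r-2}(-s)=-|s|^{r-2}s=-g(s)$.

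For part (1), when $2<r<2+\frac{4}{N}$, it remains to verify $(g_6)$. With $q=2+\frac{4}{N}$ one computes, for $s>0$, $\frac{|g(s)|}{|s|^{q-2}s}=s^{r-q}\to\infty$ as $s\to 0^+$ precisely because $r<q=2+\frac{4}{N}$, and the even symmetry of this quotient handles the two-sided limit. Hence all of $(g_1)$--$(g_6)$ hold, and Theorem \ref{Thm1.1}(2) yields infinitely many radial solutions of $(P_{\mu,m})$ for every $m>0$.

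For part (2), when $2+\frac{4}{N}\le r<4+\frac{4}{N}$, only $(g_1)$--$(g_5)$ are needed, and these were already verified above for the whole range $2<r<4+\frac{4}{N}$. Note that $(g_6)$ may now fail, since $r\ge q$ forces $s^{r-q}$ to stay bounded (or vanish) as $s\to 0^+$; this is exactly why one cannot upgrade to the stronger conclusion of part (1) in this regime. Applying Theorem \ref{Thm1.1}(1) then provides, for each $k\in\mathbb{N}$, a threshold $m_k\ge 0$ such that $(P_{\mu,m})$ admits at least $k$ radial solutions whenever $m>m_k$, which gives the stated claim with $m_k>0$ and completes the proof.
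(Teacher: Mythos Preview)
Your proposal is correct and follows exactly the route the paper intends: the corollary is stated immediately after Theorem \ref{Thm1.1} with no separate proof, so the argument is precisely to check that $g(u)=|u|^{r-2}u$ satisfies $(g_1)$--$(g_5)$ for all $2<r<4+\tfrac{4}{N}$ and additionally $(g_6)$ when $r<2+\tfrac{4}{N}$, then invoke the two parts of Theorem \ref{Thm1.1}. The only cosmetic point is the passage from $m_k\ge 0$ (as in Theorem \ref{Thm1.1}(1)) to the stated $m_k>0$, which is harmless since enlarging the threshold only weakens the conclusion; alternatively, Lemma \ref{LemA2}(2) gives $m_k>0$ directly in the range $r\ge 2+\tfrac{4}{N}$.
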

\begin{remark} If $e(m)<0$ for  $r\in \big(2,4+\frac{4}{N})$,  by Theorem \ref{theorem1} and Theorem \ref{theorem2}, we have  $e(m)=b_1^m$ and $m_1=m(r,N)$, where $b_1^m$ is given in definition 3.1 below.  In other words, the minimizer of $E|_{S(m)}$  can be characterized by mountain pass level.  Clearly, Corollary \ref{Cor1} improves Theorems \ref{theorem1}--\ref{theorem2}.
 \end{remark}

\begin{theorem}\label{Thm1.6}
Assume that  $(g_1)-(g_5)$, $(g_7)$ hold  and $N\geq2.$ Then
 \begin{itemize}
 \item[$(1)$]   $e(m)$ has  a minimizer  for  $m>m_1$, where $m_1$ is given in  Theorem \ref{Thm1.10}.
\item[$(2)$] In addition that $(g_6)$,    $e(m)$ has  a minimizer for any $m>0$.
\end{itemize}
\end{theorem}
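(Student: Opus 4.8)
The plan is to realize $e(m)$ as a constrained minimum by the direct method, the hypotheses being arranged so that the infimum is strictly negative precisely in the stated ranges of $m$. First I would verify that $e(m)>-\infty$ for every $m>0$. Combining $(g_2)$ and $(g_3)$ yields, for each $\varepsilon>0$, a pointwise bound $|G(s)|\le\varepsilon|s|^2+\varepsilon|s|^p+C_\varepsilon|s|^r$ with $p=4+\frac4N$ and a fixed $r\in(2,p)$; the $L^2$-critical quasilinear Gagliardo--Nirenberg inequality $\|u\|_p^p\le C_N\|u\|_2^{4/N}\int_{\mathbb{R}^N}u^2|\nabla u|^2\,dx$ then lets me absorb the critical term into $\int_{\mathbb{R}^N}u^2|\nabla u|^2\,dx$ once $\varepsilon$ is small, while the subcritical term $\|u\|_r^r$ is dominated by the gradient terms through Young's inequality. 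This shows $E$ is bounded below on $S(m)$ and, more importantly, that $\|\nabla u_n\|_2$ and $\int_{\mathbb{R}^N}u_n^2|\nabla u_n|^2\,dx$ remain bounded along any minimizing sequence.

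The second step fixes the sign of $e(m)$, where parts $(1)$ and $(2)$ part ways. Testing with a mass-preserving dilation $u_t(x)=t^{N/2}u(tx)$ gives $E(u_t)=\frac{t^2}{2}\|\nabla u\|_2^2+t^{N+2}\int_{\mathbb{R}^N}u^2|\nabla u|^2\,dx-\int_{\mathbb{R}^N}G(u_t)\,dx$; using $(g_4)$ one can choose a profile and a scale making $E(u_t)<0$ provided the mass is large, the relevant threshold being the constant $m_1$ supplied by Theorem \ref{Thm1.10}, so that $e(m)<0$ for $m>m_1$. Under the extra hypothesis $(g_6)$, the growth of $g$ near the origin exceeds the $L^2$-critical power $q=2+\frac4N$ (for which $\int_{\mathbb{R}^N}|u|^q\,dx$ scales like $\|\nabla u\|_2^2$ under the same dilation), forcing $-\int_{\mathbb{R}^N}G(u_t)\,dx$ to dominate $\frac{t^2}{2}\|\nabla u\|_2^2$ for small $t$ and arbitrarily small mass; hence $e(m)<0$ for every $m>0$ and $m_1=0$.

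Granting $e(m)<0$, I would run a concentration--compactness argument on a minimizing sequence. Since $g$ is odd by $(g_5)$, its primitive $G$ is even, so Schwarz symmetrization preserves $\int_{\mathbb{R}^N}G(u)\,dx$ and $\|u\|_2^2$ while not increasing $\|\nabla u\|_2^2$, nor (by the Polya--Szego inequality applied to $\nabla(u^2)=2u\nabla u$) the quasilinear term $\int_{\mathbb{R}^N}u^2|\nabla u|^2\,dx$; I may therefore assume the minimizing sequence is radial and radially decreasing. Boundedness gives $u_n\rightharpoonup u$ in $H^1(\mathbb{R}^N)$ with $u_n^2$ bounded in $H^1_{\mathrm{rad}}(\mathbb{R}^N)$, and the compact radial embedding in the subcritical range (which contains $p$, since $p<2\cdot2^\ast$) yields $\int_{\mathbb{R}^N}G(u_n)\,dx\to\int_{\mathbb{R}^N}G(u)\,dx$. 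Strict negativity then excludes vanishing: if $u=0$ we would get $\int_{\mathbb{R}^N}G(u_n)\,dx\to0$, and since $(g_7)$ makes $G\ge0$ this forces $\liminf E(u_n)\ge0$, contradicting $e(m)<0$.

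The crux, and the step I expect to be hardest, is excluding loss of mass at infinity, that is proving $\|u\|_2^2=m$ rather than some $m'\in(0,m)$. Here I would establish the strict subadditivity $e(m)<e(m')+e(m-m')$ for $0<m'<m$, which rests on $(g_7)$ (rendering the problem genuinely focusing, with $e$ strictly decreasing) together with the scaling of the quasilinear term; a Brezis--Lieb splitting of $E$ along $u_n=u+w_n$ then forces $e(m)=e(m')+e(m-m')$, and strict subadditivity rules out $m'<m$. With $m'=m$, weak lower semicontinuity of the gradient terms gives $E(u)\le e(m)$ and $u\in S(m)$, whence $E(u)=e(m)$ and $u$ is the sought minimizer. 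The genuine difficulties are that $\int_{\mathbb{R}^N}u^2|\nabla u|^2\,dx$ is only lower semicontinuous and non-differentiable, and that $p$ sits exactly at the $L^2$-critical threshold, which makes both the Brezis--Lieb decomposition and the strict subadditivity delicate; once these are secured, $u$ solves $(P_{\mu,m})$ with a Lagrange multiplier and is a least-energy normalized solution, and comparison with the minimax level of Theorem \ref{Thm1.10} yields the identification $e(m)=b_1^m$ recorded in the remark.
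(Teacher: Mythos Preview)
Your overall architecture (symmetrize, bound, pass to a weak limit) matches the paper's, but two of your key steps diverge from it, and one of them is a real gap.

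\textbf{On $e(m)<0$.} Your mass-preserving dilation argument does not single out the particular threshold $m_1$; that constant is defined through the minimax levels $a_1(\lambda)$, not through any scaling of a test function. The paper instead feeds the minimax theory back in: for $m>m_1$, Lemma~\ref{LemA12} produces a critical point $(\lambda_m,v_m)\in K^m_{b_1^m}$ with $b_1^m<0$; setting $u_m=f(v_m)$ one has $u_m\in S(m)$ and $E(u_m)=J^m(\lambda_m,v_m)=b_1^m<0$, so $e(m)\le b_1^m<0$. Under $(g_6)$ the same machinery gives $m_1=0$. Thus the strict negativity of $e(m)$ is a \emph{corollary} of Theorem~\ref{Thm1.10}, not an independent computation, and this is exactly why the hypothesis $m>m_1$ appears.

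\textbf{On excluding mass loss.} This is the gap. You plan to prove strict subadditivity $e(m)<e(m')+e(m-m')$ and then run a Brezis--Lieb splitting; but the remark immediately following Theorem~\ref{Thm1.6} records that this very subadditivity inequality is \emph{open} for the quasilinear problem---the non-differentiability of $u\mapsto\int u^2|\nabla u|^2$ obstructs the Shibata argument. The paper bypasses subadditivity entirely with a one-line rescaling. After symmetrization and compactness give a weak limit $u\neq0$ with $E(u)\le\liminf E(u_n)=e(m)<0$, suppose $\|u\|_2^2=c\in(0,m)$. Put $\sigma=m/c>1$ and $v(x)=u(\sigma^{-1/N}x)$, so $v\in S(m)$; the combined gradient term scales by $\sigma^{1-2/N}\le\sigma$ (here $N\ge2$) while $\int G(u)$ scales by $\sigma$, hence
\[
E(v)\;\le\;\sigma\,E(u)\;<\;E(u)\;\le\;e(m),
\]
contradicting $e(m)\le E(v)$. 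No concentration--compactness dichotomy, no Brezis--Lieb decomposition, and no subadditivity are needed---only the $L^r$-compactness of the radial sequence for $2<r<2\cdot 2^\ast$ (to pass $\int G(u_n)$ to the limit) and weak lower semicontinuity of the gradient terms. Replace your subadditivity step with this rescaling and the sketch closes.
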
\par

\begin{remark}
It is a natural question that if $e(m)$ has  a minimizer  for  $m\in (0,m_1]$. For semilinear case, by proving a strict subadditivity inequality as follows
$$e_{m_1+m_2}<e_{m_1}+e_{m_2},$$
Shibata \cite{Shibata2014ManuscriptaMath} gives a  negative answer. For quasilinear case, however, it is not clear this subadditivity inequality holds, because the functional $E$  is non differentiable in the space $X$ when $N\geq2$.  So this problem still remains open.
 \end{remark}

To prove Theorems \ref{Thm1.10}--\ref{Thm1.1}, we employ a new approach introduced by Hirata and Tanaka  \cite{Tanaka2019ANS}.  One advantage of this method is that it can be suitably
adapted to derive multiplicity results of normalized solutions by using minimax and deformation arguments.  Firstly, we consider  a  augmented  functional $I^m(\lambda,u)$,
which is  not well defined in the normal Sobolev space $\mathbb{R}\times H^1(\mathbb{R}^N)$. Consequently, one of the  difficulties to deal with \eqref{eqA1}  stems from the fact that there is no suitable working space where the energy functional enjoys  both the smoothness and compactness properties.   In order to overcome this difficulty,
inspired by \cite{Liu-Wang-Wang2003JDE}, we make a change of variables and transform the quasilinear problem into a semilinear one.

 Since  $g$ only satisfies the Berestycki--Lions condition,  it impossible for us to verify the boundedness
 of $(PS)$ or $(C_c)$ sequence because of the absence of Ambrosetti-Rabinowize condition  and monotonicity condition.  Inspired by \cite{Tanaka2019ANS}, we use a new deformation argument under a new version of the Palias--Samle condition related to a  Pohozaev functional. Compared with the  unconstrained  case, we find that the corresponding functional  seems to  satisfies the compactness condition only at the negative energy level but not at positive energy level (see Lemma \ref{LemA7}). Consequently, we have to construct a family of negative minimax values $b^k_m$ to prove Theorems \ref{Thm1.10}--\ref{Thm1.1}. To this aim, a key problem is prove that none of $b^k_m$ is equal to $-\infty$ and hence $b^k_m$ is well defined.
 In \cite{Tanaka2019ANS},  Hirata and Tanaka  accomplished  it  by comparing $b^k_m$ and the least  energy of the following classic problem
\begin{equation}\label{eq666}-\Delta u+u=|u|^{r-2}u,~~x\in\mathbb{R}^N,\end{equation} where $2<r<2^\ast.$  This argument heavily relies on  scaling properties of functional. However, it seems to fail for the  quasilinear case because of the nonhomogeneous  properties of  change of variables and nonlinearity. In this paper, we will  borrow from a Pohozaev Mountain developed in \cite{CingolaniTanaka2022CVPDE} to overcome this difficulty. We point out that  a new  property of change of variables  play a important role in proof, see Lemma \ref{LemA1}--(14).
\par
 To look for a minimizer of $e(m)$, our work uses in particular some arguments of Steiner rearrangement (see \cite{Liebbook2001,Lions1984}) and a new scaling of \cite{JeanjeanNonlinearity2010}. We will see that a key point is show that $e(m)<0$. Indeed, for the homogeneous case that $g(u)=|u|^{r-2}u$, one can achieve this  by using a scaling argument. However,
for the nonhomogeneous case, it is not easy to see whether $e(m)<0$ holds or not. In this paper, by using minimax method,  we first find  a least energy normalized  solution with negative energy  to $(P_{\mu,m})$, which concludes that $e(m)<0$. From this perspective, we gives a strategy for finding a minimizer for constraint problems.
\par
The paper is organized as follows. In Section 2, we will focus on  the variational frame and  deformation lemma. In Section 3, we will  construct the
negative minimax level and and analyze their behavior. In Section 4, we will devote to the proofs of main results.

Notation. For the sake of notational simplicity, we omit integral symbols $dx$ without causing confusion.
  Throughout this paper,  $\rightarrow$ and $\rightharpoonup$ denote
the strong convergence and the weak convergence, respectively. $\|\cdot\|$ denotes the norm in $H^1(\mathbb{R}^N)$. $\|(\lambda,u)\|:=\sqrt{\lambda^2+\|u\|^2}$ denotes the norm of  $(\lambda,u)$ in $\mathbb{R}\times H^1(\mathbb{R}^N)$. For $A\subset \mathbb{R}\times H^1(\mathbb{R}^N)$ and $\rho>0$, $N_\rho(A)=\{(\lambda,u):\hbox{dist}((\lambda,u),A)<\rho\}$.
$\|\cdot\|_p$ denotes the norm in  $L^p(\mathbb{R}^N)$ for $1\leq p\leq\infty$.
$\partial D_k=\{x\in \mathbb{R}^k:|x|=1\},~~D_k=\{x\in \mathbb{R}^k:|x|\leq 1\}. $  $\Sigma_\rho =\big\{v\in H^1(\mathbb{R}^N):\int_{\mathbb{R}^N }\big(|\nabla
v|^2+f^2(v)\big)\leq\rho\big\}$
and $\partial\Sigma_\rho =\big\{v\in H^1(\mathbb{R}^N):\int_{\mathbb{R}^N }\big(|\nabla
v|^2+f^2(v)\big)=\rho\big\}.$ $C_0$,  $C$, $C_i$ denote various positive constants whose value may change from line to line but are not essential to the analysis of the proof.
\par
\section{Palais-Smale-Pohozaev condition and  deformation theory}
\subsection{Palais-Smale-Pohozaev condition}
For the sake of notational simplicity, we denote  $\mu=e^\lambda$ . Since we are looking for normalized solutions  to equation $(P_{\mu,m})$, we consider the following energy functional:
\begin{equation*}
I^m(\lambda,u)={1\over 2}\int_{\mathbb{R}^N}\big(1+2u^2\big)|\nabla
u|^2+{ e^\lambda \over2}\bigg(\int_{\mathbb{R}^N}u^2-m\bigg)
-\int_{\mathbb{R}^N}G(u),
\end{equation*}
which is  not well defined in the normal Sobolev space $\mathbb{R}\times H^1(\mathbb{R}^N)$.  In order to overcome this difficulty,
inspired by \cite{Liu-Wang-Wang2003JDE}, we make a change of variables $v=f^{-1}(u)$, where $f$ is defined by
\begin{equation*}
\label {eqD1}
\begin{cases}
f(0)=0,\\
f^\prime(t)=\big(1+2 |f(t)|^2\big)^{-\frac{1}{2}},~~~\forall~t>0,\\
f(t)=-f(-t),~~~~~~~~~~~~~~~~\forall~t<0.
\end{cases}
\end{equation*}
\begin{lemma}\label{LemA1}
The function $f$ satisfies the following properties:
\begin{itemize}
\item[$(1)$] $f\in C^\infty(\mathbb{R},\mathbb{R})$ is uniquely defined  function and invertible.

\item[$(2)$] $0<f^{\prime}(t)\leq 1$ for all $t\in \mathbb{R} $.

\item[$(3)$] $|f(t)|\leq |t|$ for all $t\in \mathbb{R} $.

\item[$(4)$] $\lim\limits_{t\rightarrow0}\frac{f(t)}{t}=1.$

\item[$(5)$] $\lim\limits_{t\rightarrow\infty}\frac{|f(t)|^2}{|t|}=\sqrt{2}.$
\item[$(6)$] $\frac{1}{2}f(t)\leq t f^{\prime}(t)\leq f(t)$ for all $t\geq 0$.

\item[$(7)$] $|f(t)|\leq 2^{\frac{1}{4}}|t|^{\frac{1}{2}}$ for all $t\in \mathbb{R} $.

\item[$(8)$] There exists a positive constant $C>0$ such that
\begin{equation*}
f(t)\geq
\begin{cases}
C |t|, ~~|t|\leq 1,\\
C|t|^{\frac{1}{2}}, |t|> 1.
\end{cases}
\end{equation*}

\item[$(9)$] $|f(t)|f^{\prime}(t)\leq \frac{1}{\sqrt{2}}$ for all $t\in \mathbb{R} $.

\item[$(10)$]  For all $\theta>0$, there is a constant $C(\theta)>0$ such that $|f(\theta t)|^2\leq C(\theta)|f(t)|^2$ for all $t\in \mathbb{R} $.

\item[$(11)$] The function $f(t)f^{\prime}(t)t^{-1}$ is decreasing for
$t>0$.

\item[$(12)$] The function $f^r(t)f^{\prime}(t)t^{-1}$ is increasing for
$r\geq 3$ and $t>0$.

\item[$(13)$]
There exists $C>0$ such that $
|t|^r\leq C|f(t)|^2+C|f(t)|^{2r}$ for all $r\geq2$ and $t \in \mathbb{R}$.

\item[$(14)$]  There exists a positive constant $C>0$ such that  $f^2(rt)\geq Crf^2(t)$  for all $t\in \mathbb{R} $ and $r\geq1$; $f^2(rt)\geq Cr^2f^2(t)$  for all $t\in \mathbb{R} $ and $r\leq1$.
\item[$(15)$]  Assume  $(g_1)$--$(g_3)$  hold. For any $\varepsilon>0$,  there exists $C_\varepsilon>0$ such that  $
|G[f(t)]|\leq \varepsilon|f(t)|^2+C_\varepsilon|t|^{\frac{p}{2}}$ for all $t \in \mathbb{R}$.
 \item[$(16)$] Assume  $(g_1)$,  $(g_3)$ and $(g_6)$ hold. Let $r\in(q,+\infty)$ if $N=2$, and $r\in(q,2^\ast)$ if $N\geq 3$, then for any $L>0$ there exists $C_L>0$ such that
$G[f(t)]\geq L |t|^{q}-C_L|t|^{r}~~\hbox{for}~~\hbox{all}~~t\in \mathbb{R}.$
\end{itemize}
\end{lemma}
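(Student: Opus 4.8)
The plan is to establish all sixteen properties starting from the defining relation $f'(t)=(1+2f^2(t))^{-1/2}$ on $t>0$ together with oddness, ordering the work so that the delicate estimates rest on the elementary ones. Items $(1)$--$(4)$ are immediate: since the map $y\mapsto(1+2y^2)^{-1/2}$ is smooth, bounded by $1$, and has bounded derivative (hence globally Lipschitz), the Cauchy problem has a unique global $C^\infty$ solution, which is strictly increasing (hence invertible) because $f'>0$, and the limit in $(4)$ is just $f'(0)=1$. The algebraic bounds $(2),(3),(7),(9)$ follow by inserting the formula for $f'$ into elementary inequalities: from $1+2f^2\ge1$ one reads off $0<f'\le1$ and then $(3)$ by integrating $|f'|\le1$; from $1+2f^2\ge 2f^2$ one gets $ff'=f(1+2f^2)^{-1/2}\le 2^{-1/2}$, which is $(9)$, and integrating $\tfrac{d}{dt}f^2=2ff'\le\sqrt2$ yields $f^2\le\sqrt2\,t$, i.e.\ $(7)$.

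For the asymptotics $(5)$ I would use the inverse relation $t=\int_0^{f(t)}\sqrt{1+2s^2}\,ds$ (obtained by the substitution $s=f(\tau)$), whose integrand is asymptotic to $\sqrt2\,s$, so $t\sim \tfrac{1}{\sqrt2}f^2(t)$ as $t\to\infty$. The structural inequality $(6)$ is the backbone of the rest. Its upper bound $tf'\le f$ is the concavity identity $f(t)-tf'(t)=\int_0^t(f'(s)-f'(t))\,ds\ge0$; for the lower bound $\tfrac12 f\le tf'$ I would set $\phi(f)=\int_0^f\sqrt{1+2s^2}\,ds-\tfrac12 f\sqrt{1+2f^2}$, note $\phi(0)=0$, and compute $\phi'(f)=\tfrac12(1+2f^2)^{-1/2}>0$, which gives exactly $t\ge\tfrac12 f\sqrt{1+2f^2}=\tfrac12 f/f'$. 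The monotonicity claims $(11),(12)$ then reduce to sign checks: using the clean identities $f''=-2ff'^4$ and $(ff')'=f'^4$, one finds that $\tfrac{d}{dt}(f^rf'/t)$ has the sign of $tf'(r-2f^2f'^2)-f$, which is negative for $r=1$ and positive for $r\ge3$ after inserting $\tfrac12 f\le tf'\le f$ and $2f^2f'^2<1$.

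The emphasized property $(14)$ I would obtain directly from $(6)$: for fixed $t>0$ the function $r\mapsto f^2(rt)/r$ has derivative of the sign of $2rtf'(rt)-f(rt)\ge0$ by the lower bound in $(6)$, so it is nondecreasing and $f^2(rt)\ge r f^2(t)$ for $r\ge1$; likewise $r\mapsto f^2(rt)/r^2$ has derivative of the sign of $rtf'(rt)-f(rt)\le0$ by the upper bound in $(6)$, giving $f^2(rt)\ge r^2 f^2(t)$ for $r\le1$. The homogeneity-type bound $(10)$ then follows from $(14)$ together with monotonicity, and the two-sided comparison $(8)$ follows by combining $(7)$ with the differential inequality $f'/f\ge 1/(2t)$ from $(6)$, integrated from $1$, to get $f(t)\ge f(1)t^{1/2}$ for $t\ge1$, while concavity gives the linear lower bound on $[0,1]$. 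Finally, $(13),(15),(16)$ are translations between $t$ and $f(t)$: $(13)$ uses $\sqrt{1+2s^2}\le 1+\sqrt2\,s$ to get $|t|\le C(|f|+|f|^2)$ followed by case analysis in $|f|\lessgtr 1$; $(15)$ combines the Berestycki--Lions bound $|G(s)|\le \varepsilon|s|^2+C_\varepsilon|s|^p$ coming from $(g_1)$--$(g_3)$ with $|f|^p\le C|t|^{p/2}$ from $(7)$; and $(16)$ uses $(g_6)$ to bound $G(s)$ below by $L|s|^q$ near the origin, pushed through $(8)$ for small $t$, while the large-$|t|$ region is handled by absorbing the subcritical growth from $(g_3)$ into the term $-C_L|t|^r$, using the exponent ordering $r>q>p/2$.

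I expect the main obstacles to be the lower bound in $(6)$ and property $(16)$. The former is the only place where a genuine auxiliary-function argument is required rather than a one-line estimate, and every subsequent monotonicity and growth statement leans on it; the latter demands a careful split into a neighborhood of the origin, where the singular lower bound from $(g_6)$ is essential, and its complement, where one must check that the subcritical term is dominated by $-C_L|t|^r$ uniformly, which is exactly where $r>q$ and the ordering $q>p/2$ are used.
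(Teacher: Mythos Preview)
Your proposal is correct, and for item $(16)$---the only part the paper actually argues in detail---your approach coincides with theirs: split into a neighborhood of the origin (where $(g_6)$ and property $(8)$ give $G[f(t)]\ge L|t|^q$), a region at infinity (where $(g_3)$ together with $(5)$/$(7)$ and the ordering $r>q>p/2$ lets the subcritical growth be absorbed into $-C_L|t|^r$), and an intermediate compact region handled by continuity. The paper dispatches $(1)$--$(13)$ by citation and calls $(14)$--$(15)$ ``simple'' without proof, so your write-up is in fact more complete; in particular your monotonicity argument for $(14)$, differentiating $r\mapsto f^2(rt)/r$ and $r\mapsto f^2(rt)/r^2$ and reading off the sign from the two halves of $(6)$, is cleaner than anything the paper supplies and actually delivers the constant $C=1$.
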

\par
\begin{proof}
$(1)$-$(13)$ can be found in \cite{Wu2014JDE}. Here we only give the proof for $(16)$ because $(14)$-$(15)$ is simple.
For any $L>0$, by $(g_6)$ and $(8)$,  there exists $0<\delta=\delta(L)<1$ such that
 $$G[f(t)]>\frac{L}{C}\bigg|f(t)\bigg|^{q}\geq L|t|^{q}$$
 for any $0<|t|\leq \delta$.
From $(g_3)$ , $(5)$ and $r>q>\frac{p}{2}$, we have
 $$\lim_{t\rightarrow\infty}\frac{G[f(t)]}{-L|t|^{q}+|t|^{r}}\geq\lim_{t\rightarrow\infty}\frac{-|f(t)|^{p}}{-L|t|^{q}+|t|^{r}}
=\lim_{t\rightarrow\infty}\frac{-|t|^{\frac{p}{2}}}{-L|t|^{q}+|t|^{r}}=0.$$
Then there exists $M=M(L)>0$ such that $$G[f(t)]>L|t|^{q}-|t|^{r}$$ for any $|t|> M$.
For $\delta\leq|t|\leq M$, by $(g_1)$,  there exists $C(L)>0$  such that $$G[f(t)]\geq-C(L)\geq-C(L)\delta^{-r}|t|^r.$$
Consequently, for $t\in \mathbb{R}$, $$G[f(t)]\geq L|t|^{q}-\big(1+C(L)\delta^{-r}\big)|t|^{r}:= L|t|^{q}-C_L|t|^{r}.$$
\end{proof}

After making the change of variables, we consider the functional
\begin{equation*}
J^m(\lambda, v)={1\over 2}\|\nabla
v\|_2^2+{ e^\lambda \over2}\big(\|f(v)\|^2_2-m\big)-\int_{\mathbb{R}^N} G[f(v)]
\end{equation*}
and the corresponding semilinear problem
\begin{equation*}
-\Delta v +e^\lambda\big(f(v)f^\prime(v)-m\big)=g[f(v)]f^\prime(v).
\end{equation*}
Let us recall the principle of symmetric criticality (see  \cite{Willem1996book}) as follows.
 \begin{lemma}\label{LemC1}
Assume that the action of the topological group $G$ on the Hilbert space $X$ is isometric. If $\varphi\in C^1(X,\mathbb{R})$ is invariant and if $u$ is a critical point of $\varphi$ restricted to $Fix(G)$, then $u$ is a critical point of $\varphi$.
\end{lemma}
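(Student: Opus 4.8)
The plan is to show directly that the Fréchet derivative $\varphi'(u)$ vanishes on all of $X$, not merely on the fixed-point subspace $Fix(G)=\{w\in X: g\cdot w=w \text{ for all } g\in G\}$. Writing the action as $g\cdot w = T_g w$, the hypothesis that $G$ acts isometrically means each $T_g$ is a linear isometry of $X$; since $X$ is a Hilbert space this forces $T_g$ to be unitary, so that $T_g^{*}=T_g^{-1}$. Identifying $\varphi'(u)\in X^{*}$ with the gradient $\nabla\varphi(u)\in X$ via the Riesz representation, the goal becomes $\nabla\varphi(u)=0$.

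First I would establish the equivariance of the gradient. Invariance of $\varphi$ means $\varphi\circ T_g=\varphi$ for every $g\in G$; differentiating and using linearity of $T_g$ in the chain rule gives $\langle \nabla\varphi(T_g w), T_g v\rangle = \langle \nabla\varphi(w), v\rangle$ for all $v,w\in X$. Passing the isometry to its adjoint yields $\langle T_g^{*}\nabla\varphi(T_g w), v\rangle = \langle \nabla\varphi(w), v\rangle$ for every $v$, and since $T_g^{*}=T_g^{-1}$ this rewrites as $\nabla\varphi(T_g w)=T_g\,\nabla\varphi(w)$; that is, the gradient is $G$-equivariant.

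Next I would specialize to $w=u\in Fix(G)$. Then $T_g u=u$ for every $g$, so the equivariance identity collapses to $\nabla\varphi(u)=T_g\,\nabla\varphi(u)$ for all $g\in G$, which says precisely that $\nabla\varphi(u)\in Fix(G)$. On the other hand, $Fix(G)$ is a closed linear subspace of $X$, being the intersection of the kernels of the continuous linear maps $I-T_g$; hence its tangent space at $u$ is $Fix(G)$ itself, and the hypothesis that $u$ is a critical point of $\varphi|_{Fix(G)}$ reads $\langle \nabla\varphi(u), w\rangle=0$ for every $w\in Fix(G)$. Choosing $w=\nabla\varphi(u)$, which is legitimate exactly because we have just shown $\nabla\varphi(u)\in Fix(G)$, gives $\|\nabla\varphi(u)\|^{2}=0$, whence $\nabla\varphi(u)=0$ and $u$ is a critical point of $\varphi$ on all of $X$.

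This is the classical argument of Palais, so there is no genuine analytic difficulty; the one step that must be handled with care is the passage from invariance of $\varphi$ to equivariance of $\nabla\varphi$, and this is precisely where the isometry hypothesis is indispensable, since it is what makes $T_g^{*}=T_g^{-1}$ and allows the adjoint produced by differentiation to be reabsorbed. Without this hypothesis the fixed-point set need not interact well with the gradient, and the conclusion can fail.
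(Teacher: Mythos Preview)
Your argument is the standard Palais proof and is correct. The paper does not actually prove this lemma; it simply records the statement and cites Willem's book, so there is no in-paper proof to compare against---you have supplied precisely the classical details that the authors take for granted.
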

\begin{lemma}\label{LemCC1} Assume that $(g_1)-(g_4)$ hold. Then $J^m \in C^1(\mathbb{R}\times H^1(\mathbb{R}^N),\mathbb{R})$. Moreover, if $(\lambda,v)$ is a critical point for $J^m|_{\mathbb{R}\times H_r^1(\mathbb{R}^N)}$, then $(e^\lambda,f(v))$ solves equation $(P_{\mu,m})$. \end{lemma}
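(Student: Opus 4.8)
The plan is to establish the lemma in three stages: first prove the $C^1$ regularity of $J^m$ on $\mathbb{R}\times H^1(\mathbb{R}^N)$ and identify its differential; then invoke the principle of symmetric criticality to pass from the radial constraint to the whole space and read off the Euler--Lagrange equations; finally undo the change of variables to recover a genuine weak solution of $(P_{\mu,m})$.

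For the regularity I would split $J^m$ into three pieces. The gradient part $\tfrac12\|\nabla v\|_2^2$ is a continuous quadratic form, hence smooth, with differential $\varphi\mapsto\int\nabla v\cdot\nabla\varphi$. For the constraint part, the Nemytskii operator attached to $t\mapsto f^2(t)$ has derivative $2f(t)f'(t)$, and Lemma \ref{LemA1}(2)--(3) gives $|2f(t)f'(t)|\le 2|t|$, i.e.\ linear growth; thus $v\mapsto\int f^2(v)$ is $C^1$ on $H^1(\mathbb{R}^N)$ with differential $\varphi\mapsto\int 2f(v)f'(v)\varphi$, and multiplying by the smooth factor $e^\lambda$ yields joint $C^1$ dependence on $(\lambda,v)$. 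For the nonlinear part I would check that $\tilde G(t):=G[f(t)]$ has standard subcritical growth: Lemma \ref{LemA1}(15) gives $|\tilde G(t)|\le\varepsilon f^2(t)+C_\varepsilon|t|^{p/2}$ with $p/2=2+\tfrac2N\le 2^\ast$, while combining $(g_2)$--$(g_3)$ (which yield $|g(s)|\le\varepsilon|s|+C_\varepsilon|s|^{p-1}$) with Lemma \ref{LemA1}(3),(7),(9) gives $|\tilde G'(t)|=|g[f(t)]|f'(t)\le\varepsilon|t|+C_\varepsilon|t|^{1+2/N}$, where $1+\tfrac2N\le 2^\ast-1$. Hence by the classical theory of Nemytskii operators together with the Sobolev embedding $H^1(\mathbb{R}^N)\hookrightarrow L^s(\mathbb{R}^N)$ for $2\le s\le 2^\ast$, the map $v\mapsto\int G[f(v)]$ is $C^1$ with differential $\varphi\mapsto\int g[f(v)]f'(v)\varphi$. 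Collecting the three pieces gives $J^m\in C^1(\mathbb{R}\times H^1(\mathbb{R}^N),\mathbb{R})$ with $\partial_\lambda J^m(\lambda,v)=\tfrac{e^\lambda}{2}(\|f(v)\|_2^2-m)$ and $\partial_v J^m(\lambda,v)[\varphi]=\int\nabla v\cdot\nabla\varphi+e^\lambda\int f(v)f'(v)\varphi-\int g[f(v)]f'(v)\varphi$.

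For the second stage, note that $O(N)$ acts isometrically on $H^1(\mathbb{R}^N)$ (and trivially on the $\mathbb{R}$-factor), that $J^m$ is invariant under this action, and that $\mathbb{R}\times H_r^1(\mathbb{R}^N)$ is precisely the fixed-point set. Lemma \ref{LemC1} then promotes any critical point of $J^m|_{\mathbb{R}\times H_r^1}$ to a critical point of $J^m$ on all of $\mathbb{R}\times H^1$. Setting $\partial_\lambda J^m(\lambda,v)=0$ and using $e^\lambda>0$ gives the mass constraint $\int f^2(v)=m$, while $\partial_v J^m(\lambda,v)=0$ is exactly the weak form $\int\nabla v\cdot\nabla\varphi+e^\lambda\int f(v)f'(v)\varphi=\int g[f(v)]f'(v)\varphi$ of the semilinear equation, valid for all $\varphi\in H^1(\mathbb{R}^N)$.

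For the final stage, put $u:=f(v)$. Then $u\in H^1(\mathbb{R}^N)$ since $|u|\le|v|$ by Lemma \ref{LemA1}(3) and $\nabla u=f'(v)\nabla v$ with $0<f'(v)\le1$; moreover $u\in X$ because $\int u^2|\nabla u|^2=\int (f(v)f'(v))^2|\nabla v|^2\le\tfrac12\int|\nabla v|^2<\infty$ by Lemma \ref{LemA1}(9), and the constraint reads $\int u^2=m$, so $u\in S(m)$. To obtain the quasilinear equation I would test the semilinear weak form against $\varphi=\phi/f'(v)=\sqrt{1+2u^2}\,\phi$ for $\phi\in C_0^\infty(\mathbb{R}^N)$. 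Using the identities $f''(v)=-2f(v)f'(v)^4$, $\nabla u=f'(v)\nabla v$ and $\nabla v=\sqrt{1+2u^2}\,\nabla u$, one has $\nabla\varphi=\sqrt{1+2u^2}\,\nabla\phi+2\phi f(v)f'(v)\nabla u$, and a direct computation collapses the gradient term into $\int(1+2u^2)\nabla u\cdot\nabla\phi+2\int u|\nabla u|^2\phi$ and the lower-order terms into $e^\lambda\int u\phi-\int g(u)\phi$; this is precisely the weak form of $-\Delta u+e^\lambda u-\Delta(u^2)u=g(u)$. Hence $(e^\lambda,f(v))$ solves $(P_{\mu,m})$ with $\mu=e^\lambda$. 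The main obstacle I anticipate is exactly this last step: one must justify that $\varphi=\sqrt{1+2u^2}\,\phi$ is an admissible test function in $H^1(\mathbb{R}^N)$, which follows from the formula for $\nabla\varphi$ above together with $|f(v)f'(v)|\le 1/\sqrt2$, the compact support of $\phi$, and the local integrability of $u^2=f^2(v)\le\sqrt2|v|$ on $\mathrm{supp}\,\phi$; a density argument then extends the identity to all admissible test functions. The remaining estimates in the regularity step are routine once the growth bounds of Lemma \ref{LemA1} are in hand.
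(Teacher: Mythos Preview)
Your proposal is correct and follows essentially the same strategy as the paper: establish $C^1$ regularity of $J^m$ via the growth bounds of Lemma~\ref{LemA1}, invoke the principle of symmetric criticality (Lemma~\ref{LemC1}) to pass from $H_r^1$ to $H^1$, and then undo the change of variables by testing the semilinear weak form with $\varphi=\psi/f'(v)=\sqrt{1+2u^2}\,\psi$ for $\psi\in C_0^\infty(\mathbb{R}^N)$. Your treatment is in fact more careful than the paper's on one point: the paper writes the weak form only for $\varphi\in C_0^\infty(\mathbb{R}^N)$ and then substitutes $\varphi=\psi/f'(v)$, which is not smooth in general, whereas you correctly justify that this $\varphi$ lies in $H^1(\mathbb{R}^N)$ (compact support, $u^2\in L^1_{\mathrm{loc}}$, and $|f(v)f'(v)|\le 1/\sqrt{2}$) so that it is an admissible test function for the $C^1$ differential.
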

 \begin{proof}
 $J^m \in C^1(\mathbb{R}\times H^1(\mathbb{R}^N),\mathbb{R})$ directly follows from $(g_1)-(g_4)$ and Lemma  \ref{LemA1}. Suppose that $(\lambda,v)$ is a critical point for $J^m|_{\mathbb{R}\times H_r^1(\mathbb{R}^N)}$, by Lemma \ref{LemC1}, $(\lambda,v)$ is also a critical point for  $J^m$. Consequently,
\begin{align*}
0=\langle \partial_v J^m(\lambda,v), \varphi\rangle&=\int_{\mathbb{R}^N}\nabla v \nabla \varphi+ e^\lambda\int_{\mathbb{R}^N}f(v)f^\prime(v)\varphi
-\int_{\mathbb{R}^N}g[f(v)]f^\prime(v)\varphi,~~~~~~~~\forall \varphi \in C_0^\infty(\mathbb{R}^N)
\end{align*}
and
\begin{align*}
 0=\partial_\lambda J^m(\lambda,v)={ e^\lambda \over2}\big(\|f(v)\|_2^2-m\big).
\end{align*}
Let  $u=f(v)$ and $\varphi=\frac{\psi}{f^\prime(v)}$. Then
\begin{align*}
\int_{\mathbb{R}^N}(1+2u^2)\nabla u \nabla\psi +\int_{\mathbb{R}^N}2|\nabla u|^2u\psi + e^\lambda\int_{\mathbb{R}^N}u\psi
-\int_{\mathbb{R}^N}g(u)\psi =0,~~~~~~~~\forall \psi \in C_0^\infty(\mathbb{R}^N)
\end{align*}
and
\begin{align*}
{ e^\lambda \over2}\bigg(\|u\|_2^2-m\bigg)=0.
\end{align*}
Consequently,  $(\mu,u)$ solves equation $(P_{\mu,m})$ with $\mu=e^\lambda$.
\end{proof}

Now, we introduce the Pohozaev functional as follows:
\begin{equation*}
P(\lambda,v)={N-2\over 2}\|\nabla
v\|_2^2+{ Ne^\lambda  \over2}\|f(v)\|_2^2-N\int_{\mathbb{R}^N}
G[f(v)].
\end{equation*}
For $c\in \mathbb{R}$, let
$$K^m_c=\{(\lambda,v)\in\mathbb{R}\times H_r^1(\mathbb{R}^N):J^m(\lambda,v)=c,~\partial_\lambda J^m(\lambda,v)=0,~\partial_v J^m(\lambda,v)=0,~P(\lambda,v)=0\}.$$

\begin{Def}
For $c\in \mathbb{R}$, we call $J^m(\lambda,v)$ satisfies $(PSP)_c$ condition, if  any sequence  $\{(\lambda_n,v_n)\}\subset \mathbb{R}\times H_r^1(\mathbb{R}^N)$ with
\begin{equation}\label{eqB8}J^m(\lambda_n,v_n)\rightarrow c,\end{equation}
\begin{equation}\label{eqB9}\partial_\lambda J^m(\lambda_n,v_n)\rightarrow 0,\end{equation}
\begin{equation}\label{eqB10}\partial_v J^m(\lambda_n,v_n)\rightarrow 0~~\hbox{in}~~H_r^1(\mathbb{R}^N)^\ast,\end{equation}
\begin{equation}\label{eqB11}P(\lambda_n,v_n)\rightarrow0\end{equation}

as $n\rightarrow\infty$, then $\{(\lambda_n,v_n)\}$ has a strongly convergent subsequence in $\mathbb{R}\times H_r^1(\mathbb{R}^N)$.
\end{Def}
\begin{lemma}\label{LemA7} Assume that $(g_1)-(g_4)$ hold. Then $J^m(\lambda,v)$ satisfies $(PSP)_c$ condition for all $c<0$.
\end{lemma}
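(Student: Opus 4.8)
The plan is to establish the two ingredients of compactness along any sequence $\{(\lambda_n,v_n)\}\subset\mathbb{R}\times H_r^1(\mathbb{R}^N)$ satisfying \eqref{eqB8}--\eqref{eqB11}: first boundedness, then strong convergence of a subsequence. I would not use the four conditions individually but through two well-chosen combinations. Subtracting $\frac1N P$ from $J^m$ cancels the nonlinearity and gives the exact identity
\[
J^m(\lambda_n,v_n)-\tfrac1N P(\lambda_n,v_n)=\tfrac1N\|\nabla v_n\|_2^2-\tfrac{e^{\lambda_n}}2 m,
\]
so by \eqref{eqB8} and \eqref{eqB11} the right-hand side tends to $c$; since $c<0$, this forces $\frac{e^{\lambda_n}}2 m\ge -c+o(1)>0$, i.e.\ $e^{\lambda_n}$ is bounded away from zero, and then \eqref{eqB9} yields $\|f(v_n)\|_2^2\to m$. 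This is precisely where the hypothesis $c<0$ enters, and it clarifies why the statement is restricted to negative levels.

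It then remains to bound $\|\nabla v_n\|_2$, for the displayed identity will bound $e^{\lambda_n}$ and hence $\lambda_n$ as well. Writing $u_n=f(v_n)$ and invoking Lemma \ref{LemA1}--(9) in the form $\int_{\mathbb{R}^N}u_n^2|\nabla u_n|^2\le\frac12\|\nabla v_n\|_2^2$, the nonnegative function $u_n^2$ satisfies $\|\nabla(u_n^2)\|_2\le\sqrt2\,\|\nabla v_n\|_2$ and $\|u_n^2\|_1=\|f(v_n)\|_2^2\to m$; the Gagliardo--Nirenberg inequality applied to $u_n^2$ then produces the critical bound $\|f(v_n)\|_p^p\le C\|\nabla v_n\|_2^2(1+o(1))$ with $p=4+\frac4N$ and $C$ independent of $n$ — it is exactly at $p=4+\frac4N$ that the exponent of $\|\nabla v_n\|_2$ equals $2$, which explains why this value is critical. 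On the other hand $(g_2)$--$(g_3)$ and the asymptotics of $f$ furnish, for each $\varepsilon>0$, a constant $C_\varepsilon$ with $|G[f(t)]|\le\varepsilon|f(t)|^p+C_\varepsilon|f(t)|^2$, so that $\int_{\mathbb{R}^N}|G[f(v_n)]|\le\varepsilon C\|\nabla v_n\|_2^2+C_\varepsilon m+o(1)$. Inserting this into $\frac12\|\nabla v_n\|_2^2=c+\int_{\mathbb{R}^N}G[f(v_n)]+o(1)$ (a consequence of \eqref{eqB8}--\eqref{eqB9}) and choosing $\varepsilon$ small enough to absorb the gradient term gives a uniform bound on $\|\nabla v_n\|_2$. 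Finally $\|v_n\|_2$ is controlled by splitting into $\{|v_n|\le1\}$ and $\{|v_n|>1\}$ and using Lemma \ref{LemA1}--(8), so $\{(\lambda_n,v_n)\}$ is bounded in $\mathbb{R}\times H_r^1(\mathbb{R}^N)$.

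Along a subsequence $\lambda_n\to\lambda_0$ and $v_n\rightharpoonup v_0$ in $H_r^1(\mathbb{R}^N)$, and the compact radial embedding $H_r^1(\mathbb{R}^N)\hookrightarrow\hookrightarrow L^q(\mathbb{R}^N)$ ($2<q<2^\ast$, available since $N\ge2$) gives $v_n\to v_0$ in $L^q$ and a.e.; passing to the limit in \eqref{eqB10} against $C_0^\infty(\mathbb{R}^N)$ test functions shows $(\lambda_0,v_0)$ solves the limiting equation $\partial_v J^m(\lambda_0,v_0)=0$. To obtain strong convergence I would test $\partial_v J^m(\lambda_n,v_n)-\partial_v J^m(\lambda_0,v_0)$ with $v_n-v_0$; the left-hand side is $o(1)$, and the nonlinear part $\int_{\mathbb{R}^N}(g[f(v_n)]f'(v_n)-g[f(v_0)]f'(v_0))(v_n-v_0)$ tends to $0$ by the subcritical growth $(g_2)$--$(g_3)$ together with the uniform decay of radial functions and the $L^q$-convergence. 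The decisive term is the potential one: setting $\Phi(t)=f(t)f'(t)$, a direct computation gives $\Phi'(t)=(1+2f^2(t))^{-2}>0$, so $\Phi$ is strictly increasing and $(\Phi(v_n)-\Phi(v_0))(v_n-v_0)\ge0$. Since $e^{\lambda_n}>0$ and $(e^{\lambda_n}-e^{\lambda_0})\int_{\mathbb{R}^N}\Phi(v_0)(v_n-v_0)=o(1)$ (using $|\Phi(v_0)|\le|v_0|\in L^2$), I conclude $\|\nabla(v_n-v_0)\|_2\to0$ and $\int_{\mathbb{R}^N}(\Phi(v_n)-\Phi(v_0))(v_n-v_0)\to0$.

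The remaining and genuinely delicate point is to pass from $\int_{\mathbb{R}^N}(\Phi(v_n)-\Phi(v_0))(v_n-v_0)\to0$ to $\|v_n-v_0\|_2\to0$, since $\Phi$ saturates at large argument and does not by itself control the $L^2$ norm where $v_n$ is large. I would decompose $\mathbb{R}^N$ by amplitude (choosing a threshold $\tau\in(0,1)$ with $|\{|v_0|=\tau\}|=0$): on $\{|v_n|\le1,\ |v_0|\le\tau\}$ the formula for $\Phi'$ gives a uniform lower bound $\Phi'\ge c_1>0$, hence $(\Phi(v_n)-\Phi(v_0))(v_n-v_0)\ge c_1|v_n-v_0|^2$ and this region contributes $o(1)$; on $\{|v_0|>\tau\}$, which has finite measure since $v_0\in L^2$, Hölder's inequality and $v_n\to v_0$ in $L^q$ ($q>2$) give $o(1)$; and on $\{|v_n|>1,\ |v_0|\le\tau\}$ I would use Lemma \ref{LemA1}--(8) to write $|v_n|\le C|f(v_n)|^2$ and combine the $L^4$-convergence of $f(v_n)$ with the a.e.\ vanishing of the indicator of this set via dominated convergence. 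This yields $\|v_n-v_0\|_2\to0$, and with $\lambda_n\to\lambda_0$ and $\|\nabla(v_n-v_0)\|_2\to0$ we obtain strong convergence in $\mathbb{R}\times H_r^1(\mathbb{R}^N)$. I expect the two hardest ingredients to be the critical Gagliardo--Nirenberg estimate $\|f(v_n)\|_p^p\lesssim\|\nabla v_n\|_2^2$ (resting on the quasilinear control of $\int_{\mathbb{R}^N} u^2|\nabla u|^2$ and the exact value $p=4+\frac4N$) and this final $L^2$-recovery through the monotonicity of $\Phi=ff'$.
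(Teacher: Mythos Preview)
Your argument is correct and parallels the paper's proof, with two tactical differences worth noting. For the gradient bound, the paper argues by contradiction: it tests $\partial_v J^m(\lambda_n,v_n)$ against $v_n$ and combines this with the Gagliardo--Nirenberg estimate (your inequality $\|f(v_n)\|_p^p\le C\|f(v_n)\|_2^{4/N}\|\nabla v_n\|_2^2$ appears there as well, with $\theta=\tfrac1{N+1}$) to reach a contradiction if $\|\nabla v_n\|_2\to\infty$; this step simultaneously yields the upper bound on $\lambda_n$. Your route through the energy identity $\tfrac12\|\nabla v_n\|_2^2=c+\int G[f(v_n)]+o(1)$ is more direct and avoids the contradiction argument, but uses only \eqref{eqB8}--\eqref{eqB9}--\eqref{eqB11}, whereas the paper also exploits \eqref{eqB10}.

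For the strong $L^2$-convergence, the paper sidesteps your amplitude decomposition entirely by invoking an external lemma (Lemma~2.6 of \cite{YangWangZhao2014NA}) to obtain $\langle\partial_v J^m(\lambda_n,v_n)-\partial_v J^m(\lambda_n,v_0),v_n-v_0\rangle\ge C\|v_n-v_0\|^2+o_n(1)$ in one stroke. Your explicit three-region analysis via the monotonicity of $\Phi=ff'$ is more self-contained; the only imprecision is the appeal to ``dominated convergence'' on $A_n=\{|v_n|>1,\ |v_0|\le\tau\}$, since no fixed dominating function is available. The fix is immediate: on $A_n$ one has $|v_n-v_0|\ge 1-\tau$, so Chebyshev with the $L^q$-convergence ($2<q<2^\ast$) gives $|A_n|\to0$, and then H\"older yields $\int_{A_n}|v_n-v_0|^2\le|A_n|^{1-2/q}\|v_n-v_0\|_q^2\to0$ without needing the $L^4$-control of $f(v_n)$.
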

\begin{proof}
If $\{(\lambda_n,v_n)\}\subset \mathbb{R}\times H_r^1(\mathbb{R}^N)$ satisfying \eqref{eqB8}--\eqref{eqB11}, then
$$\frac{N}{2}me^{\lambda_n}=P(\lambda_n,v_n)-NJ^m(\lambda_n,v_n)+\|\nabla v_n\|_2^2\geq P(\lambda_n,v_n)-NJ^m(\lambda_n,v_n),$$
implies that $\{\lambda_n\}$ is bounded from below. Since  $$\partial_\lambda J^m(\lambda_n,v_n)={ e^{\lambda_n} \over2}\big(\|f(v_n)\|_2^2-m\big)\rightarrow0,$$
one has   $\|f(v_n)\|_2^2\rightarrow m$.
Next, we prove that $\{\nabla v_n\}$ is bounded in $L^2(\mathbb{R}^N)$. We argue by contradiction that
\begin{equation}\label{eqB13}\|\nabla v_n\|_2^2\rightarrow \infty.\end{equation}
For small $\varepsilon>0,$ by Lemma \ref{LemA1} and the Gagliardo--Nirenberg inequality, there exists $C_\varepsilon>0$ such that
\begin{equation}\aligned\label{eqB15}\int_{\mathbb{R}^N}\big|g[f(v_n)]f^{\prime}(v_n)v_n\big|\leq& C_\varepsilon\|f(v_n)\|_2^2+\varepsilon\|f(v_n)\|_{p}^{p}\\
&\leq C_\varepsilon\|f(v_n)\|_2^2+\varepsilon\|f(v_n)\|_2^{\theta p}\|f(v_n)\|_{22^\ast}^{(1-\theta)p}\\
&\leq C_\varepsilon\|f(v_n)\|_2^2+\varepsilon C\|f(v_n)\|_2^{\theta p}\|v_n\|_{2^\ast}^{\frac{(1-\theta)p}{2}}\\
&\leq C_\varepsilon\|f(v_n)\|_2^2+\varepsilon C_N\|f(v_n)\|_2^{\theta p}\|\nabla v_n\|_2^{\frac{(1-\theta)p}{2}}\\
&= C_\varepsilon\|f(v_n)\|_2^2+\varepsilon C_N\|f(v_n)\|_2^{\frac{4}{N}}\|\nabla v_n\|_2^2,
\endaligned\end{equation}
where $\theta=\frac{1}{N+1}$.
Similarly, one has
\begin{equation}\label{eqB160}
 \|v_n\|_2^2\leq C\|f(v_n)\|_2^2+C\|f(v_n)\|_{p}^{p}\leq C\|f(v_n)\|_2^2+ C_N\|f(v_n)\|_2^{\frac{4}{N}}\|\nabla v_n\|_2^2.
\end{equation}
 For large $n$, from \eqref{eqB15}-\eqref{eqB160} and Lemma \ref{LemA1}, we have
 \begin{equation}\aligned \label{eqB16}
 &2C_N\|\nabla v_n\|_2\\
 &\geq\sqrt{\|\nabla v_n\|_2^2+C\|f(v_n)\|_2^2+ C_N\|f(v_n)\|_2^{\frac{4}{N}}\|\nabla v_n\|_2^2}\\
 &\geq\|\partial_v J^m(\lambda_n,v_n)\|\sqrt{\|\nabla v_n\|_2^2+\|v_n\|_2^2}\\
 &\geq \langle\partial_v J^m(\lambda_n,v_n),v_n\rangle\\
 &= \|\nabla v_n\|_2^2+ e^{\lambda_n}\int_{\mathbb{R}^N}f(v_n)f^{\prime}(v_n)v_n-\int_{\mathbb{R}^N}g[f(v_n)]f^{\prime}(v_n)v_n\\
 &\geq\|\nabla v_n\|_2^2+\frac{1}{2}e^{\lambda_n}\|f(v_n)\|_2^2-C_\varepsilon\|f(v_n)\|_2^2-\varepsilon C_N\|f(v_n)\|_2^{\frac{4}{N}}\|\nabla v_n\|_2^2\\
 &\geq\big(1-\varepsilon C_N(2m)^{\frac{2}{N}}\big)\|\nabla v_n\|_2^2-2C_\varepsilon m,
\endaligned\end{equation}
which contradicts \eqref{eqB13} if we choose sufficiently small $\varepsilon$ . Moreover, the proof of \eqref{eqB16} implies that $\{\lambda_n\}$ is bounded from above.
By claim $2$ of Lemma 2.2 in \cite{YangTang2019JMP}, up to a subsequence, we may assume that $\lambda_n\rightarrow\lambda_0$, and
\begin{equation*}
\label {eqD1}
\begin{cases}
v_n\rightharpoonup v_0~~\hbox{in}~~H_r^1(\mathbb{R}^N),\\
v_n\rightarrow v_0~~\hbox{in}~~L^r(\mathbb{R}^N)~~\hbox{for}~~2<r<2^\ast,\\
v_n\rightarrow v_0~~\hbox{in}~~L_{loc}^s(\mathbb{R}^N)~~\hbox{for}~~2\leq s<2^\ast,\\
v_n\rightarrow v_0~~a.e.~~\hbox{in}~~\mathbb{R}^N.
\end{cases}
\end{equation*}
For small $\varepsilon>0$, by Lemma \ref{LemA1}, there exists $C_\varepsilon>0$ such that
\begin{equation}\aligned \label{eqB166}
&\bigg|\int_{\mathbb{R}^N}\big(g[f(v_n)]f^{\prime}(v_n)-g[f(v_0)]f^{\prime}(v_0)\big)\big(v_n-v_0\big)\bigg|\\
&\leq \int_{\mathbb{R}^N}\big(\varepsilon |f(v_n)||f^{\prime}(v_n)|+C_\varepsilon |f(v_n)|^{p-1}|f^{\prime}(v_n)|+\varepsilon |f(v_0)||f^{\prime}(v_0)|+C_\varepsilon |f(v_0)|^{p-1}|f^{\prime}(v_0)|\big)\big|v_n-v_0\big|\\
&\leq \int_{\mathbb{R}^N}\big(\varepsilon |v_n|+C_\varepsilon |v_n|^{\frac{p-2}{2}}+\varepsilon |v_0|+C_\varepsilon |v_0|^{\frac{p-2}{2}}\big)\big|v_n-v_0\big|\\
&\leq \varepsilon C+C_\varepsilon\bigg(\big\|v_n\big\|_{\frac{p}{2}}^{\frac{p-2}{2}}+\big\|v_0\big\|_{\frac{p}{2}}^{\frac{p-2}{2}}\bigg)\big\|v_n-v_0\big\|_{\frac{p}{2}}\\
&= \varepsilon C+o_n(1).
\endaligned\end{equation}
 Now, by view of  Lemma 2.6 in \cite{YangWangZhao2014NA}, we have
\begin{equation}\aligned\label{eqB19}o_n(1)&=\big\langle\partial_v J^m(\lambda_n,v_n)- \partial_vJ^m(\lambda_n,v_0),v_n-v_0\big\rangle\\
&= \|\nabla v_n-\nabla v_0\|_2^2+ e^{\lambda_n}\int_{\mathbb{R}^N}\big(f(v_n)f^{\prime}(v_n)-f(v_0)f^{\prime}(v_0)\big)\big(v_n-v_0\big)\\
&~~~-\int_{\mathbb{R}^N}\big(g[f(v_n)]f^{\prime}(v_n)-g[f(v_0)]f^{\prime}(v_0)\big)\big(v_n-v_0\big)\\
&\geq C\|v_n-v_0\|^2+o_n(1),
\endaligned\end{equation}
which implies that $v_n\rightarrow v$ in $H_r^1(\mathbb{R}^N)$, and the proof is  completed.
\end{proof}
\subsection{ Deformation Lemma}
Following \cite{Tanaka2019ANS}, we introduce the  augmented functional to construct a deformation flow as follows:
\begin{equation*}
\F^m(\theta,\lambda,v)={e^{(N-2)\theta}\over 2}\|\nabla
v\|^2+{e^\lambda\over 2}\bigg(e^{N\theta}\|f(v)\|^2_2-m\bigg)
-e^{N\theta}\int_{\mathbb{R}^N}
G[f(v)].
\end{equation*}
Without  causing symbol confusion, in this subsection we denote $J^m$ and $\F^m$ by $J$ and $\F$, respectively. By a direct  calculation, we obtain the following Lemma.
\begin{lemma}\label{LemA8}
For any $(\theta,\lambda,v)\in \mathbb{R}^2\times H_r^1(\mathbb{R}^N)$, $\varphi \in  H_r^1(\mathbb{R}^N)$ and $\beta>0$,
$$\F(\theta,\lambda,v)=J\big(\lambda,v(e^{-\theta}x)\big),$$
$$~~~~\partial_\theta \F(\theta,\lambda,v)=P\big(\lambda,v(e^{-\theta}x)\big),$$
$$~~~~\partial_\lambda \F(\theta,\lambda,v)=\partial_\lambda J\big(\lambda,v(e^{-\theta}x)\big),$$
$$~~~~\big\langle\partial_v \F(\theta,\lambda,v),\varphi(x)\big\rangle=\big\langle\partial_v J\big(\lambda,v(e^{-\theta}x)\big),\varphi\big(e^{-\theta}x\big)\big\rangle,$$
$$~~~~~~~~~~\F(\theta+\beta,\lambda,v(e^\beta x))=\F(\theta,\lambda,v(x)).$$
\end{lemma}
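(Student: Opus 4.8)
The plan is to reduce every one of the five identities to the single scaling relation $v_\theta(x) := v(e^{-\theta}x)$ together with the change of variables $y = e^{-\theta}x$, $dx = e^{N\theta}\,dy$. First I would record how the three building blocks of the functionals transform. Since $\nabla v_\theta(x) = e^{-\theta}(\nabla v)(e^{-\theta}x)$, the substitution gives $\|\nabla v_\theta\|_2^2 = e^{(N-2)\theta}\|\nabla v\|_2^2$, while $\|f(v_\theta)\|_2^2 = e^{N\theta}\|f(v)\|_2^2$ and $\int_{\mathbb{R}^N} G[f(v_\theta)] = e^{N\theta}\int_{\mathbb{R}^N} G[f(v)]$, the latter two because $f$ and $G$ act pointwise. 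Substituting these three relations into the definition of $J$ evaluated at $(\lambda, v_\theta)$ reproduces exactly the definition of $\F(\theta,\lambda,v)$, which is the first identity.

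The next two identities then follow by differentiating the explicit formula for $\F$ and matching. Differentiating $\F(\theta,\lambda,v)$ in $\theta$ brings down a factor $(N-2)$ on the gradient term and a factor $N$ on both the $f$ and the $G$ terms (the constant $-m$ carries no $\theta$ dependence and drops out), and comparing with $P(\lambda, v_\theta)$ computed through the same three scaling relations shows $\partial_\theta\F = P(\lambda, v_\theta)$. Differentiating in $\lambda$ touches only the term ${e^\lambda\over 2}(e^{N\theta}\|f(v)\|_2^2 - m)$, and using $\|f(v_\theta)\|_2^2 = e^{N\theta}\|f(v)\|_2^2$ this equals $\partial_\lambda J(\lambda, v_\theta)$, giving the third identity.

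For the fourth identity I would compute the G\^ateaux derivative directly: perturbing $v \mapsto v + s\varphi$ and using that the scaling operator is linear, so that $(v+s\varphi)_\theta = v_\theta + s\,\varphi_\theta$ with $\varphi_\theta(x) = \varphi(e^{-\theta}x)$. Differentiating the three scaled terms at $s=0$ yields $\langle\partial_v\F(\theta,\lambda,v),\varphi\rangle = e^{(N-2)\theta}\langle\nabla v,\nabla\varphi\rangle + e^\lambda e^{N\theta}\int_{\mathbb{R}^N} f(v)f'(v)\varphi - e^{N\theta}\int_{\mathbb{R}^N} g[f(v)]f'(v)\varphi$, and applying the change of variables term by term identifies this with $\langle\partial_v J(\lambda,v_\theta), \varphi_\theta\rangle$. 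This is the step needing the most care, since one must track that the correct test direction for $J$ is the scaled function $\varphi(e^{-\theta}x)$ rather than $\varphi$ itself. Finally, the fifth identity is a cocycle property: writing $u(x) = v(e^\beta x)$ and applying the first identity gives $\F(\theta+\beta,\lambda,u) = J\big(\lambda, u(e^{-(\theta+\beta)}x)\big)$, and since $u(e^{-(\theta+\beta)}x) = v(e^{-\theta}x) = v_\theta$, this equals $J(\lambda, v_\theta) = \F(\theta,\lambda,v)$. The only genuine obstacle is the bookkeeping in the fourth identity; every other part is a direct substitution, which is why the statement can fairly be introduced as a \emph{direct calculation}.
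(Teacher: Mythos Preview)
Your proposal is correct and matches the paper's approach: the paper simply states ``By a direct calculation, we obtain the following Lemma'' without giving any details, and your writeup carries out exactly that calculation via the scaling $v_\theta(x)=v(e^{-\theta}x)$ and the change of variables $x\mapsto e^{-\theta}x$. There is nothing to add.
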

We introduce a metric on $M:=\mathbb{R}^2\times H_r^1(\mathbb{R}^N)$ by
$$\|(\alpha,\nu,h)\|^2_{(\theta,\lambda,v)}=\big|(\alpha,\nu,\|h(e^{-\theta}\cdot)\|)\big|^2$$
for any $(\alpha,\nu,h)\in T_{(\theta,\lambda,v)}M$. Then $M$ be a Hilbert manifold. We also denote the dual norm on $T^\ast_{(\theta,\lambda,v)}M$ by $\|\cdot\|_{(\theta,\lambda,v),\ast}$.

Let  $$\D \F=(\D _\theta \F,\D _\lambda \F,\D _v\F).$$
By a direct computation, we have
$$\|\D \F(\theta,\lambda,v)\|_{(\theta,\lambda,v),\ast}=\big|P(\lambda,v(e^{-\theta}\cdot))\big|^2+\big|\partial_\lambda J\big(\lambda,v(e^{-\theta}\cdot)\big)\big|^2+\big\|\partial_v J\big(\lambda,v(e^{-\theta}\cdot)\big)\big\|^2.$$

We introduce a natural distance  in Sobolev space $\mathbb{R}^2\times H_r^1(\mathbb{R}^N)$ as follows:
\begin{align*}
&\hbox{dist}\big((\theta_1,\lambda_1,v_1),(\theta_2,\lambda_2,v_2)\big)\\
&=\inf\bigg\{\int_0^1||\dot{\sigma}(t)||dt:\sigma(t)\in C^1([0,1],\mathbb{R}^2\times H_r^1(\mathbb{R}^N)),\sigma(0)=(\theta_1,\lambda_1,v_1),\sigma(1)=(\theta_2,\lambda_2,v_2)\bigg\}.
\end{align*}

For $c\in \mathbb{R}$, we denote
$$\widetilde{K^m_c}=\{(\theta,\lambda,v)\in\mathbb{R}^2\times H_r^1(\mathbb{R}^N):\F(\theta,\lambda,v)=c,~\D \F(\theta,\lambda,v)=(0,0,0)\}.$$
By Lemma \ref{LemA8}, we have
$$\widetilde{K^m_c}=\{(\theta,\lambda,v(e^\theta x)):\theta\in\mathbb{R},~(\lambda,v)\in K^m_c\}.$$
For $\rho>0$, $\widetilde{A}\in\mathbb{R}^2\times H_r^1(\mathbb{R}^N)$, $c\in\mathbb{R}$, let us denote
$$\widetilde{N}_\rho(\widetilde{A})=\big\{(\theta_1,\lambda_1,v_1)\in\mathbb{R}^2\times H_r^1(\mathbb{R}^N):\inf_{(\theta_2,\lambda_2,v_2)\in\widetilde{A}}\hbox{dist}\big((\theta_1,\lambda_1,v_1),(\theta_2,\lambda_2,v_2)\big)<\rho\big\},$$
$$\F^c=\{(\theta,\lambda,v)\in\mathbb{R}^2\times H_r^1(\mathbb{R}^N):\F(\theta,\lambda,v)\leq c\}.$$

The following Lemma can be founded in (\cite{Tanaka2019ANS}.

\begin{lemma}\label{LemA9} If $c<0$, then for any $\overline{\varepsilon}>0$ and $\rho>0$ there exist $\varepsilon\in(0,\overline{\varepsilon})$ and a continuous map $\widetilde{\eta}(t,\theta,\lambda,v):[0,1]\rightarrow \mathbb{R}^2\times H_r^1(\mathbb{R}^N)$ such that
\begin{itemize}
\item[$(1)$] $\widetilde{\eta}(0,\theta,\lambda,v)=(\theta,\lambda,v)$ for all $(\theta,\lambda,v)\in \mathbb{R}^2\times H_r^1(\mathbb{R}^N).$
\item[$(2)$]  $\widetilde{\eta}(0,\theta,\lambda,v)=(\theta,\lambda,v)$ for all $(\lambda,v)\in \F^{c-\overline{\varepsilon}}$ and $t\in[0,1].$
\item[$(3)$]  $\F(\widetilde{\eta}(t,\theta,\lambda,v))\leq \F(\theta,\lambda,v)$ for all $(t,\theta,\lambda,v)\in[0,1]\times\mathbb{R}^2\times H_r^1(\mathbb{R}^N).$
\item[$(4)$]  $\widetilde{\eta}(1,\F^{c+{\varepsilon}}\setminus\widetilde{N}_\rho(\widetilde{K}_c))\subset \F^{c-{\varepsilon}},$  $\widetilde{\eta}(1,\F^{c+{\varepsilon}})\subset \F^{c-{\varepsilon}}\cup \widetilde{N}_\rho(\widetilde{K}_c).$
\item[$(5)$]  If $\widetilde{K}_c=\emptyset$, then $\widetilde{\eta}(1,\F^{c+{\varepsilon}})\subset \F^{c-{\varepsilon}}$.
\item[$(6)$] Let $\widetilde{\eta}(t,\theta,\lambda,v)=(\widetilde{\eta_1}(t,\theta,\lambda,v),\widetilde{\eta_2}(t,\theta,\lambda,v),\widetilde{\eta_3}(t,\theta,\lambda,v))$, we have
    $$\widetilde{\eta_1}(t,\theta,\lambda,-v)=\widetilde{\eta_1}(t,\theta,\lambda,v),$$
    $$\widetilde{\eta_2}(t,\theta,\lambda,-v)=\widetilde{\eta_2}(t,\theta,\lambda,v),$$
    $$\widetilde{\eta_3}(t,\theta,\lambda,-v)=-\widetilde{\eta_3}(t,\theta,\lambda,v)$$
    for all $(t,\theta,\lambda,v)\in[0,1]\times\mathbb{R}^2\times H_r^1(\mathbb{R}^N).$
\end{itemize}
\end{lemma}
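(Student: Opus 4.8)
The plan is to run the equivariant quantitative deformation scheme of Hirata--Tanaka \cite{Tanaka2019ANS}, the whole purpose of the augmented functional $\F$ being that its differential $\D\F$ packages exactly the four quantities $(P,\partial_\lambda J,\partial_v J)$ appearing in the $(PSP)_c$ condition, evaluated at the rescaled point $(\lambda,v(e^{-\theta}\cdot))$ as recorded just before the statement. The first and decisive step is a \emph{quantitative} compactness estimate: for $c<0$ there exist $\varepsilon\in(0,\overline\varepsilon)$ and $\delta>0$ with
$$\|\D\F(\theta,\lambda,v)\|_{(\theta,\lambda,v),\ast}\geq\delta\qquad\text{on}\qquad \F^{-1}([c-\varepsilon,c+\varepsilon])\setminus\widetilde N_\rho(\widetilde K_c).$$
I would prove this by contradiction: a violating sequence $(\theta_n,\lambda_n,v_n)$ produces, after setting $w_n=v_n(e^{-\theta_n}\cdot)$ and using Lemma \ref{LemA8}, a sequence $(\lambda_n,w_n)$ satisfying \eqref{eqB8}--\eqref{eqB11}, so Lemma \ref{LemA7} gives $\lambda_n\to\lambda_0$ and $w_n\to w_0$ with $(\lambda_0,w_0)\in K^m_c$. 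The sign restriction $c<0$ is used here and nowhere else, because that is precisely the range in which $(PSP)_c$ holds.

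Granting the estimate, I would construct a locally Lipschitz pseudo-gradient field $\mathcal W$ for $\F$ on the regular set $\{\D\F\neq0\}\subset M$, normalized so that $\langle\D\F,\mathcal W\rangle\geq\min\{\delta,\|\D\F\|\}$ and $\|\mathcal W\|_{(\theta,\lambda,v)}\leq1$. Since both $\F$ and the manifold metric are invariant under the isometric involution $\tau(\theta,\lambda,v)=(\theta,\lambda,-v)$, I would symmetrize by replacing $\mathcal W$ with $\tfrac12\big(\mathcal W+\tau_\ast\,\mathcal W\circ\tau\big)$; one checks directly that this is again a pseudo-gradient and now satisfies $\overline{\mathcal W}(\tau\cdot)=\tau_\ast\overline{\mathcal W}(\cdot)$, which is what will force property $(6)$. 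I would then damp $\mathcal W$ by two Lipschitz cut-offs: one supported in $\F^{-1}([c-\varepsilon,c+\varepsilon])$ and vanishing on $\F^{c-\overline\varepsilon}$, to secure $(2)$, and one vanishing on $\widetilde N_{\rho/2}(\widetilde K_c)$ and equal to $1$ off $\widetilde N_\rho(\widetilde K_c)$, both chosen $\tau$-invariant so that equivariance is preserved.

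The deformation $\widetilde\eta(t,\cdot)$ is then the time-$t$ flow of $\partial_t\widetilde\eta=-\chi(\widetilde\eta)\,\overline{\mathcal W}(\widetilde\eta)$ on the Hilbert manifold $M$. Cauchy--Lipschitz yields a unique flow, and the bound $\|\overline{\mathcal W}\|\leq1$ together with the cut-offs confines trajectories to the band where the estimate applies, so the flow survives on all of $[0,1]$. Properties $(1)$ and $(3)$ are immediate since $\tfrac{d}{dt}\F(\widetilde\eta)=-\chi\langle\D\F,\overline{\mathcal W}\rangle\leq0$; property $(2)$ follows from the first cut-off vanishing on $\F^{c-\overline\varepsilon}$; property $(6)$ follows because $\tau\,\widetilde\eta(t,\tau\cdot)$ solves the same ODE and uniqueness forces equivariance; and $(4)$--$(5)$ follow by integrating $\tfrac{d}{dt}\F(\widetilde\eta)\leq-\delta$ along the portion of any trajectory lying outside $\widetilde N_\rho(\widetilde K_c)$, after fixing $\varepsilon\leq\tfrac12\delta\cdot(\text{flow time})$ in the standard way.

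The main obstacle is the first step, and specifically the non-compactness of $\widetilde K_c$: by Lemma \ref{LemA8} it is invariant under the translation $(\theta,\lambda,v)\mapsto(\theta+\beta,\lambda,v(e^\beta\cdot))$, so it is an $\mathbb{R}$-orbit times a compact set and the $\theta_n$ in the contradiction argument need not be bounded. The resolution is that the distance defined in this subsection is built from the translation-invariant norm $\|h(e^{-\theta}\cdot)\|$, so comparing $(\theta_n,\lambda_n,w_n(e^{\theta_n}\cdot))$ with the point $(\theta_n,\lambda_0,w_0(e^{\theta_n}\cdot))\in\widetilde K_c$ gives a distance comparable to $|\lambda_n-\lambda_0|+\|w_n-w_0\|$, in which the (possibly unbounded) $\theta_n$ cancels; hence the sequence eventually enters $\widetilde N_\rho(\widetilde K_c)$, the desired contradiction. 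Verifying that this cancellation is uniform, and matching $\varepsilon$, $\delta$ and the flow time so that no trajectory can traverse the whole band while lingering in $\widetilde N_\rho(\widetilde K_c)$, is the remaining bookkeeping.
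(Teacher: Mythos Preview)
The paper does not prove this lemma at all; immediately before the statement it writes ``The following Lemma can be founded in \cite{Tanaka2019ANS}'' and moves on. Your proposal correctly supplies the argument that the cited reference contains: the quantitative gradient lower bound via the $(PSP)_c$ compactness of Lemma~\ref{LemA7}, the equivariant pseudo-gradient flow with cut-offs, and---crucially---the observation that the Riemannian distance on $M$ is invariant under the $\theta$-translation $(\theta,\lambda,v)\mapsto(\theta+\beta,\lambda,v(e^\beta\cdot))$, so that the unbounded $\theta_n$ in the contradiction sequence cancels when one compares with $(\theta_n,\lambda_0,w_0(e^{\theta_n}\cdot))\in\widetilde K_c$. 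This is precisely the mechanism in Hirata--Tanaka, and your sketch is faithful to it; in effect you have written out what the paper delegates to the citation.
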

\begin{lemma}( \textbf {Deformation Lemma})\label{LemA11} If $c<0$, then
\begin{itemize}
\item[(\expandafter{\romannumeral1})] $K_c^m$ is compact in $\mathbb{R}\times H_r^1(\mathbb{R}^N)$ and $K_c^m\cap\{\mathbb{R}\times\{0\}\}=\emptyset$.
\item[$(\expandafter{\romannumeral2})$]  For any open neighborhood $N$ of $K_c^m$ and $\overline{\varepsilon}>0$, there exist $\varepsilon\in(0,\overline{\varepsilon})$ and a continuous map $\eta(t,\lambda,v):[0,1]\rightarrow \mathbb{R}\times H_r^1(\mathbb{R}^N)$ such that
\item[$(1)$]  $\eta(0,\lambda,v)=(\lambda,v)$ for all $(\lambda,v)\in \mathbb{R}\times H_r^1(\mathbb{R}^N)$.
\item[$(2)$]  $\eta(t,\lambda,v)=(\lambda,v)$ for all $(\lambda,v)\in J^{c-\overline{\varepsilon}}$ and $t\in[0,1].$
\item[$(3)$]  $J(\eta(t,\lambda,v))\leq J(\lambda,v)$ for all $(t,\lambda,v)\in[0,1]\times\mathbb{R}\times H_r^1(\mathbb{R}^N).$
\item[$(4)$]  $\eta(1,J^{c+{\varepsilon}}\setminus N)\subset J^{c-{\varepsilon}},$   $\eta(1,J^{c+{\varepsilon}})\subset J^{c-{\varepsilon}}\cup N.$
\item[$(5)$]  If $K_c^m=\emptyset$, then $\eta(1,J^{c+{\varepsilon}})\subset J^{c-{\varepsilon}}$.
\item[$(6)$] Let $\eta(t,\lambda,v)=(\eta_1(t,\lambda,v),\eta_2(t,\lambda,v))$, we have
    $$\eta_1(t,\lambda,-v)=\eta_1(t,\lambda,v),$$
    $$\eta_2(t,\lambda,-v)=-\eta_2(t,\lambda,v),$$
    for all $(t,\lambda,v)\in[0,1]\times\mathbb{R}\times H_r^1(\mathbb{R}^N).$
\end{itemize}
\end{lemma}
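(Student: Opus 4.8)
The plan is to derive both assertions from the deformation already built for the augmented functional $\F=\F^m$ on $\mathbb{R}^2\times H_r^1(\mathbb{R}^N)$ in Lemma \ref{LemA9}, transporting that flow down to $\mathbb{R}\times H_r^1(\mathbb{R}^N)$ through the scaling that links $\F$ with $J=J^m$. The bridge is the projection
\[
\Pi:\mathbb{R}^2\times H_r^1(\mathbb{R}^N)\to\mathbb{R}\times H_r^1(\mathbb{R}^N),\qquad \Pi(\theta,\lambda,v)=\big(\lambda,v(e^{-\theta}\cdot)\big),
\]
which by Lemma \ref{LemA8} satisfies $J\big(\Pi(\theta,\lambda,v)\big)=\F(\theta,\lambda,v)$ and $\Pi(0,\lambda,v)=(\lambda,v)$, and which is continuous since the scaling $(\theta,v)\mapsto v(e^{-\theta}\cdot)$ is jointly continuous on $\mathbb{R}\times H_r^1(\mathbb{R}^N)$. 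Note also that $\Pi$ collapses each orbit of $\widetilde{K}_c=\{(\theta,\lambda,v(e^\theta\cdot)):\theta\in\mathbb{R},\,(\lambda,v)\in K_c^m\}$ back onto the generating point $(\lambda,v)\in K_c^m$.

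For part (i), I would take any sequence $\{(\lambda_n,v_n)\}\subset K_c^m$; by definition of $K_c^m$ it satisfies \eqref{eqB8}--\eqref{eqB11} with exact equalities, so Lemma \ref{LemA7} (available because $c<0$) yields a strongly convergent subsequence, and continuity of $J,\partial_\lambda J,\partial_v J,P$ places the limit back in $K_c^m$; hence $K_c^m$ is sequentially, thus topologically, compact. The disjointness $K_c^m\cap(\mathbb{R}\times\{0\})=\emptyset$ is immediate from $\partial_\lambda J(\lambda,0)=\tfrac{e^\lambda}{2}\big(\|f(0)\|_2^2-m\big)=-\tfrac{m e^\lambda}{2}\neq0$ since $m>0$, so the constraint $\partial_\lambda J=0$ can never hold at $v=0$.

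For part (ii), given the neighborhood $N$ and $\overline{\varepsilon}>0$, I would define $\eta(t,\lambda,v):=\Pi\big(\widetilde{\eta}(t,0,\lambda,v)\big)$, i.e. $\eta(t,\lambda,v)=\big(\widetilde{\eta_2}(t,0,\lambda,v),\,\widetilde{\eta_3}(t,0,\lambda,v)(e^{-\widetilde{\eta_1}(t,0,\lambda,v)}\cdot)\big)$, where $\widetilde{\eta}$ is supplied by Lemma \ref{LemA9} for a radius $\rho$ fixed below. Continuity of $\eta$ is clear, and properties (1), (2), (3), (5), (6) then transfer almost formally: (1) from $\Pi(0,\lambda,v)=(\lambda,v)$; (3) from $J(\eta(t,\lambda,v))=\F(\widetilde{\eta}(t,0,\lambda,v))\le\F(0,\lambda,v)=J(\lambda,v)$; (2) because $J(\lambda,v)\le c-\overline{\varepsilon}$ forces $(0,\lambda,v)\in\F^{c-\overline{\varepsilon}}$, which $\widetilde{\eta}$ fixes; (5) from $\widetilde{K}_c=\emptyset\Leftrightarrow K_c^m=\emptyset$; and (6) from the equivariance in Lemma \ref{LemA9}(6) together with the observation that $\Pi$ is odd in $v$ and leaves the scaling parameter unchanged.

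The delicate point, which I expect to be the main obstacle, is property (4): matching the tube $\widetilde{N}_\rho(\widetilde{K}_c)$ in the augmented space with the prescribed neighborhood $N$ after projecting. Concretely I must choose $\rho$ so small that, first, $(\lambda,v)\notin N$ implies $(0,\lambda,v)\notin\widetilde{N}_\rho(\widetilde{K}_c)$, and, second, $\Pi$ carries the reachable part of $\widetilde{N}_\rho(\widetilde{K}_c)$ into $N$; then Lemma \ref{LemA9}(4) applied to $(0,\lambda,v)$ and composed with $\Pi$ gives both inclusions of (4). This is where part (i) is indispensable: compactness of $K_c^m$, the fact that the flow $\widetilde{\eta}$ displaces the $\theta$-coordinate only by a bounded amount (so only a compact range of $\theta$ enters), and the resulting uniform continuity of $\Pi$ on that compact portion of the orbit together allow the required $\rho$ to be chosen. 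The formal transfer of the other properties is routine by comparison with this neighborhood-compatibility step.
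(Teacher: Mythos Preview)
Your approach is essentially the same as the paper's: both introduce the projection $\Pi$ (the paper calls it $\pi$) together with the section $(\lambda,v)\mapsto(0,\lambda,v)$, and then define $\eta=\Pi\circ\widetilde{\eta}\circ l$ so that Lemma \ref{LemA8} and Lemma \ref{LemA9} transfer the properties of $\widetilde{\eta}$ to $\eta$. The paper's proof is terse and refers to \cite{Myself2021JMAA} for the details you have spelled out (in particular the compactness of $K_c^m$ via Lemma \ref{LemA7} and the choice of $\rho$ for property (4)), so your proposal is correct and in fact more complete than what appears in the paper.
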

\begin{proof}
Consider the  following maps:
$$\pi:\mathbb{R}^2\times H_r^1(\mathbb{R}^N)\rightarrow \mathbb{R}\times H_r^1(\mathbb{R}^N),~~(\theta,\lambda,v(x))\mapsto\big(\lambda,v(e^{-\theta}x)\big), $$
$$l:\mathbb{R}\times H_r^1(\mathbb{R}^N)\rightarrow \mathbb{R}^2\times H_r^1(\mathbb{R}^N),~~(\lambda,v(x))\mapsto(0,\lambda,v(x)).$$
By Lemma \ref{LemA8}, we have
$$\pi(l(\lambda,v))=(\lambda,v)~~~~~~~~~~~~~~~~~~\hbox{for}~~\hbox{all}~~(\lambda,v)\in\mathbb{R}\times H_r^1(\mathbb{R}^N),$$
$$l(\pi(\theta,\lambda,v))=\big(0,\lambda,v(e^{-\theta}x)\big)~~~~~~~~\hbox{for}~~\hbox{all}~~(\theta,\lambda,v)\in\mathbb{R}^2\times H_r^1(\mathbb{R}^N),$$
$$\F(\theta,\lambda,v)=J(\pi(\theta,\lambda,v))~~~~~~~~~~~~~\hbox{for}~~\hbox{all}~~(\theta,\lambda,v)\in\mathbb{R}^2\times H_r^1(\mathbb{R}^N).$$
  $\pi(\widetilde{K_c})=K_c.$
From those results above and Lemma \ref{LemA9}, deformation Lemma follows. We can refer the  readers to Lemma 3.5 in \cite{Myself2021JMAA} for details.
\end{proof}
\section{Minimax methods}
\subsection{Construction of multidimensional odd path}
In what follows, we set
\begin{equation*}
J_\lambda(v)={1\over 2}\|\nabla
v\|_2^2+{ e^\lambda \over2}\|f(v)\|_2^2-\int_{\mathbb{R}^N}
G[f(v)],
\end{equation*}
and
\begin{equation*}
\lambda_0=
\begin{cases}
\ln\bigg(2\sup\limits_{s\neq 0}\frac{G(s)}{s^2}\bigg)~~~~\hbox{if}~~\sup\limits_{s\neq 0}\frac{G(s)}{s^2}<\infty,\\
\infty~~~~~~~~~~~~~~~~~~~~~~\hbox{if}~~\sup\limits_{s\neq 0}\frac{G(s)}{s^2}=\infty.
\end{cases}
\end{equation*}
\begin{Def}
For $k=1$ and $\lambda \in (-\infty,\lambda_0)$, let
$$a_1(\lambda)=\inf_{\gamma\in{\Gamma}_1(\lambda)} \max_{\xi \in [0,1]}{J}_\lambda(\gamma(\xi)), $$ where
$$\Gamma_1(\lambda)=\{\gamma\in C([0,1],H_r^1(\mathbb{R}^N)):\gamma(0)=0, {J}_\lambda(\gamma(1))<0\};$$
For  $k\geq2$ and $\lambda \in (-\infty,\lambda_0)$, let
$$a_k(\lambda)=\inf_{\gamma\in{\Gamma}_k(\lambda)} \max_{\xi \in D_k}{J}_\lambda(\gamma(\xi)), $$ where
$$\Gamma_k(\lambda)=\{\gamma\in C(D_k,H_r^1(\mathbb{R}^N)):\gamma(-\xi)=-\gamma(\xi)~\hbox{for}~\xi \in D_k, {J}_\lambda(\gamma(\xi))<0~\hbox{for}~\xi \in \partial  D_k
\}.$$
\end{Def}
\begin{lemma}\label{LemAA0}
 Assume that $\lambda \in (-\infty,\lambda_0)$ and $(g_1)-(g_4)$ hold, $\Gamma_1(\lambda)\neq \emptyset.$  In addition $(g_5)$ holds, then  $\Gamma_k(\lambda)\neq \emptyset$ for any $k\geq1.$
\end{lemma}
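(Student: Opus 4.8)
The plan is to exhibit, for each $k$, an explicit element of $\Gamma_k(\lambda)$; the whole construction is driven by the single fact that $\lambda<\lambda_0$ forces the ``bulk'' coefficient of $J_\lambda$ to be negative at a suitable height.

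First, for $\Gamma_1(\lambda)$. Since $\lambda<\lambda_0$ we have $\tfrac12 e^\lambda<\sup_{s\ne0}G(s)/s^2$, so there is $s^*\ne0$ with $G(s^*)>\tfrac12 e^\lambda (s^*)^2$; equivalently the reduced potential $\widetilde G(s):=G(s)-\tfrac12 e^\lambda s^2$ satisfies $\widetilde G(s^*)>0$. I would then test $J_\lambda$ on a radial plateau: let $u_R\in H^1_r(\R^N)$ equal $s^*$ on $B_R$, cut off linearly to $0$ on $B_{R+1}\setminus B_R$, and set $v_R:=f^{-1}(u_R)$, which lies in $H^1_r(\R^N)$ because $f^{-1}$ is smooth with $f^{-1}(0)=0$ (Lemma \ref{LemA1}(1)) and $u_R$ is bounded with compact support. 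Since $f(v_R)=u_R$,
$$J_\lambda(v_R)=\tfrac12\|\nabla v_R\|_2^2-\int_{\R^N}\widetilde G(u_R),$$
where the bulk term contributes $-\widetilde G(s^*)|B_R|\sim -R^N$ while both the gradient and the contribution of the cut-off annulus are $O(R^{N-1})$. Hence $J_\lambda(v_R)<0$ for $R$ large; fix such a $v_0:=v_R$. The straight segment $\gamma(\xi):=\xi v_0$ is continuous, $\gamma(0)=0$ and $J_\lambda(\gamma(1))<0$, so $\gamma\in\Gamma_1(\lambda)$ (only $\gamma(0)=0$ and $J_\lambda(\gamma(1))<0$ are required, so intermediate levels need not be controlled). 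I record the structural fact behind this: if $\omega\in H^1_r(\R^N)$ has negative bulk coefficient $\tfrac12 e^\lambda\|f(\omega)\|_2^2-\int G[f(\omega)]<0$, then under dilation $\omega_t:=\omega(\cdot/t)$ one has $J_\lambda(\omega_t)=\tfrac12 t^{N-2}\|\nabla\omega\|_2^2+t^N\big(\tfrac12 e^\lambda\|f(\omega)\|_2^2-\int G[f(\omega)]\big)\to-\infty$, with $\sup_{t>0}J_\lambda(\omega_t)=:E_{\max}<\infty$.

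Next, for $\Gamma_k(\lambda)$ with $k\ge2$ (here $(g_5)$ enters). I would place $k$ radial bumps on mutually disjoint annuli $\mathcal A_1,\dots,\mathcal A_k$, chosen far enough apart that the whole construction stays disjoint, and parametrise the $i$-th ``slot'' by $\xi_i\in[-1,1]$ through an odd continuous path $\beta_i:[-1,1]\to H^1_r(\R^N)$, supported in $\mathcal A_i$, with $\beta_i(0)=0$ and $\beta_i(-c)=-\beta_i(c)$. Along each slot the bump is first grown in height and then deepened (by thickening/dilating the negative-bulk profile) so that: (i) the energy $J_\lambda(\beta_i(c))$ is bounded above, over all $c\in[-1,1]$, by one fixed constant $E_0$ (the height of the unavoidable positive ``barrier'' near $0$); and (ii) after reparametrising so that the deepest bump is already reached at $|c|=1/\sqrt k$, one has $J_\lambda(\beta_i(c))\le-\Lambda$ for $|c|\ge1/\sqrt k$, with $\Lambda$ as large as desired. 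Define $\gamma(\xi):=\sum_{i=1}^k\beta_i(\xi_i)$ on $\partial D_k$. Disjoint supports give $J_\lambda(\gamma(\xi))=\sum_{i=1}^k J_\lambda(\beta_i(\xi_i))$, while $(g_5)$ (which makes $f$ odd and $G$ even, hence $J_\lambda$ even) makes each summand depend only on $|\xi_i|$ and renders $\gamma$ odd. On $\partial D_k$ at least one $|\xi_j|\ge1/\sqrt k$, so
$$J_\lambda(\gamma(\xi))\le -\Lambda+(k-1)E_0<0$$
once $\Lambda>(k-1)E_0$, which is possible because $E_0$ is fixed while $\Lambda$ may be taken arbitrarily large. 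Finally I extend $\gamma$ to $D_k$ by the odd radial formula $\gamma(r\omega):=r\,\gamma(\omega)$ for $\omega\in\partial D_k$, $r\in[0,1]$; this is continuous, odd, $\gamma(0)=0$, and keeps $J_\lambda<0$ on $\partial D_k$, so $\gamma\in\Gamma_k(\lambda)$.

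The hard part is precisely this multidimensional construction inside the radial space. Because radial functions admit no translations, the $k$ independent directions must be produced as disjointly supported radial shells rather than as translates of a single bump, and keeping these supports disjoint while the bumps are deepened needs care. More seriously, since $0$ is a strict local minimum of $J_\lambda$ (by $(g_2)$ together with Lemma \ref{LemA1}(4),(15), the quadratic part dominates near $0$), every slot path must cross a region of positive energy before plunging; the whole scheme therefore rests on separating two scales — the barrier height $E_0$ must be a fixed finite constant, independent of the depth $\Lambda$ reachable by the dominant slot. This is exactly what the dilation identity above supplies, since $\sup_{t>0}J_\lambda(\omega_t)=E_{\max}$ is finite whereas $J_\lambda(\omega_t)\to-\infty$. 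I also note that $\Gamma_1(\lambda)\ne\emptyset$ uses only $(g_1)$--$(g_4)$ and $\lambda<\lambda_0$, while $(g_5)$ is needed solely to make the $k$-dimensional map odd for $k\ge2$.
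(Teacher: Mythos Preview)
Your proof is correct in outline but takes a genuinely different route from the paper's, and the comparison is instructive.

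For $\Gamma_1(\lambda)$ the two arguments are essentially the same: build a radial profile at a height where $H(s):=G[f(s)]-\tfrac12 e^\lambda f(s)^2>0$, make $J_\lambda$ negative by enlarging it, and join to $0$.

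For $\Gamma_k(\lambda)$ with $k\ge2$ the strategies diverge. You work on the sphere $\partial D_k$, build an independent odd slot path $\beta_i$ in each of $k$ disjoint radial annuli, arrange that each slot has energy bounded above by a fixed $E_0$ but drops below $-\Lambda$ once $|\xi_i|\ge 1/\sqrt{k}$, and conclude via the pigeonhole bound $J_\lambda(\gamma(\xi))\le -\Lambda+(k-1)E_0$. The paper instead works on the boundary of the cube $\Sigma=\{t\in\mathbb{R}^k:\max_i|t_i|=1\}$ (homeomorphic to $\partial D_k$): slot $i$ carries a bump of height $\mathrm{sgn}(t_i)\big(s_\lambda+(|t_i|-1)\delta\big)$ on an annulus of half-width $|t_i|$, with the heights always kept in the interval where $H>0$. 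On $\Sigma$ at least one $|t_{i_0}|=1$, so $\int_{\mathbb{R}^N}H(\gamma(t))\ge C_0>0$ uniformly in $t\in\Sigma$. A \emph{single} global dilation by $L$ then forces $J_\lambda(\gamma(t)(\cdot/L))<0$ on all of $\partial D_k$, and the interior is filled by scalar multiplication.

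What each approach buys: the paper's cube-boundary construction avoids exactly the two awkward points you flag. Because the heights never leave the region $\{H>0\}$, there is no positive barrier to quantify; and because negativity comes from one uniform dilation applied \emph{after} the boundary map is fixed, there is no circularity between ``how far apart the annuli must be'' and ``how deep the dominant slot must go'', and no risk of supports overlapping during deepening. Your scheme is workable---once $E_0$ is known you can choose $\Lambda$, then the required thickening/dilation, then space the annuli accordingly---but it requires a two-pass construction that you only sketch. The paper's route is shorter precisely because the cube gives one coordinate equal to $1$ rather than merely $\ge 1/\sqrt{k}$, so no balance argument is needed.
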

\begin{proof}
Now, we prove the case for $k\geq2$. Let $H(s):=-\frac{e^\lambda}{2}f^2(s)+G[f(s)]$, for any $\lambda \in (-\infty,\lambda_0)$, by the definition of $\lambda_0,$  there exists $s_\lambda>0$ such that $H(s_\lambda)>0.$  If $(g_1)-(g_5)$ hold, there exists  $\delta>0$ such that
 $$H(s)>0,~\forall s\in (s_\lambda-\delta,s_\lambda+\delta)\cup (-s_\lambda-\delta,-s_\lambda+\delta).$$
 For  $R\gg1$ and $h\in [0,1]$,  let  $$A(R,h)=\{x\in\mathbb{R}^N:R-h\leq|x|\leq R+h\}$$ and
 \begin{equation*}
\chi(R,h;x)=
\begin{cases}
1,~~~~~~~~~~~~~~~~~~~~~~~ \hbox{if}~~x\in A(R,h),\\
0, ~~~~~~~~~~~~~~~~~~~~~~~~\hbox{otherwise}.
\end{cases}
\end{equation*}
 Consider the polyhedron
$$\Sigma:=\{t=(t_1,...,t_k)\in\mathbb{R}^k:\max\limits_{1\leq i\leq k}|t_i|=1\}$$
and define the odd  map  $\gamma:\Sigma\rightarrow H_r^1(\mathbb{R}^N)$ by
 $$\gamma(t)(\cdot)=\sum_{i=1}^{k} \hbox{sgn}(t_i)\big(s_\lambda+(|t_i|-1)\delta\big)\chi(2ki,|t_i|;\cdot).$$
For all $t=(t_1,...,t_k)\in \sum$, we have $\chi(2ki,|t_i|;x)\cap\chi(2kj,|t_j|;\cdot)=0$ if $i\neq j$ and
\begin{equation}\aligned\label{eqBb1}
\int_{\mathbb{R}^N}H(\gamma(t)(x))&=\sum_{i=1}^{k}H\bigg(\hbox{sgn}(t_i)\big(s_\lambda+(|t_i|-1)\delta\big)\bigg)\int_{\mathbb{R}^N}\chi(2ki,|t_i|;x)\\
&\geq H(s_\lambda)\int_{\mathbb{R}^N} \chi(2k{i_0},1;\cdot)\geq C_0.
\endaligned\end{equation}
Let
\begin{equation*}
\chi_{\varepsilon}(R,h;x)=
\begin{cases}
1,~~~~~~~~~~~~~~~~~~~~~~~~~~ \hbox{if}~~R-h\leq|x|\leq R+h,\\
1-\frac{1}{\varepsilon h}\big(R-h-|x|\big),~~\hbox{if}~R-h-\varepsilon h\leq|x|\leq R-h,\\
1-\frac{1}{\varepsilon h}\big(|x|-R-h\big),~~\hbox{if}~~R+h\leq|x|\leq R+h+\varepsilon h,\\
0, ~~~~~~~~~~~~~~~~~~~~~~~~~~~\hbox{otherwise}.
\end{cases}
\end{equation*}
It is easy to see that
\begin{itemize}
\item[(\expandafter{\romannumeral1})] $\chi_{\varepsilon}(R,h;\cdot)\in H_r^1(\mathbb{R}^N);$
\item[(\expandafter{\romannumeral2})]For fixed $R>0$ and small $\varepsilon>0$, $\chi_{\varepsilon}(R,h;\cdot)~~\hbox{is}~~
\hbox{continuous}~~\hbox{in}~~H_r^1(\mathbb{R}^N)~~\hbox{for}~~h\in[0,1];$
\item[(\expandafter{\romannumeral3})]$\chi_{\varepsilon}(R,h;\cdot)\rightarrow \chi(R,h;\cdot)~~\hbox{in}~~L^r(\mathbb{R}^N)~~\hbox{as}~~\varepsilon\rightarrow0~~\hbox{uniformly}~~\hbox{for}~~h\in[0,1]~~\hbox{and}~~R\leq C;$
\item[(\expandafter{\romannumeral4})] $\chi_{\varepsilon}(R,h;\cdot)\rightarrow 0~~\hbox{in}~~L^r(\mathbb{R}^N)~~\hbox{as}~~h\rightarrow0~~\hbox{uniformly}~~\hbox{for}~~\hbox{small}~~\varepsilon>0~~\hbox{and}~~R\leq C.$
\item[(\expandafter{\romannumeral5})]For each $t=(t_1,...,t_k)\in\sum$,  $\hbox{supp}(\chi_{\varepsilon}(2ki,|t_i|;\cdot))\cap\hbox{supp}(\chi_{\varepsilon}(2kj,|t_j|;\cdot))=\emptyset~~\hbox{if}~~i\neq j$.
\end{itemize}
Let $\gamma_\varepsilon:\sum\rightarrow H_r^1(\mathbb{R}^N)$ defined by
$$\gamma_\varepsilon(t)(\cdot)=\sum_{i=1}^{k} \hbox{sgn}(t_i)\big(s_\lambda+(|t_i|-1)\delta\big)\chi_{\varepsilon}(2ki,|t_i|;\cdot).$$
From (\expandafter{\romannumeral1})--(\expandafter{\romannumeral2}), (\expandafter{\romannumeral4}), $\gamma_\varepsilon$ is odd and continuous. We observe that $\int_{\mathbb{R}^N}H(v)$ is continuous in  $L^r(\mathbb{R}^N)$ for $r\in [2,2^\ast]$. By using  \eqref{eqBb1}, (\expandafter{\romannumeral3}) and (\expandafter{\romannumeral5}), there holds
\begin{equation}\aligned\label{eqBb2}
\int_{\mathbb{R}^N}H(\gamma_\varepsilon(t)(x))&=\sum_{i=1}^{k}H\bigg(\hbox{sgn}(t_i)\big(s_\lambda+(|t_i|-1)\delta\big)\bigg)\int_{\mathbb{R}^N}\chi_{\varepsilon}(2ki,|t_i|;\cdot)\\
&\geq H(s_\lambda)\int_{\mathbb{R}^N} \chi_\varepsilon(2k{i_0},1;\cdot)\\
& \rightarrow H(s_\lambda)\int_{\mathbb{R}^N} \chi(2k{i_0},1;\cdot)\\
&\geq C_0.
\endaligned\end{equation}
For small $\varepsilon$, since $\Sigma$ is homeomorphic to $\partial D_k$, we may assume that
$$\int_{\mathbb{R}^N}H(\gamma_\varepsilon(t))>C~~\hbox{for}~~\hbox{all}~~t\in\partial D_k.$$
Note that
\begin{align*}
J_\lambda\big(\gamma_\varepsilon(t)(L^{-1}\cdot)\big)&={1\over 2}L^{N-2}\|\nabla
\gamma_\varepsilon(t)\|_2^2-L^N\int_{\mathbb{R}^N}H(\gamma_\varepsilon(t)(\cdot)),
\end{align*}
there exists  some large $L>0$ such that
$$J_\lambda\big(\gamma_\varepsilon(t)(L^{-1}\cdot)\big)<0~~\hbox{for}~~\hbox{all}~~t\in\partial D_k.$$
 Let
$$\eta(s,t)(\cdot)=s\gamma_\varepsilon(t)(L^{-1}\cdot).$$  Then  $D_k=\{st|s\in[0,1],t\in\sum\}$, and hence $\eta\in \Gamma_k(\lambda).$

If $(g_1)-(g_4)$ hold, there exist $s_\lambda>0$ and  $\delta>0$ such that
 $$H(s)>0,~\forall s\in (s_\lambda-\delta,s_\lambda+\delta).$$  For $t\in[0,1]$, we set $\gamma_\varepsilon(t)(\cdot)=\big(s_\lambda+(t-1)\delta\big)\chi_{\varepsilon}(R,t;\cdot).$
 Then $$\int_{\mathbb{R}^N}H(\gamma_\varepsilon(1))=H(s_\lambda)\int_{\mathbb{R}^N}\chi_{\varepsilon}(R,1;\cdot)\geq C.$$
 Take large $L>0$ and let $\eta(t)(\cdot)=\gamma_\varepsilon(t)(L^{-1}\cdot)$, we have  $\eta\in \Gamma_1(\lambda).$
\end{proof}

For any $\varepsilon>0$, by Lemma \ref{LemA1} and the Sobolev  theorem,  there exists $C_\varepsilon>0$ such that
 $$\bigg|\int_{\mathbb{R}^N}
G[f(v)]\bigg|\leq\varepsilon\big(\|\nabla
v\|_2^2+\|f(v)\|_2^2\big)+C_\varepsilon
\big(\|\nabla
v\|_2^2+\|f(v)\|_2^2\big)^{\frac{2^*}{2}}.$$
For all $v\in  \partial \Sigma_\rho$ and small $\varepsilon>0$, we have
\begin{equation}\aligned\label{eqBbb1}
J_\lambda(v)&={1\over 2}\|\nabla
v\|_2^2+{ e^\lambda \over2}\|f(v)\|_2^2-\int_{\mathbb{R}^N}
G[f(v)]\\
&\geq\big(C_\lambda-\varepsilon\big)\big(\|\nabla
v\|_2^2+\|f(v)\|_2^2\big)-C_\varepsilon
\big(\|\nabla
v\|_2^2+\|f(v)\|_2^2\big)^{\frac{2^*}{2}}\\
&\geq {C_\lambda\over 2}\rho-C_\lambda^\prime\rho^{\frac{2^*}{2}}.
\endaligned\end{equation}
Then for any small $\rho>0$, there exists $\alpha_\rho>0$ such that $J_\lambda(v)\geq\alpha_\rho$ for $v\in  \partial \Sigma_\rho$.
By the definition of $a_k(\lambda)$, we get that  $0\leq  m_k\leq m_{k+1},~\forall~k\in\mathbb{N}^\ast,$ where
  $$m_k:=2\inf\limits_{\lambda\in(-\infty,\lambda_0)}\frac{a_k(\lambda)}{e^\lambda}.$$

Furthermore, we have the following result:
\par
\begin{lemma}\label{LemA2}
 Assume that $\lambda \in (-\infty,\lambda_0)$ and $(g_1)-(g_4)$ hold. For any $k\in\mathbb{N}$,
\begin{itemize}
\item[$(1)$] $m_k=0$, if $(g_6)$ holds.
\item[$(2)$] $m_k>0$, if  $\limsup\limits_{s\rightarrow0}\frac{|g(s)|}{|s|^{q-2}s}<\infty.$
\end{itemize}
\end{lemma}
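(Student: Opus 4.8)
The plan is to read off the two parts from matching upper and lower estimates for $a_k(\lambda)$, all organized around the fact that $q=2+\frac4N$ is the $L^2$-critical exponent: under the dilation $v\mapsto v(\cdot/t)$ the term $\|\nabla v\|_2^2$ scales like $t^{N-2}$ while the $q$-part of $\int G[f(v)]$ scales like $t^N$ once the amplitude is tied to $e^\lambda$ by $s^{4/N}\sim e^\lambda$. Throughout I write $H(s)=-\frac{e^\lambda}2 f^2(s)+G[f(s)]$, so that $J_\lambda(v)=\frac12\|\nabla v\|_2^2-\int H(v)$ and $P(\lambda,v)=\frac{N-2}2\|\nabla v\|_2^2-N\int H(v)$. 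For part (2) the extra hypothesis $\limsup_{s\to0}|g(s)|\,|s|^{1-q}<\infty$ together with $(g_2)$--$(g_3)$ gives $|G(t)|\le C(|t|^q+|t|^p)$, and then Lemma \ref{LemA1} and the Gagliardo--Nirenberg inequality (exactly as in the proof of Lemma \ref{LemA7}) yield the crucial bound $\int G[f(v)]\le C\|\nabla v\|_2^2\|f(v)\|_2^{4/N}$, the factor $\|\nabla v\|_2^2$ being produced precisely by the $L^2$-critical exponent.

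For part (2) I would prove $m_k>0$ through a lower bound on $a_k(\lambda)$ valid for every $k$. Set $b(\lambda):=C_N\inf\{\|\nabla v\|_2^N/(\int H(v))^{(N-2)/2}:\int H(v)>0\}$ with $C_N=\frac1N(\frac{N-2}{2N})^{(N-2)/2}$, the constant in the dilation identity $\max_{t>0}J_\lambda(v(\cdot/t))=C_N\|\nabla v\|_2^N/(\int H(v))^{(N-2)/2}$ (valid when $\int H(v)>0$). Given $\gamma\in\Gamma_k(\lambda)$ I restrict it to a ray $\tau\mapsto\gamma(\tau\xi_0)$, $\tau\in[0,1]$, from $0$ to a boundary point $\xi_0\in\partial D_k$ with $J_\lambda(\gamma(\xi_0))<0$, hence $P(\lambda,\gamma(\xi_0))<0$; near the nontrivial start of the ray $P(\lambda,\gamma)>0$ because $\int G[f(\gamma)]=o(\|\nabla\gamma\|_2^2)$ as $\gamma\to0$. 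The intermediate value theorem then produces $\xi^\ast$ with $P(\lambda,\gamma(\xi^\ast))=0$, and on the Pohozaev manifold $J_\lambda=\frac1N\|\nabla\cdot\|_2^2$ coincides with the dilation maximum, so $\max_{D_k}J_\lambda(\gamma)\ge J_\lambda(\gamma(\xi^\ast))\ge b(\lambda)$. Finally I extract the $e^\lambda$: the constraint $\int H(v)>0$ combined with the above Gagliardo--Nirenberg bound forces $\|\nabla v\|_2^2>\frac{e^\lambda}{2C}\|f(v)\|_2^{2(N-2)/N}$, and substituting this into the quotient gives $\|\nabla v\|_2^N/(\int H(v))^{(N-2)/2}\ge e^\lambda/(2C^{N/2})$. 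Hence $b(\lambda)\ge ce^\lambda$ with $c>0$ independent of $\lambda$, so $a_k(\lambda)\ge ce^\lambda$ and $m_k=2\inf_\lambda a_k(\lambda)/e^\lambda\ge2c>0$.

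For part (1), under the extra hypothesis $(g_6)$ I would exhibit competitors making $a_k(\lambda)/e^\lambda$ as small as desired. Take $k$ disjoint congruent radial annular bumps $\phi_1,\dots,\phi_k$ and, writing $\xi=\tau\omega$ with $\tau=|\xi|\in[0,1]$ and $\omega\in S^{k-1}$, set $\gamma(\xi)=\sum_{i=1}^k\omega_i\,s\,\phi_i((\tau L)^{-1}\cdot)$, which is odd; since the supports stay disjoint under a common dilation, $J_\lambda(\gamma(\tau\omega))=\frac{(\tau L)^{N-2}s^2}2\|\nabla\phi_1\|_2^2-(\tau L)^N\sum_i\int H(\omega_i s\phi_i)$. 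Choosing $s^{4/N}=ce^\lambda$ makes the mass part of $\int H$ of order $s^q$, and Lemma \ref{LemA1}(16) with a free constant $\Lambda$ gives $\sum_i\int H(\omega_i s\phi_i)\ge s^q(\Lambda\|\phi_1\|_q^q\sum_i|\omega_i|^q-\frac{\|\phi_1\|_2^2}{2c})-o(s^q)$. The elementary inequality $\sum_i|\omega_i|^q\ge k^{-2/N}$ (valid because $\sum\omega_i^2=1$ and $q>2$) makes this uniformly positive in $\omega$ once $\Lambda$ is large depending on $k$; maximizing in $\tau$, with $L$ large enough to reach the optimal dilation and to force $J_\lambda<0$ on $\partial D_k$, yields $\max_{D_k}J_\lambda(\gamma)\le C(\Lambda,k)e^\lambda$ with $C(\Lambda,k)\to0$ as $\Lambda\to\infty$. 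Sending $\lambda\to-\infty$ for each fixed $\Lambda$ then gives $\inf_\lambda a_k(\lambda)/e^\lambda=0$, i.e. $m_k=0$.

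The main obstacle differs in the two parts. In part (1) it is the control of the genuinely $k$-dimensional maximum: one must keep $\sum_i\int H(\omega_i s\phi_i)>0$ simultaneously for all directions $\omega$, which is exactly where the uniform bound $\sum_i|\omega_i|^q\ge k^{-2/N}$ and the freedom to enlarge the constant $\Lambda$ in Lemma \ref{LemA1}(16) enter. In part (2) the delicate point is that the naive crossing of $\partial\Sigma_\rho$ only yields a lower bound of order $e^{\lambda(N+2)/2}=o(e^\lambda)$, too weak to conclude; one is forced to cross the Pohozaev manifold $\{P(\lambda,\cdot)=0\}$ instead and to read off the sharp factor $e^\lambda$ from the sign constraint $\int H>0$. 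Finally, both parts need a separate treatment when $N=2$, since the dilation exponent $N-2$ degenerates and the embedding bound $\|f(v)\|_{22^\ast}\lesssim\|\nabla v\|_2^{1/2}$ used for the $p$-term is unavailable; there I would replace the pure dilation by a joint amplitude--dilation scaling and use the planar Gagliardo--Nirenberg inequalities, the conclusions being unchanged.
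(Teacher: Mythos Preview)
Your proposal is essentially correct, and in each part it follows a recognisably different path from the paper.

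For part~(1) the paper does not build annular bumps. It fixes a $k$-dimensional subspace $E_k=\mathrm{span}\{e_1,\dots,e_k\}\subset H^1_r$, uses the equivalence of all norms on $E_k$, and tests with the single rescaling $\gamma(\xi)(\cdot)=e^{\lambda/(q-2)}\eta(\xi)(e^{\lambda/2}\cdot)$; uniformity in $\xi$ then comes for free and one obtains $\limsup_{\lambda\to-\infty}a_k(\lambda)/e^\lambda\le\max_{s\ge0}(\tfrac12 s^2-Lc_q^qs^q)\to0$ as $L\to\infty$. Your disjoint-bump path with the elementary bound $\sum_i|\omega_i|^q\ge k^{-2/N}$ achieves the same end; just note that radial bumps supported in different annuli cannot literally be ``congruent'' (the norms $\|\phi_i\|_2,\|\phi_i\|_q,\|\nabla\phi_i\|_2$ depend on the radius), so the equalities in your energy identity must be replaced by uniform two-sided bounds over $i=1,\dots,k$ --- a cosmetic repair that does not affect the conclusion.

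For part~(2) the two arguments are genuinely different. The paper reverses the $L^2$-critical scaling, setting $\eta_\lambda(\xi)=e^{-\lambda/(q-2)}\gamma_\lambda(\xi)(e^{-\lambda/2}\cdot)$, and reads off $\max_\xi J_\lambda(\gamma_\lambda)\ge e^\lambda\cdot(\text{a fixed mountain-pass quantity in }\eta_\lambda)$; the transfer of the $\|f(\cdot)\|_2^2$ term through this rescaling is exactly where the new inequality Lemma~\ref{LemA1}(14) ($f^2(rt)\ge Cr^2f^2(t)$ for $r\le1$) is used. Your Pohozaev-crossing argument avoids (14) entirely: the crossing gives $\max J_\lambda(\gamma)\ge J_\lambda|_{\{P=0\}}=\tfrac1N\|\nabla\cdot\|_2^2$, and the sign constraint $\int H>0$ together with the Gagliardo--Nirenberg bound of Lemma~\ref{LemA7} forces the precise factor $e^\lambda$ in the quotient $\|\nabla v\|_2^N/(\int H)^{(N-2)/2}$. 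Both routes exploit that $q=2+4/N$ is $L^2$-critical, but the paper's makes explicit the role of the change-of-variable property~(14), while yours is more geometric and independent of that lemma. Your flag that $N=2$ requires a separate treatment is accurate for both approaches.
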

\begin{proof}
 Denote  $E_k=\hbox{span}\{e_1,e_2,...,e_k\}$, where $\{e_i\}$ is a sequence  of orthonormal basis in $H_r^1(\mathbb{R}^N)$.  Since $E_k$ is  a finite dimensional space, for any  $2\leq s\leq 2^\ast$ there exists $c_s>0$ such that
\begin{equation} \label{eqB0} c_s\|v\|_s \leq \|v\|\leq  c_s^{-1}\|v\|_s,~~~~\forall v\in E_k.\end{equation}
Thus, we may choose some $R>0$ such that
$$\frac{1}{2}\|v\|^2-\frac{1}{s}\|v\|^s_s<0,~~~~\forall v\in E_k\setminus B_R(0).$$
Let $\eta:\mathbb{R}^k\rightarrow E_k$ defined by
$$\eta(\xi)=R\sum_{i=1}^k\xi_i e_i,$$
where $\xi=(\xi_1,...,\xi_k) \in \mathbb{R}^k$.  For any $ \xi\in \partial D_k$, we have
\begin{equation} \label{eqB1}\frac{1}{2}\|\eta(\xi)\|^2-\frac{1}{q}\|\eta(\xi)\|^{q}_{q}<0.\end{equation}
Let $\gamma(\xi)(\cdot)=e^{\frac{\lambda}{q-2}}\eta(\xi)(e^{\frac{\lambda}{2}}\cdot)$. For any $ \xi\in \partial D_k$ with  $L>\frac{1}{q}$, it follow  \eqref{eqB0} and Lemma \ref{LemA1}-$(16)$ that
\begin{align*}
J_\lambda(\gamma(\xi))&\leq{1\over 2}\|\nabla
\gamma(\xi)\|_2^2+{e^\lambda \over2}\|\gamma(\xi)\|_2^2-L\|\gamma(\xi)\|_{q}^{q}
+C_L\|\gamma(\xi)\|_{r}^{r}\\
&= e^\lambda\big({1\over 2}\|\eta(\xi)\|^2-L\|\eta(\xi)\|_{q}^{q}
+C_Le^{\frac{r-q}{q-2}\lambda}\|\eta(\xi)\|_r^r\big)\\
&\leq e^\lambda\bigg({1\over 2}\|\eta(\xi)\|^2-\frac{1}{q}\|\eta(\xi)\|^{q}_q+\big(\frac{1}{q}-L\big)\|\eta(\xi)\|^{q}_{q}
+C_Le^{\frac{r-q}{q-2}\lambda}\|\gamma_k(\xi)\|^{r}_{r}\bigg)\\
&\leq e^\lambda\bigg({1\over 2}\|\eta(\xi)\|^2-\frac{1}{q}\|\eta(\xi)\|^{q}+\big(\frac{1}{q}-L\big)(Rc_q)^{q}
+C_Le^{\frac{r-q}{q-2}\lambda}(Rc_r^{-1})^{r}\bigg),\\
\end{align*}
which implies that $\gamma \in \Gamma_k(\lambda)$ as $\lambda\rightarrow -\infty$.  Consequently,
\begin{align*}
\limsup_{\lambda\rightarrow-\infty}\frac{a_k(\lambda)}{e^\lambda}&\leq  \max_{\xi \in D_k}\bigg({1\over 2}\|\eta(\xi)\|^2-L\|\eta(\xi)\|^{q}_{q}\bigg)\\
&\leq  \max_{\xi \in D_k}\bigg({1\over 2}\|\eta(\xi)\|^2-L(c_q)^{q}\|\eta(\xi)\|^{q}\bigg)\\
&\leq  \max_{s\geq0}\bigg({1\over 2}|s|^2-L(c_q)^{q}|s|^{q}\bigg)\\
&=\bigg({1\over 2}|s|^2-L(c_q)^{q}|s|^{q+1}\bigg)\bigg|_{s=\big(Lq(c_q)^q\big)^{-\frac{1}{q-2}}}\\
&\rightarrow0
\end{align*}
 as $L\rightarrow+\infty$, and hence  $(1)$ holds.
If $\limsup\limits_{s\rightarrow0}\frac{|g(s)|}{|s|^{q-2}s}<\infty,$ it follows from $(g_1)$ and $(g_3)$ that
$$G[f(s)]\leq C|s|^q~~\hbox{for}~~\hbox{all}~~s\in \mathbb{R}.$$
For any $\lambda\rightarrow-\infty$ and $\gamma_\lambda \in {\Gamma}_k(\lambda)$, let $\eta_\lambda(\xi)(\cdot)=e^{-\frac{\lambda}{q-2}}\gamma_\lambda(\xi)(e^{-\frac{\lambda}{2}}\cdot)$.
Since $e^{\frac{\lambda}{q-2}}\leq 1$, by Lemma \ref{LemA1} and Sobolev  theorem, one has
\begin{align*}
\max_{\xi \in D_k}J_\lambda(\lambda,\gamma_\lambda(\xi))&=\max_{\xi \in D_k}\bigg({1\over 2}\|\nabla\gamma_\lambda(\xi)\|_2^2+{ e^\lambda \over2}\|f[\gamma_\lambda(\xi)]\|_2^2-\int_{\mathbb{R}^N}
G[f(\gamma_\lambda(\xi))]\bigg)\\
&\geq \max_{\xi \in D_k} \bigg({1\over 2}\|\nabla\gamma_\lambda(\xi)\|_2^2+{ e^\lambda \over2}\|f[\gamma_\lambda(\xi)]\|_2^2-C\|\gamma_\lambda(\xi)\|_q^q\bigg)\\
&\geq e^\lambda\max_{\xi \in D_k} \bigg({1\over 2}\big(\|\nabla\eta_\lambda(\xi)\|_2^2+{1\over 2}\|f[\eta_\lambda(\xi)]\|_2^2-C\|\eta_\lambda(\xi)\|_q^q
\bigg)\\
&\geq e^\lambda\max_{\xi \in D_k} \bigg({1\over 2}\big(\|\nabla\eta_\lambda(\xi)\|_2^2+{1\over 2}\|f[\eta_\lambda(\xi)]\|_2^2-\varepsilon\|f[\eta_\lambda(\xi)]\|_2^2-C_\varepsilon\|f\eta_\lambda(\xi)]\|_{2^*}^{2^*}
\bigg)\\
&\geq e^\lambda\max_{\xi \in D_k}\bigg({1\over 4}\big(\|\nabla\eta_\lambda(\xi)\|_2^2+\|f[\eta_\lambda(\xi)]\|_2^2\big)-C\big(\|\nabla\eta_\lambda(\xi)\|_2^2+\|f[\eta_\lambda(\xi)]\|_2^2\big)^{\frac{2^*}{2}}
\bigg)\\
&= e^\lambda\max_{\rho\geq0}\bigg(\frac{1}{4}\rho-C\rho^{\frac{2^\ast}{2}}\bigg)\\
&=e^\lambda\frac{1}{2N}\bigg(\frac{N-2}{4NC}\bigg)^{\frac{N-2}{2}}
\end{align*}
Since $\gamma_\lambda$ is arbitrary, we have $m_k\geq \frac{1}{2N}\big(\frac{N-2}{4NC}\big)^{\frac{N-2}{2}}$, and the proof is completed.
\end{proof}

\begin{lemma}\label{LemAA2} Assume that $\lambda_0 =\infty$ and $(g_1)-(g_4)$ hold. Then
$\lim\limits_{\lambda\rightarrow+\infty}\frac{a_k(\lambda)}{e^\lambda}=+\infty.$
\end{lemma}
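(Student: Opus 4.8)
The plan is to reduce the statement to the one–dimensional level $a_1(\lambda)$ and then to prove $a_1(\lambda)/e^\lambda\to+\infty$ by playing a positive but only \emph{linear} lower bound against a rescaling argument that forbids boundedness. Since $\lambda_0=\infty$, Lemma \ref{LemAA0} guarantees $\Gamma_k(\lambda)\neq\emptyset$ for every $\lambda\in\R$, so all the levels are well defined. For the reduction, let $\gamma\in\Gamma_k(\lambda)$; as $\gamma$ is odd we have $\gamma(0)=0$, and fixing $\xi_0\in\partial D_k$ the segment $p(t):=\gamma(t\xi_0)$, $t\in[0,1]$, satisfies $p(0)=0$ and $J_\lambda(p(1))=J_\lambda(\gamma(\xi_0))<0$, i.e. $p\in\Gamma_1(\lambda)$. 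Because $\{t\xi_0:t\in[0,1]\}\subset D_k$, this gives $\max_{D_k}J_\lambda\circ\gamma\ge\max_{[0,1]}J_\lambda\circ p\ge a_1(\lambda)$, and taking the infimum over $\gamma$ yields $a_k(\lambda)\ge a_1(\lambda)$. Hence it suffices to treat $k=1$.

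I would first record the description of $a_1(\lambda)$ through the Pohozaev mountain of \cite{CingolaniTanaka2022CVPDE}: one has $a_1(\lambda)=\tfrac1N\inf\{\|\nabla v\|_2^2:v\neq0,\ P(\lambda,v)=0\}$, which follows because on $\{P(\lambda,\cdot)=0\}$ the functional reduces identically to $J_\lambda(v)=\tfrac1N\|\nabla v\|_2^2$. Next, combining Lemma \ref{LemA1}-(7),(13),(15) with the Gagliardo--Nirenberg inequality (precisely the chain used in \eqref{eqB15}) gives $\int_{\R^N}G[f(v)]\le C_1\|f(v)\|_2^2+C_2\|f(v)\|_2^{4/N}\|\nabla v\|_2^2$, whence $J_\lambda(v)\ge\bigl(\tfrac12-C_2\|f(v)\|_2^{4/N}\bigr)\|\nabla v\|_2^2+\bigl(\tfrac{e^\lambda}2-C_1\bigr)\|f(v)\|_2^2$. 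Along any $\gamma\in\Gamma_1(\lambda)$ the map $t\mapsto\|f(\gamma(t))\|_2^2$ runs from $0$ up to a value exceeding the threshold at which the bracket $\tfrac12-C_2\|f\|_2^{4/N}$ changes sign; choosing the crossing level $Y_1$ where this bracket equals $\tfrac14$ and evaluating at the first crossing time gives $\max_{[0,1]}J_\lambda\circ\gamma\ge\bigl(\tfrac{e^\lambda}2-C_1\bigr)Y_1$. This already yields the positive (linear) bound $\liminf_{\lambda\to+\infty}a_1(\lambda)/e^\lambda\ge Y_1/2>0$.

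The decisive step rules out that $a_1(\lambda)/e^\lambda$ remains bounded. Suppose $a_1(\lambda_n)\le L\,e^{\lambda_n}$ along some $\lambda_n\to+\infty$ and choose near-minimizers $w_n$ on the Pohozaev set, so that $\|\nabla w_n\|_2^2\lesssim e^{\lambda_n}$ and $\tfrac{N-2}2\|\nabla w_n\|_2^2+\tfrac{Ne^{\lambda_n}}2\|f(w_n)\|_2^2=N\int_{\R^N}G[f(w_n)]$. The homogeneous estimate above merely reproduces the linear bound, so here one must exploit the \emph{strict} limits in $(g_2)$--$(g_3)$: for a fixed intermediate exponent $\kappa\in(2,p)$ and any $\varepsilon>0$,
\[ \int_{\R^N}G[f(w)]\le\varepsilon\|f(w)\|_2^2+\varepsilon C\,\|f(w)\|_2^{4/N}\|\nabla w\|_2^2+C_\varepsilon\,\|f(w)\|_2^{\beta}\|\nabla w\|_2^{\theta}, \]
where the subcritical remainder obeys $\theta=\theta(\kappa)<2$, obtained by interpolating $\|f(w)\|_\kappa$ between $\|f(w)\|_2$ and $\|f(w)\|_{22^\ast}\le C\|\nabla w\|_2^{1/2}$ (Lemma \ref{LemA1}-(7) and Sobolev). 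For $N\ge3$ the crude Gagliardo--Nirenberg bound already forces $\|f(w_n)\|_2^2\le C$; inserting the refined inequality into the Pohozaev identity, dividing by $e^{\lambda_n}$ and using $\|\nabla w_n\|_2^2\lesssim e^{\lambda_n}$ together with $\theta<2$, the remainder is $o(e^{\lambda_n})$, and after first fixing $\varepsilon$ small and then sending $n\to\infty$ one is forced to $\|f(w_n)\|_2^2\to0$ and $\|\nabla w_n\|_2^2=o(e^{\lambda_n})$, i.e. $a_1(\lambda_n)/e^{\lambda_n}\to0$. This contradicts the previous step, so $a_1(\lambda)/e^\lambda\to+\infty$. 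The non-homogeneity of $f$ entering these rescalings is absorbed via Lemma \ref{LemA1}-(14), exactly as in the scaling computations of Lemma \ref{LemA2}.

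The main obstacle is this last step. A single homogeneous Gagliardo--Nirenberg bound can only give $a_1(\lambda)\gtrsim e^\lambda$, and the superlinear gain genuinely requires extracting the strictly subcritical remainder from $(g_3)$ (the gradient power $\theta<2$) while controlling the non-homogeneous change of variables $f$; this is also what makes the borderline case $N=2$ delicate, since there the refined estimate is needed even to bound $\|f(w_n)\|_2^2$. Making this collapse quantitative — equivalently, proving directly that $\inf_{P(\lambda,\cdot)=0}\|\nabla v\|_2^2$ grows faster than $e^\lambda$ — is where the real work lies, whereas the reduction to $k=1$ and the linear barrier bound are routine.
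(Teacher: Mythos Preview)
Your reduction to $k=1$ and the Pohozaev description $a_1(\lambda)=\tfrac1N\inf\{\|\nabla v\|_2^2:P(\lambda,v)=0,\ v\neq0\}$ are fine, and for $N\geq3$ your two-step contradiction is correct: the crude estimate gives $\|f(w_n)\|_2^2\leq M(L)$, the refined one gives
\[
\frac{N-2}{2N}\,\frac{\|\nabla w_n\|_2^2}{e^{\lambda_n}}+\frac12\|f(w_n)\|_2^2\ \le\ K\varepsilon+o_n(1),
\]
and letting $n\to\infty$ then $\varepsilon\to0$ forces $\|\nabla w_n\|_2^2/e^{\lambda_n}\to0$, contradicting the linear barrier. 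So for $N\geq3$ you have a valid and genuinely different argument from the paper's.

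The gap is at $N=2$. There the Pohozaev identity loses its gradient term, and---more seriously---the Gagliardo--Nirenberg bound on the critical piece becomes $\|f(w)\|_6^6\le C\|f(w)\|_2^{2}\|\nabla w\|_2^{4}$, i.e.\ gradient power $4$, not $2$. Hence your ``$\varepsilon$-critical'' term contributes $\varepsilon\,e^{2\lambda_n}$ rather than $\varepsilon\,e^{\lambda_n}$, and after dividing by $e^{\lambda_n}$ it \emph{blows up} instead of staying bounded by $K\varepsilon$; no contradiction follows. The same exponent issue already kills your linear barrier for $N=2$: repeating the crossing argument only yields $a_1(\lambda)\gtrsim e^{\lambda/2}$, not $e^{\lambda}$. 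Your closing remark about Lemma~\ref{LemA1}-(14) does not help here, because your argument performs no rescaling in which that lemma could intervene.

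The paper avoids this obstruction by a different, dimension-insensitive route. It rescales $v\mapsto w_\mu(\cdot)=\mu^{2/(p-2)}v(\mu^{(p-4)/(2p-4)}\cdot)$ and uses Lemma~\ref{LemA1}-(14),(15),(13) to obtain
\[
\frac{J_\mu(w_\mu)}{\mu}\ \ge\ \frac12\|\nabla v\|_2^2+\bigl(\kappa-C_\varepsilon\mu^{-1}\bigr)\|f(v)\|_2^2-\varepsilon\|f(v)\|_p^p\ =:\ J_{\kappa-C_\varepsilon\mu^{-1},\varepsilon}(v),
\]
so $a_1(\mu)/\mu\ge c(\kappa-C_\varepsilon\mu^{-1},\varepsilon)$, the mountain-pass level of the model functional. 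Two auxiliary claims---$c(A,\varepsilon)\to+\infty$ as $\varepsilon\to0$, and continuity of $c(\cdot,\varepsilon)$ in $A$---then yield $\liminf_{\mu\to\infty}a_1(\mu)/\mu\ge c(\kappa,\varepsilon)\to+\infty$. The point is that the $\|f(v)\|_p^p$ term is left intact in the comparison functional rather than estimated by Gagliardo--Nirenberg, so the bad $N=2$ exponent never appears; and Lemma~\ref{LemA1}-(14) is used essentially, precisely to compare $\|f(\mu^{2/(p-2)}v)\|_2^2$ with $\|f(v)\|_2^2$ after the non-homogeneous change of variables.
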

\begin{proof}
 We denote $e^\lambda$ by $\mu$ for  simplicity.  Since $0<a_1(\mu)\leq a_k(\mu)$ for each $k\in \mathbb{N}^\ast$, it is sufficient to show the conclusion for $k=1$.
Let $w_{r,s}(\cdot)=rv(s\cdot)$  for any $v\in H_r^1(\mathbb{R}^N)$,  by Lemma \ref{LemA1}, we have
\begin{equation}\aligned\label{eqBB15}
J_\mu(w_{r,s})&={1\over 2}\|\nabla
w_{r,s}\|_2^2+{\mu \over2}\|f(w_{r,s})\|_2^2-\int_{\mathbb{R}^N}G[f(w_{r,s})]\\
&={1\over 2}r^2s^{2-N}\|\nabla v\|_2^2+{\mu \over2}s^{-N}\|f(rv)\|_2^2-s^{-N}\int_{\mathbb{R}^N}G[f(rv)]\\
&\geq{1\over 2}r^2s^{2-N}\|\nabla v\|_2^2+\big({\mu \over2}-\varepsilon\big)s^{-N}\|f(rv)\|_2^2-C_\varepsilon s^{-N}r^{\frac{p}{2}}\|v\|_{\frac{p}{2}}^{\frac{p}{2}}\\
&\geq{1\over 2}r^2s^{2-N}\|\nabla v\|_2^2+Cr\big({\mu \over2}-C_\varepsilon\big)s^{-N}\|f(v)\|_2^2-\varepsilon s^{-N}r^{\frac{p}{2}}\|v\|_{\frac{p}{2}}^{\frac{p}{2}}\\
&\geq{1\over 2}r^2s^{2-N}\|\nabla v\|_2^2+\bigg[Cr\big({\mu \over2}-C_\varepsilon\big)s^{-N}-\varepsilon s^{-N}r^{\frac{p}{2}}\bigg]\|f(v)\|_2^2- C\varepsilon s^{-N}r^{\frac{p}{2}}\|f(v)\|_p^{p}.\\
\endaligned\end{equation}
Letting $w_\mu(\cdot)=\mu^{\frac{2}{p-2}}v\big(\mu^{\frac{p-4}{2p-4}}\cdot\big)$ and taking $r=\mu^{\frac{2}{p-2}}$ with $s=\mu^{\frac{p-4}{2p-4}}$, by \eqref{eqBB15}, there exists $\kappa>0$ such that
 \begin{equation}\label{eqBBB15}
\frac{J_\mu(w_\mu)}{\mu}\geq{1\over 2}\|\nabla
v\|_2^2+\big(\kappa-C_\varepsilon\mu^{-1} \big)\|f(v)\|_2^2-\varepsilon \|f(v)\|_{p}^{p}.
\end{equation}
For $A>0$, let $$J_{A,\varepsilon}(v)={1\over 2}\|\nabla
v\|_2^2+A\|f(v)\|_2^2-\varepsilon \|f(v)\|_{p}^{p}.$$
As in the proof of Theorem 1.1 of \cite{YangTang2019JMP}, there exists $v_{A,\varepsilon}\in H_r^1(\mathbb{R}^N)$ such that $J_{A,\varepsilon}(v_{A,\varepsilon})=c(A,\varepsilon)>0$ and  $J^\prime_{A,\varepsilon}(v_{A,\varepsilon})=0$, where  the minimax level is defined by
$$c(A,\varepsilon):=\inf\limits_{\gamma\in\Gamma_{A,\varepsilon}}\max
\limits_{t\in[0,1]}J_{A,\varepsilon}(\gamma(t)),$$
and $$\Gamma_{A,\varepsilon}=\{\gamma\in C\big([0,1],H_r^1(\mathbb{R}^N)\big):\gamma(0)=0, ~J_{A,\varepsilon}(\gamma(1))<0\}.$$

\textbf{Claim 1:} $\lim_{\varepsilon\rightarrow0}c(A,\varepsilon)=+\infty.$

Since
\begin{equation}\aligned\label{eqB115}
\|\nabla v_{A,\varepsilon}\|_2^2+A\|f(v_{A,\varepsilon})\|_2^2&\leq
\|\nabla v_{A,\varepsilon}\|_2^2+2A\int_{\mathbb{R}^N}f(v_{A,\varepsilon})f^{\prime}(v_{A,\varepsilon})v_{A,\varepsilon}\\
&=\varepsilon p\int_{\mathbb{R}^N}\big|f(v_{A,\varepsilon})\big|^{p-2}f(v_{A,\varepsilon})f^{\prime}(v_{A,\varepsilon})v_{A,\varepsilon}\\
&\leq C\varepsilon\|f(v_{A,\varepsilon}\|_2^2+C\varepsilon\|f(v_{A,\varepsilon})\|_{22^\ast}^{22^\ast}\\
&\leq C\varepsilon\big(\|\nabla v_{A,\varepsilon}\|_2^2+\|f(v_{A,\varepsilon})\|_2^2\big)
+C\varepsilon\bigg(\|\nabla v_{A,\varepsilon}\|_2^2+\|f(v_{A,\varepsilon})\|_2^2\bigg)^{\frac{2^*}{2}},
\endaligned\end{equation}
one has $\|\nabla v_{A,\varepsilon}\|_2^2+\|f(v_{A,\varepsilon})\|_2^2\geq C$ for samll $\varepsilon$. Moreover, \eqref{eqB115} leads to
$$\|\nabla v_{A,\varepsilon}\|_2^2+\|f(v_{A,\varepsilon})\|_2^2\rightarrow +\infty~~\hbox{as}~~\varepsilon\rightarrow0.$$
Consequently,
\begin{equation}\aligned\label{eqB116}
c(A,\varepsilon)&=J_{A,\varepsilon}(v_{A,\varepsilon})-\frac{2}{p}\langle J^\prime_{A,\varepsilon}(v_{A,\varepsilon}),v_{A,\varepsilon}\rangle\\
&\geq \bigg(1-\frac{2}{p}\bigg)\big(\|\nabla v_{A,\varepsilon}\|_2^2+A\|f(v_{A,\varepsilon})\|_2^2\big)\rightarrow+\infty~~\hbox{as}~~\varepsilon\rightarrow0.
\endaligned\end{equation}

\textbf{Claim 2:} $c(A,\varepsilon)$ is continuous with respect to $A$.

Without loss of generality, we may assume  $\varepsilon=1$.
For fixed $A$, we denote $J_\delta:=J_{A+\delta,1}$, so do $c_\delta$ and $\Gamma_{\delta}$.
Clearly, $c_\delta\leq c_0$ if $\delta<0$.
Suppose that $\delta_k\rightarrow 0^-$, we claim that $\lim_{k\rightarrow\infty}c_{\delta_k}=c_0$. Otherwise, up to a subsequence,
$\lim_{k\rightarrow\infty}c_{\delta_k}:=\underline{c}<c_0$, because $c(A,1)\leq c(B,1)$ if $A<B$.
Take $\{v_k\}\subset H_r^1(\mathbb{R}^N)$ with $\|v_k\|$=1 such that
$$\max\limits_{t\geq0}J_{\delta_k}(tv_k)\leq c_{\delta_k}+\frac{1}{k^2}. $$
Using the proof of  Lemma 2.5 in  \cite{YangTang2019JMP} and Proposition 3.11 in \cite{Rabinowitz1992ZAMP}, there exists $\gamma_k \in \Gamma_k$  with $\gamma_k(t)=\psi_k(t)v_k$ such that
$$\max\limits_{t\in[0,1]}J_{\delta_k}(\gamma_k(t))=\max\limits_{t\geq0}J_{\delta_k}(tv_k)\leq c_{\delta_k}+\frac{1}{k^2}.$$
According to Theorem $4.3$ of \cite{Willem1989book}, there exist $\{w_k\}\subset H_r^1(\mathbb{R}^N)$ and $\{t_k\}\subset[0,1]$ such that
\begin{equation}\aligned\label{eqBB150}
&\|w_k-\gamma_k(t_k)\|\leq\frac{1}{k},\\
&J_{\delta_k}(w_k) \in\big(c_{\delta_k}-\frac{1}{k},c_{\delta_k}+\frac{1}{k}\big),\\
&\|J^{\prime}_{\delta_k}(w_k)\|\leq \frac{1}{k}.
\endaligned\end{equation}
Let $\max\limits_{t\geq0}J_0(tv_k):=J_0(\varphi(v_k)v_k)$. Then
\begin{equation}\aligned\label{eqBB151}
c_0&\leq\max\limits_{t\geq0}J_0(tv_k)\\
&=J_0(\varphi(v_k)v_k)\\
&= J_{\delta_k}(\varphi(v_k)v_k)-\delta_k\|f[\varphi(v_k)v_k]\|_2^2\\
&\leq \max\limits_{t\in[0,1]}J_{\delta_k}(tv_k)-\delta_k\|f[\varphi(v_k)v_k]\|_2^2\\
&\leq c_{\delta_k}+\frac{1}{k^2}-\delta_k\|f[\varphi(v_k)v_k]\|_2^2.
\endaligned\end{equation}
Up to a subsequence, if  ${\varphi(v_k)}$ is bounded above,
by  $$\|f[\varphi(v_k)v_k]\|_2^2\leq \varphi^2(v_k)$$
and  \eqref{eqBB151},  a contradiction  will be  obtained.
Thus we may assume that $\varphi(v_k)\geq1$ for all $k$, by Lemma \ref{LemA1}, we have
\begin{equation*}\aligned
C_1\varphi^2(v_k)&\geq\varphi^2(v_k)\|\nabla v_k\|_2^2+2A\int_{\mathbb{R}^N}f[\varphi(v_k)v_k]f^{\prime}[\varphi(v_k)v_k]\varphi(v_k)v_k\\
&= p\int_{\mathbb{R}^N}\big|f[\varphi(v_k)v_k]\big|^{p-2}f[\varphi(v_k)v_k]f^{\prime}[\varphi(v_k)v_k]\varphi(v_k)v_k\\
&\geq C_2(\varphi(v_k))^{\frac{p}{2}}\|v_k\|_{\frac{p}{2}}^{\frac{p}{2}},
\endaligned\end{equation*}
which leads to
 $$\varphi(v_k)\leq \|v_k\|_{\frac{p}{2}}^{\frac{p}{8-2p}}.$$
If, along a subsequence, $\|v_k\|_{\frac{p}{2}}\geq C>0$, then we get upper bound for ${\varphi(v_k)}$. Otherwise,
$$v_k\rightarrow0~~\hbox{in}~~L^{\frac{p}{2}}(\mathbb{R}^N).$$
We claim that there exist $R,\delta>0$ and $\{y_k\}\subset \mathbb{R}^N$ such that
 \begin{equation*}\liminf_{n\rightarrow\infty}\int_{B_R(y_k)}
 |w_k|^2\geq\delta.\end{equation*}
If not, for any $r>0$
$$\lim_{k\rightarrow\infty}\sup_{y\in \mathbb{R}^N}\int_{B_r(y)}|w_k|^2=0.$$  By \eqref{eqBB150},
$\{w_k\}$ is bounded in $ H_r^1(\mathbb{R}^N)$. It follows from Lemma 1.21 of \cite{Willem1996book} and its proof that $v_k\rightarrow 0$ in  $L^s(\mathbb{R}^N)$, where $2\leq s<2^*$. Consequently,
 \begin{align*}
0<c_0\leq c_{\delta_k}&=\lim_{k\rightarrow\infty}\bigg(J_{\delta_k}(w_k)-\langle J^{\prime}_{\delta_k}(w_k),w_k\rangle\bigg)\\
&\leq p\int_{\mathbb{R}^N}|f(w_k)|^{p-2}f(w_k)f^{\prime}(w_k)w_k+\|f(w_k)\|_p^p\\
&\leq C\|w_k\|_{\frac{p}{2}}^{\frac{p}{2}}\rightarrow0,
 \end{align*}
a contradiction. In view of  $$\psi_k(t)=\|\psi_k(t)v_k\|\leq\|w_k-\psi_k(t)v_k\|+\|w_k\|\leq C,$$  we have for large $k$,
 \begin{align*}
\|v_k\|_2&\geq\|v_k\|_{2,B_R(y_k)}\\
&\geq {\psi_k(t)}^{-1}\big(\|w_k\|_{2,B_R(y_k)}-\|w_k-\psi_k(t)v_k\|_{2,B_R(y_k)}\big)\\
&\geq C^{-1}\big(\delta-\frac{1}{k}\big)\\
&\geq \frac{\delta}{2C},
\end{align*}
a contradiction.

Suppose that $\delta_k\rightarrow 0^+$. If, up to a subsequence,
$\lim_{k\rightarrow\infty}c_{\delta_k}:=\overline{c}>c_0$.
Take $\{v_k\}\subset H_r^1(\mathbb{R}^N)$ with $\|v_k\|$=1 such that
$$\max\limits_{t\geq0}J_0(tv_k)\leq c_0+\frac{1}{k^2}. $$
Observe that
\begin{equation}\aligned\label{eqBB151}
c_{\delta_k}&\leq\max\limits_{t\geq0}J_{\delta_k}(tv_k)\\
&=J_{\delta_k}(\varphi_{\delta_k}(v_k)v_k)\\
&= J_0(\varphi_{\delta_k}(v_k)v_k)+\delta_k\|f[\varphi_{\delta_k}(v_k)v_k]\|_2^2\\
&\leq \max\limits_{t\in[0,1]}J_0(tv_k)+\delta_k\|f[\varphi_{\delta_k}(v_k)v_k]\|_2^2\\
&\leq c_0+\frac{1}{k^2}+\delta_k\|f[\varphi_{\delta_k}(v_k)v_k]\|_2^2\\
&\leq c_0+\frac{1}{k^2}+\delta_k\varphi_{\delta_k}(v_k)^2,
\endaligned\end{equation}
where  $\max\limits_{t\geq0}J_{\delta_k}(tv_k):=J_{\delta_k}(\varphi_{\delta_k}(v_k)v_k)$.
Similarly, we can obtain a contradiction by proving that  $\{\varphi_{\delta_k}(v_k)\}$ is  bounded above.

For any $\gamma\in\Gamma_1(\mu)$ and  $\xi\in [0,1]$, let $$\eta(\xi)(\cdot)=\mu^{-\frac{2}{p-2}}\gamma(\xi)\big(\mu^{-\frac{p-4}{2p-4}}\cdot\big).$$
 In view of \eqref{eqBBB15},  we get $\eta\in\Gamma_{\kappa-C_\varepsilon\mu^{-1} ,\varepsilon}$ and
\begin{align*}
\max_{\xi\in[0,1]}J_\mu(\gamma(\xi))=\max_{\xi\in [0,1]}J_\mu({\eta(\xi)}_\mu)\geq \mu \max_{\xi\in [0,1]}J_{\kappa-C_\varepsilon\mu^{-1},\varepsilon}(\eta(\xi))\geq \mu c\big(\kappa-C_\varepsilon\mu^{-1},\varepsilon\big),
\end{align*}
 which implies that $$\frac{a_1(\mu)}{\mu}\geq c\big(\kappa-C_\varepsilon\mu^{-1},\varepsilon\big).$$  It follows claim $2$ that
$$\lim\limits_{\mu\rightarrow+\infty}\frac{a_1(\mu)}{\mu}\geq \lim\limits_{\mu\rightarrow+\infty}c\big(\kappa-C_\varepsilon\mu^{-1},\varepsilon\big)=c\big(\kappa,\varepsilon\big).$$
Since $\varepsilon>0$ is arbitrary, by  claim $1$, we have
$$\lim\limits_{\mu\rightarrow+\infty}\frac{a_1(\mu)}{\mu}=+\infty.$$
\end{proof}
\subsection{Construction of Pohozaev Mountain and Minimax level }
Following \cite{CingolaniTanaka2022CVPDE}, we introduce the Pohozaev level set
$$\Omega:=\{(\lambda,v)\in\mathbb{R}\times H_r^1(\mathbb{R}^N):P(\lambda,v)>0\}\cup\{(\lambda,0):\lambda\in\mathbb{R}\}.$$
As in the proof of \eqref{eqBbb1}, we have $P(\lambda,v)>0$  for  $v\in  \partial \Sigma_\rho$ if $\rho$ is small. Hence $\{(\lambda,0):\lambda\in\mathbb{R}\}\subset \hbox{int}(\Omega)$, and hence
$$\partial\Omega=\{(\lambda,v)\in\mathbb{R}\times H_r^1(\mathbb{R}^N):P(\lambda,v)=0,v\neq0\}$$
Combining  Theorem $1.1$ in \cite{LZL2022Nonlinearity} and a Pohozaev identity given in Lemma $3.1$ in \cite{JeanjeanNonlinearity2010}, we have $\partial\Omega\neq\emptyset$. We also remark that nonlinearity  odd is required and thus the existence of infinitely many solutions can be obtain in \cite{LZL2022Nonlinearity}. However, we can still prove the existence of  solutions if we remove the  odd  hypothesis.
\begin{lemma}\label{LemA3}The following statements hold:
\begin{itemize}
\item[$(1)$] $J_\lambda(v)\geq0$ for all $(\lambda,v)\in \Omega$.
\item[$(2)$] $J_\lambda(v)\geq a_1(\lambda)>0$ for all $(\lambda,v)\in \partial\Omega.$
\item[$(3)$] $E_m\geq B_m>-\infty$, where $$E_m:=\inf_{(\lambda,v)\in\partial\Omega}J^m(\lambda,v)~~\hbox{and}~~B_m:=\inf_{\lambda\in(-\infty,\lambda_0)}\bigg(a_1(\lambda)-{ e^\lambda  \over2}m\bigg).$$
\end{itemize}
\end{lemma}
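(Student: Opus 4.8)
The plan is to base all three parts on the single algebraic identity
$$N\,J_\lambda(v)=P(\lambda,v)+\|\nabla v\|_2^2,$$
which follows at once by subtracting the definitions of $J_\lambda$ and $P$ (the coefficient of $\|\nabla v\|_2^2$ is $\tfrac N2-\tfrac{N-2}{2}=1$ while the remaining terms cancel), and which holds for every $(\lambda,v)$ and every $N\ge2$. For (1), if $v=0$ then $J_\lambda(0)=0$, whereas if $P(\lambda,v)>0$ the right-hand side is strictly positive, so $J_\lambda(v)>0$; hence $J_\lambda\ge0$ on all of $\Omega$.

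For (2), I would first record $a_1(\lambda)>0$ from the mountain-pass geometry already established in \eqref{eqBbb1}: for $\rho$ small one has $J_\lambda\ge0$ on $\Sigma_\rho$ and $J_\lambda\ge\alpha_\rho>0$ on $\partial\Sigma_\rho$. Any $\gamma\in\Gamma_1(\lambda)$ (nonempty by Lemma \ref{LemAA0}) satisfies $\gamma(0)=0\in\mathrm{int}\,\Sigma_\rho$ and $J_\lambda(\gamma(1))<0$, so $\gamma(1)\notin\Sigma_\rho$; continuity of $v\mapsto\int_{\mathbb R^N}(|\nabla v|^2+f^2(v))$ forces $\gamma$ to cross $\partial\Sigma_\rho$, whence $\max_\xi J_\lambda(\gamma(\xi))\ge\alpha_\rho$ and $a_1(\lambda)\ge\alpha_\rho>0$. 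The substantive claim is $J_\lambda(v)\ge a_1(\lambda)$ on $\partial\Omega$, which I would prove by exhibiting a competitor path. Writing $B=\tfrac{e^\lambda}{2}\|f(v)\|_2^2-\int_{\mathbb R^N}G[f(v)]$, along the dilation $v_t(x)=v(x/t)$ one has $J_\lambda(v_t)=\tfrac12 t^{N-2}\|\nabla v\|_2^2+t^N B$, and on $\partial\Omega$ the constraint $P(\lambda,v)=0$ gives $B=-\tfrac{N-2}{2N}\|\nabla v\|_2^2$. For $N\ge3$ this $B<0$, so $t\mapsto J_\lambda(v_t)$ tends to $0$ as $t\to0^+$ (indeed $v_t\to0$ in $H^1$), attains its unique maximum at $t=1$ (where the $t$-derivative equals $P(\lambda,v)=0$) with value $J_\lambda(v)$, and tends to $-\infty$ as $t\to\infty$; reparametrising $t\in(0,T]$ to $\xi\in(0,1]$ with $T$ large and $\gamma(0)=0$ produces $\gamma\in\Gamma_1(\lambda)$ with $\max_\xi J_\lambda(\gamma(\xi))=J_\lambda(v)$, hence $a_1(\lambda)\le J_\lambda(v)$.

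The main obstacle is the case $N=2$: there $B=0$ on $\partial\Omega$, so the dilation path has constant energy $\tfrac12\|\nabla v\|_2^2$ and, since $\|\nabla v_t\|_2$ is scale invariant, it neither descends below $J_\lambda(v)$ nor connects to the origin. The pure scaling argument therefore degenerates, and one must instead invoke the Pohozaev-mountain construction borrowed from \cite{CingolaniTanaka2022CVPDE}, which produces an admissible path whose maximal energy still does not exceed $J_\lambda(v)$; this is where the bulk of the work lies.

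For (3), part (2) gives $J^m(\lambda,v)=J_\lambda(v)-\tfrac{e^\lambda}{2}m\ge a_1(\lambda)-\tfrac{e^\lambda}{2}m$ for every $(\lambda,v)\in\partial\Omega$. Moreover $P(\lambda,v)=0$ with $v\ne0$ forces $\lambda<\lambda_0$: combining $\tfrac{e^\lambda}{2}\|f(v)\|_2^2\le\int_{\mathbb R^N}G[f(v)]$ with $\int_{\mathbb R^N}G[f(v)]\le\big(\sup_{s\neq0}\tfrac{G(s)}{s^2}\big)\|f(v)\|_2^2$ yields $e^\lambda\le2\sup_{s\neq0}\tfrac{G(s)}{s^2}=e^{\lambda_0}$. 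Taking the infimum over $\partial\Omega$ then gives $E_m\ge\inf_{\lambda<\lambda_0}\big(a_1(\lambda)-\tfrac{e^\lambda}{2}m\big)=B_m$. Finally $B_m>-\infty$: since $a_1(\lambda)\ge0$, if $\lambda_0<\infty$ then $a_1(\lambda)-\tfrac{e^\lambda}{2}m\ge-\tfrac{e^{\lambda_0}}{2}m$ for all $\lambda<\lambda_0$; if $\lambda_0=\infty$, Lemma \ref{LemAA2} gives $a_1(\lambda)/e^\lambda\to+\infty$, so the expression is positive for $\lambda$ large and bounded below by $-\tfrac{e^{\Lambda}}{2}m$ for $\lambda\le\Lambda$, proving finiteness in either case.
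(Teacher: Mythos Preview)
Your approach matches the paper's almost line for line: the identity $NJ_\lambda(v)=P(\lambda,v)+\|\nabla v\|_2^2$ for (1), the dilation $t\mapsto v(\cdot/t)$ for (2), and the direct consequence plus Lemma~\ref{LemAA2} for (3). Your handling of $B_m>-\infty$ is in fact more careful than the paper's, since Lemma~\ref{LemAA2} assumes $\lambda_0=\infty$ and you separately dispose of the case $\lambda_0<\infty$. One ordering point: the paper establishes $\lambda<\lambda_0$ at the \emph{start} of (2), before using $a_1(\lambda)$ (which is only defined on $(-\infty,\lambda_0)$); you defer this to (3), so your part (2) tacitly invokes a fact proved later and should be reordered.

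Your diagnosis of the $N=2$ obstacle is accurate, and worth highlighting: the paper's own proof has precisely this gap. It writes $0<\tfrac{N-2}{2}\|\nabla v\|_2^2$ and asserts $J_\lambda(v(\cdot/t))\to-\infty$, both of which fail when $N=2$ (there $B=0$ and the dilation energy is constant, exactly as you say). The paper does not treat this case in the proof of Lemma~\ref{LemA3}; it does handle the analogous issue later, in the proof of Theorem~\ref{Thm1.10}, by perturbing $v_\ast$ to $t_\ast v_\ast$ with $t_\ast>1$, but that argument relies on the Euler--Lagrange equation satisfied by the critical point $v_\ast$, which is unavailable for a generic $v\in\partial\Omega$. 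So your deferral to the construction in \cite{CingolaniTanaka2022CVPDE} is reasonable, but as written neither your proposal nor the paper supplies a complete argument for (2) when $N=2$.
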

\begin{proof}
Note that
$$J_\lambda(v)\geq J_\lambda(v)-\frac{1}{N}P(\lambda,v)=\frac{1}{N}\|\nabla v\|_2^2\geq0$$
and $(1)$ follows.

For any $(\lambda,v)\in \partial\Omega$, we claim that  $\lambda<\lambda_0.$  Otherwise, by the definition of $\lambda_0$, we have
$$G(s)-{ Ne^\lambda \over2}s^2\leq0~~\hbox{for}~~\hbox{all}~~s\in\mathbb{R}.$$
This leads to
\begin{align*}
0=P(\lambda,v)={N-2\over 2}\|\nabla v\|_2^2-N\bigg(\int_{\mathbb{R}^N}G[f(v)]-{e^\lambda  \over2}\|f(v)\|^2\bigg)\geq{N-2\over 2}\|\nabla v\|_2^2>0,
\end{align*}
a contradiction.
For any $(\lambda,v)\in \partial\Omega$, we have
\begin{align*}
0<{N-2\over 2}\|\nabla v\|_2^2=N\bigg(\int_{\mathbb{R}^N}G[f(v)]-{e^\lambda  \over2}\|f(v)\|_2^2\bigg).
\end{align*}
 Set $h(t)=\frac{d}{dt}\big(J_\lambda(v(\cdot/t))\big)$. It follows from
$$h^\prime(t)=\frac{N-2}{2}t^{(N-2)}\|\nabla v\|_2^2(1-t^2)$$
that
 $$\max_{t\geq0}J_\lambda(v(\cdot/t))=J_\lambda(v).$$
Since $J_\lambda(v(\cdot/t)\rightarrow-\infty$ as $t\rightarrow+\infty$, there exists $t_0>0$ such that $J_\lambda(v(\cdot/t_0))<0$. Let
\begin{equation*}
\gamma(t)=
\begin{cases}
v(\cdot/tt_0),~~~\hbox{if}~~0<t\leq1,\\
0,~~~~~~~~~~~\hbox{if}~~t=0.
\end{cases}
\end{equation*}
Clearly,   $\gamma\in{\Gamma}_1(\lambda)$ and
$${J_\lambda}(v)=\max_{\xi \in [0,1]}J_\lambda(\gamma(t))\geq a_1(\lambda).$$
Passing to the infimum over ${\Gamma}_1(\lambda)$, $(2)$ follows.
Hence $E_m\geq B_m$, while the fact that $B_m>-\infty$ follows from Lemma \ref{LemAA2}.
\end{proof}

Next, let us define a family of minimax values as follows.

\begin{Def}
For any $k=1$, set
$$b^m_1=\inf_{\Upsilon \in \Gamma^m_1}\max_{\xi \in [0,1]}J^m(\gamma(\xi)),$$ where
$$\Gamma^m_1=\{\Upsilon=(\Upsilon_1,\Upsilon_2)\in C([0,1],\mathbb{R}\times H_r^1(\mathbb{R}^N)):\Upsilon~\hbox{satisfies}~(\rm{\expandafter{\romannumeral1}})-(\rm{\expandafter{\romannumeral2}})~~\hbox{below}\}.$$
\begin{itemize}
\item[\rm{(\expandafter{\romannumeral1})}]   $J^m(\Upsilon(0))\leq B_m-1$.
\item[\rm{(\expandafter{\romannumeral2})}]  $\Upsilon(1)\notin \Omega$ and $J^m(\Upsilon(1))\leq B_m-1$.
\end{itemize}
For any $k \geq2$, set
$$b^m_k=\inf_{\Upsilon \in \Gamma^m_k}\max_{\xi \in D_k}J^m(\gamma(\xi)),$$ where
$$\Gamma^m_k=\{\Upsilon=(\Upsilon_1,\Upsilon_2)\in C(D_k,\mathbb{R}\times H_r^1(\mathbb{R}^N)):\Upsilon~\hbox{satisfies}~(\rm{\expandafter{\romannumeral1}})-(\rm{\expandafter{\romannumeral3}})~~\hbox{below}\}.$$
\begin{itemize}
 \item[\rm{(\expandafter{\romannumeral1})}]  $\Upsilon\in\Gamma^m_k$ to be $\mathbb{Z}_2$-equivariant, that is $\Upsilon_1(-\xi)=\Upsilon_1(\xi)$, $\Upsilon_2(-\xi)=-\Upsilon_2(\xi)$ for all $\xi\in D_k$.
\item[\rm{(\expandafter{\romannumeral2})}]   $J^m(\Upsilon(0))\leq B_m-1$.
\item[\rm{(\expandafter{\romannumeral3})}]  $\Upsilon|_{\partial D_k}\cap \Omega=\emptyset$ and $J^m(\Upsilon|_{\partial D_k})\leq B_m-1$.
\end{itemize}
\end{Def}

\begin{lemma}\label{LemA4}
  For any $k\in\mathbb{N}$, $\Gamma^m_k\neq\emptyset$. Moreover, $B_m\leq b^m_k<0$  if $m>m_k$.
\end{lemma}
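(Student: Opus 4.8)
The plan is to break the statement into its three assertions---that each class $\Gamma^m_k$ is nonempty, that $b^m_k\ge B_m$, and that $b^m_k<0$ once $m>m_k$---and to organize everything around the identity $J^m(\lambda,v)=J_\lambda(v)-\tfrac{e^\lambda}{2}m$ together with the fact, recorded in Lemma \ref{LemA3}, that $J^m\ge B_m$ on $\partial\Omega$. Indeed, for $(\lambda,v)\in\partial\Omega$ one has $\lambda<\lambda_0$ and $J_\lambda(v)\ge a_1(\lambda)$, so $J^m(\lambda,v)=J_\lambda(v)-\tfrac{e^\lambda}{2}m\ge a_1(\lambda)-\tfrac{e^\lambda}{2}m\ge B_m$. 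Thus the Pohozaev set $\partial\Omega$ serves as a mountain at height $\ge B_m$, and the lower bound reduces to showing that the image of every admissible map meets $\partial\Omega$.

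For the lower bound I would argue that any $\Upsilon\in\Gamma^m_k$ has image intersecting $\partial\Omega$. For $k\ge2$ the equivariance $\Upsilon_2(-\xi)=-\Upsilon_2(\xi)$ forces $\Upsilon_2(0)=0$, so $\Upsilon(0)=(\Upsilon_1(0),0)$ sits on the axis $\{(\lambda,0)\}\subset\mathrm{int}(\Omega)$, while $\Upsilon|_{\partial D_k}$ avoids $\Omega$; restricting $\Upsilon$ to any radius of $D_k$ then gives a continuous arc from $\mathrm{int}(\Omega)$ to the complement of $\Omega$, which must hit $\partial\Omega$ (the hitting point has $v\ne0$ since axis points are interior). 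For $k=1$ the path runs from a point of $\Omega$ to $\Upsilon(1)\notin\Omega$ and crosses $\partial\Omega$ in the same manner. In either case $\max J^m(\Upsilon)\ge B_m$, whence $b^m_k\ge B_m$.

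Nonemptiness and the upper bound I would treat together, using one sign observation. If $\gamma\in\Gamma_k(\lambda)$, then $J_\lambda(\gamma(\xi))<0$ on $\partial D_k$ already forces $P(\lambda,\gamma(\xi))<0$ there: writing $B=\tfrac{e^\lambda}{2}\|f(\gamma)\|_2^2-\int G[f(\gamma)]$, the inequality $\tfrac12\|\nabla\gamma\|_2^2+B<0$ gives $B<-\tfrac12\|\nabla\gamma\|_2^2$, so $P=\tfrac{N-2}{2}\|\nabla\gamma\|_2^2+NB<-\|\nabla\gamma\|_2^2<0$. Hence $\gamma|_{\partial D_k}$ lies automatically outside $\Omega$; moreover for $t\ge1$ the dilation $\gamma(\xi)(\cdot/t)$ stays outside $\Omega$ (again $P<0$) while $t\mapsto J_\lambda(\gamma(\xi)(\cdot/t))$ is strictly decreasing, so dilating by large $t$ drives $J^m$ to $-\infty$ on $\partial D_k$ without ever raising the energy. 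Combining Lemma \ref{LemAA0} (to produce $\gamma\in\Gamma_k(\lambda_*)$ for a fixed $\lambda_*<\lambda_0$), a boundary dilation pushing $J^m\le B_m-1$ on $\partial D_k$, and a short segment along the axis near the center (letting $\Upsilon_1$ increase to some large $\Lambda$ so that $J^m(\Upsilon(0))=-\tfrac{e^\Lambda}{2}m\le B_m-1$) yields an element of $\Gamma^m_k$, proving $\Gamma^m_k\ne\emptyset$.

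For the strict upper bound, $m>m_k=2\inf_\lambda a_k(\lambda)/e^\lambda$ lets me select $\lambda_*<\lambda_0$ with $a_k(\lambda_*)<\tfrac{e^{\lambda_*}}{2}m$ and then $\gamma\in\Gamma_k(\lambda_*)$ with $\max_{D_k}J_{\lambda_*}(\gamma)<\tfrac{e^{\lambda_*}}{2}m$. Along the core family $(\lambda_*,\gamma(\xi))$ one gets $J^m=J_{\lambda_*}(\gamma)-\tfrac{e^{\lambda_*}}{2}m<0$; the boundary dilation only lowers the energy, and along the inserted axis segment $\|f(\Upsilon_2)\|_2^2<m$ keeps $J^m<0$, so the modified $\Upsilon\in\Gamma^m_k$ still satisfies $\max J^m(\Upsilon)<0$, giving $b^m_k\le\max J^m(\Upsilon)<0$. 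The main obstacle is precisely this last construction: performing the boundary dilation and the axis insertion while simultaneously respecting the $\mathbb{Z}_2$-equivariance, the continuity at the center, and the requirement that the maximum stay below $0$. The two sign facts above---that $\partial D_k$ lands automatically outside $\Omega$, and that dilation past $t=1$ decreases the energy---are what make this gluing possible and are the crux of the argument.
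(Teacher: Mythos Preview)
Your approach is essentially the same as the paper's: both obtain the lower bound by showing every admissible map must cross $\partial\Omega$ (where $J^m\ge B_m$ by Lemma~\ref{LemA3}), and both construct an element of $\Gamma^m_k$ by splicing a path $\gamma\in\Gamma_k(\lambda)$ with a boundary dilation and an axis segment, then read off $b^m_k\le a_k(\lambda)-\tfrac{e^\lambda}{2}m$. Your explicit observation that $J_\lambda(v)<0$ already forces $P(\lambda,v)<0$, and that dilation by $t\ge1$ then strictly decreases $J_\lambda$, is a clean shortcut---the paper instead checks $P<0$ only after a large dilation and simply asserts $\max_{D_k}J_\lambda(\gamma_0)=\max_{D_k}J_\lambda(\gamma)$ without this monotonicity. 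One small caution: for $k=1$ the definition of $\Gamma^m_1$ does not require $\Upsilon(0)\in\Omega$, so your claim that ``the path runs from a point of $\Omega$'' needs an extra word of justification; the paper glosses over this too, writing only ``the case $k=1$ is similar.''
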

\begin{proof}
Here we only prove the case for $k\geq2$, since the case $k=1$ is similar.
In order to prove $B_m\leq b^m_k$, it suffices to prove that any path in  $ \Gamma^m_k$ passes through $\partial\Omega$.  In fact, for all $\Upsilon \in \Gamma^m_k$, there exist $\xi_1 \in \hbox{int}~(D_k)$ and $\xi_2\in\hbox{int}~(D_k)$ such that $P(\Upsilon(\xi_1))>0$  and $P(\Upsilon(\xi_2))<0$. Let $$\alpha:=\sup\{t\in[0,1]:\chi(t)>0\},~~\beta:=\inf\{t\in[0,1]:\chi(t)<0\},$$ where $\chi(t)=P(\Upsilon(t\xi_1+(1-t)\xi_2)).$ Clearly, $0<\alpha\leq\beta<1$ and $\chi(\alpha)=\chi(\beta)=0$.  We divide the proof into two cases as follows.

\textbf{Case 1:} If $\alpha=\beta$, for small $\delta>0$, by the definition of $\alpha$ and $\beta$,  there exist $t_1, t_2\in (\alpha-\delta,\alpha+\delta)$ such that $\chi(t_1)>0$ and
$\chi(t_2)<0$. By the fact that $\{(\lambda,0)|\lambda\in\mathbb{R}\}\subset \hbox{int}(\Omega)$, we have $\Upsilon_2(\alpha\xi_1+(1-\alpha)\xi_2))\neq0$, and hence $\Upsilon(\alpha\xi_1+(1-\alpha)\xi_2))\in \partial\Omega$.

\textbf{Case 2:} If $\alpha<\beta$, for small $\delta>0$, there exist $t_1\in (\alpha-\delta,\alpha)$ such that $\chi(t_1)>0$ and
$\chi(t)\equiv0$ for $t\in[\alpha,\beta]$. Thus  $\Upsilon(\alpha\xi_1+(1-\alpha)\xi_2))\in \partial\Omega$.

 Fix $\lambda\in(-\infty,\lambda_0)$, for any $\gamma\in\Gamma_k(\lambda)$ and $\xi\in \partial D_k$,  we have
\begin{align*}
J_\lambda\big(\gamma(\xi)(L^{-1}\cdot)\big)&={1\over 2}L^{N-2}\|\nabla
 \gamma(\xi)\|_2^2+{ e^\lambda  \over2}L^N\|f(\gamma(\xi))\|_2^2
-L^N\int_{\mathbb{R}^N}G[f(\gamma(\xi))]\\
&\leq{1\over 2}L^{N}\|\nabla\gamma(\xi)\|_2^2+{ e^\lambda  \over2}L^N\|f(\gamma(\xi))\|_2^2
-L^N\int_{\mathbb{R}^N}G[f(\gamma(\xi))]
\\
&\leq L^N M\rightarrow -\infty,~~~~~~~~~~~~~~~~~~~~\hbox{as}~~L\rightarrow\infty,
\end{align*}
where $M=\max\limits_{\xi\in\partial D_k}J_\lambda(\gamma(\xi))<0$. Consequently, $\lim\limits_{L\rightarrow\infty}J_\lambda\big(\gamma(\xi)(L^{-1}\cdot)\big)=-\infty$ uniformly  for $\xi\in \partial D_k$.
Taking large $L>0$ and setting
\begin{equation*}
\gamma_0(\xi)(\cdot)=
\begin{cases}
 \gamma(2\xi)(\cdot)~~~~~~~~~~~~~~~~~~~~~~~~~~~~~\hbox{if}~~|\xi|\in[0,\frac{1}{2}],\\
 \gamma\big(\frac{\xi}{|\xi|}\big)\big(((2|\xi|-1)L+1)^{-1}\cdot\big)~~\hbox{if}~~|\xi|\in(\frac{1}{2},1],
\end{cases}
\end{equation*}
then $\gamma_0\in\Gamma_k(\lambda)$. Moreover,
 \begin{equation}\label{eqB7} \max_{\xi\in D_k}J_\lambda(\gamma_0(\xi))=\max_{\xi\in D_k}J_\lambda(\gamma(\xi)),~~\max_{\xi\in \partial D_k}J_\lambda(\gamma_0(\xi))\leq B_m-1. \end{equation}
Now, let us define $\Upsilon=(\Upsilon_1,\Upsilon_2)$ as follows:
\begin{equation*}
\Upsilon_1(\xi)=
\begin{cases}
 \lambda+R(1-2|\xi|)~~~\hbox{if}~~|\xi|\in[0,\frac{1}{2}],\\
 \lambda~~~~~~~~~~~~~~~~~~~~~\hbox{if}~~|\xi|\in(\frac{1}{2},1],
\end{cases}
\end{equation*}
\begin{equation*}
\Upsilon_2(\xi)=
\begin{cases}
 0~~~~~~~~~~~~~~~~~~~~~\hbox{if}~~|\xi|\in[0,\frac{1}{2}],\\
 \gamma_0\big(\frac{\xi}{|\xi|}(2|\xi|-1)\big)~~\hbox{if}~~|\xi|\in(\frac{1}{2},1].
\end{cases}
\end{equation*}
It is easy to see that
$$J^m(\Upsilon(\xi))=-\frac{e^{\lambda+R(1-2|\xi|)}}{2}m~~\hbox{for}~~|\xi|\in[0,\frac{1}{2}],$$
$$J^m(\Upsilon(\xi))=J^m(\lambda,\Upsilon_2(\xi))= J_\lambda(\Upsilon_2(\xi))-\frac{e^{\lambda}}{2}m~~\hbox{for}~~|\xi|\in(\frac{1}{2},1].$$
By choosing a large $R>0$, we have  $J^m(\Upsilon(0))\leq B_m-1$. If $\xi\in \partial D_k$,  by \eqref{eqB7},  we have for  large $L$
$$J^m(\Upsilon(\xi))=J_\lambda(\gamma_0(\xi))-\frac{e^{\lambda}}{2}m\leq B_m-1$$
and $$P(\Upsilon(\xi))=P(\lambda,\gamma_0(\xi))=P\big(\lambda,\gamma_0(\xi)(L+1)^{-1}\cdot)\big)<0.$$
 Consequently, $\Upsilon \in \Gamma_k^m$. By \eqref{eqB7}, we have
\begin{align*}
b^m_k&\leq\max_{\xi\in D_j}J^m(\Upsilon(\xi))\\
&=\max_{|\xi|\in(\frac{1}{2},1]}J^m(\Upsilon(\xi))\\
&=\max_{\xi\in D_k}J_\lambda(\gamma_0(\xi))-\frac{e^{\lambda}}{2}m\\
&=\max_{\xi\in D_k}J_\lambda(\gamma(\xi))-\frac{e^{\lambda}}{2}m.
\end{align*}
Since $\gamma$ is arbitrary, for any $\lambda \in (-\infty,\lambda_0),$ \begin{equation}\label{eq213}b^m_k\leq a_k(\lambda)-\frac{e^{\lambda}}{2}m.\end{equation}  Hence
$$\inf_{\lambda(-\infty,\lambda_0)}\frac{b^m_k}{e^{\lambda}}\leq \inf_{\lambda\in(-\infty,\lambda_0)}\frac{a_k(\lambda)}{e^{\lambda}}-\frac{m}{2}=\frac{m_k}{2}-\frac{m}{2}<0,$$
and hence $b^m_k<0$. This completes the proof of Lemma \ref{LemA4}. \end{proof}

In order to study  the multiplicity of radial  for equation $(P_{\mu,m})$, let us define another family of minimax values as follows:
\begin{Def} For $k\geq2$, we set
$$c_k^m=\inf_{A\in \Lambda^m_k}\max_{(\lambda,v)\in A}J^m(\lambda,v),$$ where
$$\Lambda_k^m=\{\Upsilon(\overline{D_{k+j}\setminus Y}):j\in\mathbb{N},\Upsilon \in\Gamma^m_{k+j},Y\subset D_{k+j}\setminus\{0\}~~\hbox{is}~~\hbox{closed},~-Y=Y,~\hbox{and}~~\hbox{genus}(Y)\leq j\}.$$
\end{Def}
 Let $P_2: \mathbb{R}\times H_r^1(\mathbb{R}^N)\rightarrow H_r^1(\mathbb{R}^N)$ be a projection  defined by
 $$P_2(\lambda,v)=v~~~
\hbox{for}~~(\lambda,v)\in \mathbb{R}\times H_r^1(\mathbb{R}^N).$$

\begin{lemma}\label{LemA6}We have the following results:
\begin{itemize}
\item[(\expandafter{\romannumeral1})] $\Lambda^m_k\neq \emptyset.$
\item[(\expandafter{\romannumeral2})]$\Lambda^m_k\subset \Lambda^m_{k+1}$, and thus $c^m_k\leq c^m_{k+1}$.
\item[(\expandafter{\romannumeral3})]$c^m_k\leq b^m_k.$
\item[(\expandafter{\romannumeral4})] Let $A\in \Lambda^m_k$ and $Z\subset\mathbb{R}\times H_r^1(\mathbb{R}^N)$ be $\mathbb{Z}_2$-invariant, closed, and such that $0\notin \overline{P_2(Z)}$ and $\hbox{genus}\overline{P_2(Z)}\leq i<n$. Then $\overline{A\setminus Z}\in\Lambda^m_{k-i}.$
\end{itemize}
\end{lemma}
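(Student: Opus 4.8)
The plan is to verify the four assertions in the order (\expandafter{\romannumeral1})--(\expandafter{\romannumeral4}), leaning on the fact that $\Gamma_{k+j}^m\neq\emptyset$ (Lemma \ref{LemA4}) and on the standard properties of the Krasnoselskii genus. For (\expandafter{\romannumeral1}), I would simply take $j=0$ and $Y=\emptyset$: then $\overline{D_k\setminus\emptyset}=D_k$ and any $\Upsilon\in\Gamma_k^m$ gives $\Upsilon(D_k)\in\Lambda_k^m$, so the family is nonempty because $\Gamma_k^m\neq\emptyset$. For (\expandafter{\romannumeral2}), given $A=\Upsilon(\overline{D_{k+j}\setminus Y})\in\Lambda_k^m$ with $\Upsilon\in\Gamma_{k+j}^m$ and $\mathrm{genus}(Y)\leq j$, I would reinterpret the same set as an element of $\Lambda_{k+1}^m$ by writing $k+j=(k+1)+(j-1)$; one checks that the constraint $\mathrm{genus}(Y)\leq j=(j-1)+1$ is exactly what the membership in $\Lambda_{k+1}^m$ requires (here I must track the index bookkeeping carefully, but it is purely formal). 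The monotonicity $c_k^m\leq c_{k+1}^m$ then follows because the infimum over the larger family $\Lambda_{k+1}^m\supset\Lambda_k^m$ is no larger.

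For (\expandafter{\romannumeral3}), the comparison $c_k^m\leq b_k^m$ comes from choosing $j=0$ and $Y=\emptyset$ again, so that $A=\Upsilon(D_k)$ for an arbitrary $\Upsilon\in\Gamma_k^m$ is admissible in $\Lambda_k^m$; then
$$c_k^m\leq\max_{(\lambda,v)\in\Upsilon(D_k)}J^m(\lambda,v)=\max_{\xi\in D_k}J^m(\Upsilon(\xi)),$$
and passing to the infimum over $\Upsilon\in\Gamma_k^m$ on the right-hand side yields $c_k^m\leq b_k^m$.

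The heart of the lemma—and the step I expect to be the main obstacle—is (\expandafter{\romannumeral4}), the intersection/excision property, which is the key ingredient that makes the sequence $c_k^m$ behave like genuine multiplicity levels. Given $A=\Upsilon(\overline{D_{k+j}\setminus Y})\in\Lambda_k^m$ and the $\mathbb{Z}_2$-invariant closed set $Z$ with $0\notin\overline{P_2(Z)}$ and $\mathrm{genus}\,\overline{P_2(Z)}\leq i$, I would set $\widetilde Y:=Y\cup\Upsilon^{-1}(Z)$ and show that $\overline{A\setminus Z}=\Upsilon(\overline{D_{k+j}\setminus\widetilde Y})$, so that the excised set is again of the required form with the \emph{same} map $\Upsilon\in\Gamma_{k+j}^m$. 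The crux is the genus bound: I must prove $\mathrm{genus}(\widetilde Y)\leq j+i$, which by the subadditivity of the genus reduces to $\mathrm{genus}\big(\Upsilon^{-1}(Z)\big)\leq i$. For this I would use the odd continuous map $\xi\mapsto P_2(\Upsilon(\xi))=\Upsilon_2(\xi)$ restricted to $\Upsilon^{-1}(Z)$, which lands in $\overline{P_2(Z)}$; since $0\notin\overline{P_2(Z)}$ this map is an odd continuous map into a set avoiding the origin, whence by the monotonicity property of the genus $\mathrm{genus}\big(\Upsilon^{-1}(Z)\big)\leq\mathrm{genus}\,\overline{P_2(Z)}\leq i$. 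Writing $k+j=(k-i)+(j+i)$ and noting $\mathrm{genus}(\widetilde Y)\leq j+i$, we conclude $\overline{A\setminus Z}\in\Lambda_{k-i}^m$, as claimed. The delicate points to handle with care are the closedness of $\widetilde Y$ (using continuity of $\Upsilon$ and closedness of $Z$) and the fact that $0\notin\widetilde Y$, which follows because $\Upsilon(0)\notin Z$ as $P_2(\Upsilon(0))$ need not avoid $0$—so one must instead verify $0\notin\Upsilon^{-1}(Z)$ directly from $0\notin\overline{P_2(Z)}$ together with $\Upsilon_2(0)=0$, giving $P_2(\Upsilon(0))=0\notin\overline{P_2(Z)}$ and hence $\Upsilon(0)\notin Z$.
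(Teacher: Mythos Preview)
Your arguments for (\romannumeral1), (\romannumeral3), and (\romannumeral4) are correct and are exactly the standard genus-excision approach (the paper itself simply refers to \cite{Tanaka2019ANS} and \cite{TanakaCingolaniNonlinearity2021} without details, so your write-up supplies what the paper omits). In particular, your treatment of (\romannumeral4) is sound: the set identity $\overline{A\setminus Z}=\Upsilon(\overline{D_{k+j}\setminus\widetilde Y})$ with $\widetilde Y=Y\cup\Upsilon^{-1}(Z)$ holds because $\Upsilon^{-1}(Z)$ is closed, and the genus bound via the odd map $\Upsilon_2$ is the right mechanism.

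There is, however, a genuine error in your treatment of (\romannumeral2). The inclusion as printed in the lemma is a typo: the correct containment is $\Lambda^m_{k+1}\subset\Lambda^m_k$, not the reverse, and it is this inclusion that yields $c^m_k\le c^m_{k+1}$ (infimum over the smaller family is at least as large). Your reindexing $k+j=(k+1)+(j-1)$ does not work: membership in $\Lambda^m_{k+1}$ with index $j'=j-1$ requires $\hbox{genus}(Y)\le j-1$, whereas you only have $\hbox{genus}(Y)\le j$; moreover when $j=0$ the index $j-1$ is not admissible. Worse, your concluding sentence is internally inconsistent: if indeed $\Lambda^m_{k+1}\supset\Lambda^m_k$ held, the infimum over $\Lambda^m_{k+1}$ would be \emph{no larger}, giving $c^m_{k+1}\le c^m_k$, the opposite of what you claim. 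The fix is immediate: given $A=\Upsilon(\overline{D_{(k+1)+j}\setminus Y})\in\Lambda^m_{k+1}$ with $\Upsilon\in\Gamma^m_{(k+1)+j}$ and $\hbox{genus}(Y)\le j$, write $(k+1)+j=k+(j+1)$ and observe $\hbox{genus}(Y)\le j\le j+1$, so $A\in\Lambda^m_k$. Hence $\Lambda^m_{k+1}\subset\Lambda^m_k$ and $c^m_k=\inf_{\Lambda^m_k}\le\inf_{\Lambda^m_{k+1}}=c^m_{k+1}$.
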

\begin{proof}
The proof is  given in \cite{Tanaka2019ANS} and \cite{TanakaCingolaniNonlinearity2021}.
\end{proof}

\begin{lemma}\label{LemA12}The following statements hold:
\begin{itemize}
\item[(\expandafter{\romannumeral1})]$b_1^m$ and $c_k^m$ are  critical values of $J^m(\lambda,v)$ for all $k \geq2$.
\item[$(\expandafter{\romannumeral2})$]  If $c_k=c_{k+1}=\cdot\cdot\cdot =c_{k+j}:=c$, then $\hbox{genus}(P_2(K^m_c))\geq j+1$.
\end{itemize}
\end{lemma}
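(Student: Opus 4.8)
The plan is to argue by contradiction in both parts, driving everything through the Deformation Lemma (Lemma \ref{LemA11}), which is available precisely because all the values in play are negative: $b_1^m<0$ by Lemma \ref{LemA4}, while $c_k^m\le b_k^m<0$ by Lemma \ref{LemA6}(\expandafter{\romannumeral3}) together with Lemma \ref{LemA4}. Moreover all these values sit above $B_m$, i.e. $B_m\le c_k^m\le b_k^m<0$, the lower bound coming from the fact that every competitor meets $\partial\Omega$ (the $\partial\Omega$-intersection argument of Lemma \ref{LemA4}, combined with Lemma \ref{LemA3}(\expandafter{\romannumeral2})--(\expandafter{\romannumeral3})). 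This leaves a fixed gap $c>B_m-1$ between any critical-candidate level $c$ and the level $B_m-1$ at which the class-defining boundary data live; that gap is what lets the deformation preserve the minimax classes.

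\textbf{Part (\expandafter{\romannumeral1}).} I first treat $b_1^m$. Suppose $K^m_{b_1^m}=\emptyset$ and write $c=b_1^m$. Fix $\overline{\varepsilon}\in(0,1)$ so small that $c-\overline{\varepsilon}>B_m-1$; Lemma \ref{LemA11}(5) then yields $\varepsilon\in(0,\overline{\varepsilon})$ and $\eta$ with $\eta(1,J^{c+\varepsilon})\subset J^{c-\varepsilon}$. Choose $\Upsilon\in\Gamma_1^m$ with $\max_{\xi\in[0,1]}J^m(\Upsilon(\xi))\le c+\varepsilon$ and set $\widetilde{\Upsilon}(\cdot)=\eta(1,\Upsilon(\cdot))$. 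Since $J^m(\Upsilon(0)),J^m(\Upsilon(1))\le B_m-1\le c-\overline{\varepsilon}$, property (2) of Lemma \ref{LemA11} freezes the endpoints, so $\widetilde{\Upsilon}$ still obeys conditions (\expandafter{\romannumeral1})--(\expandafter{\romannumeral2}) of $\Gamma_1^m$ (in particular $\widetilde{\Upsilon}(1)=\Upsilon(1)\notin\Omega$); hence $\widetilde{\Upsilon}\in\Gamma_1^m$, yet $\max J^m(\widetilde{\Upsilon})\le c-\varepsilon<b_1^m$, a contradiction. For $c_k^m$ ($k\ge2$) the argument is identical with $\Lambda_k^m$ in place of $\Gamma_1^m$: a near-optimal $A=\Upsilon(\overline{D_{k+j}\setminus Y})$ is carried to $\eta(1,A)=\widetilde{\Upsilon}(\overline{D_{k+j}\setminus Y})$ with $\widetilde{\Upsilon}=\eta(1,\Upsilon(\cdot))$, where the $\mathbb{Z}_2$-equivariance of $\widetilde{\Upsilon}$ is supplied by the symmetry property (6) of $\eta$, and the boundary constraints of $\Gamma_{k+j}^m$ survive because $\Upsilon(0)$ and $\Upsilon|_{\partial D_{k+j}}$ lie below $B_m-1\le c-\overline{\varepsilon}$; thus $\eta(1,A)\in\Lambda_k^m$ has maximum $\le c-\varepsilon$, contradicting $c_k^m=c$.

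\textbf{Part (\expandafter{\romannumeral2}).} Assume $c:=c_k=\dots=c_{k+j}<0$ and, for contradiction, that $i:=\hbox{genus}(P_2(K_c^m))\le j$. By Lemma \ref{LemA11}(\expandafter{\romannumeral1}), $K_c^m$ is compact and disjoint from $\mathbb{R}\times\{0\}$, so $P_2(K_c^m)$ is compact with $0\notin P_2(K_c^m)$; hence there is a symmetric open neighbourhood $N'$ of $K_c^m$ whose closure $Z=\overline{N'}$ satisfies $0\notin\overline{P_2(Z)}$ and $\hbox{genus}(\overline{P_2(Z)})\le i$, with $N'\subset\hbox{int}(Z)$. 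Apply Lemma \ref{LemA11}(4) with the neighbourhood $N'$ to obtain $\varepsilon$ and $\eta$ with $\eta(1,J^{c+\varepsilon}\setminus N')\subset J^{c-\varepsilon}$, and pick $A\in\Lambda_{k+j}^m$ with $\max_A J^m\le c+\varepsilon$. Lemma \ref{LemA6}(\expandafter{\romannumeral4}) gives $\overline{A\setminus Z}\in\Lambda_{(k+j)-i}^m$, and $0\le i\le j$ forces $k\le(k+j)-i\le k+j$, whence $c_{(k+j)-i}^m=c$. Because $A$ is compact (so $A\subset J^{c+\varepsilon}$, a closed set) and $\overline{A\setminus Z}$ avoids $\hbox{int}(Z)\supset N'$, we get $\overline{A\setminus Z}\subset J^{c+\varepsilon}\setminus N'$; deforming, $\eta(1,\overline{A\setminus Z})\subset J^{c-\varepsilon}$, and it still lies in $\Lambda_{(k+j)-i}^m$ by the class-preservation established in Part (\expandafter{\romannumeral1}). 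Therefore $c=c_{(k+j)-i}^m\le\max J^m(\eta(1,\overline{A\setminus Z}))\le c-\varepsilon$, a contradiction, so $\hbox{genus}(P_2(K_c^m))\ge j+1$.

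The main obstacle is exactly the bookkeeping that keeps the deformed objects admissible. Two ingredients carry it: the fixed level gap $c\ge B_m>B_m-1$, which freezes the endpoint and boundary data (all at levels $\le B_m-1$) under the deformation, and the odd symmetry (6) of $\eta$, which keeps the deformed maps $\mathbb{Z}_2$-equivariant. The delicate point in Part (\expandafter{\romannumeral2}) is the coupling of the genus-drop in Lemma \ref{LemA6}(\expandafter{\romannumeral4}) with the constancy hypothesis: one must check both that $\overline{A\setminus Z}$ is genuinely separated from $K_c^m$ (via $N'\subset\hbox{int}(Z)$, so the deformation applies on $J^{c+\varepsilon}\setminus N'$) and that the reduced index $(k+j)-i$ lands in the range $[k,k+j]$ on which $c^m_\cdot\equiv c$, which is what converts the single-value inequality into the desired contradiction.
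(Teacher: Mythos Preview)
Your proof is correct and follows essentially the same route as the paper's: both parts proceed by contradiction through the Deformation Lemma~\ref{LemA11}, using the level gap between $c$ and $B_m-1$ to freeze the boundary data and the symmetry property (6) of $\eta$ to preserve $\mathbb{Z}_2$-equivariance, and in Part~(\expandafter{\romannumeral2}) combining the genus-continuity neighbourhood with Lemma~\ref{LemA6}(\expandafter{\romannumeral4}) exactly as the paper does. Your write-up is in fact more explicit than the paper's (which defers Part~(\expandafter{\romannumeral1}) to a reference and contains a sign typo in the choice of $A$); the only point you invoke without a full argument is the lower bound $c_k^m\ge B_m$, which requires knowing that every $A\in\Lambda_k^m$ still meets $\partial\Omega$---this is standard in the cited framework but goes slightly beyond what Lemma~\ref{LemA4} literally states.
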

\begin{proof}
 Since the $(PSP)_c$  condition holds for $c<0$ by Lemma \ref{LemA7}, we can develop deformation theory given in Lemma \ref{LemA11}. We can also observe that the negative minimax values $b_1^m$ and $c_k^m$
are stable under the deformation. Thus (\expandafter{\romannumeral1}) follows from Lemma \ref{LemA11}, see  \cite{Myself2021JMAA} for details. Essentially,  the proof of (\expandafter{\romannumeral2}) is similar as Proposition 3.3 in \cite{Tanaka2019ANS}. Here we give some details for completeness. By Lemma \ref{LemA11}--(\expandafter{\romannumeral1}),
$P_2(K_c^m)$ is compact, symmetric with respect to $0$ and $0\notin P_2(K_c^m)$.
 By the fundamental properties of genus \cite{Rabinowitzbook}, we have
\begin{itemize}
\item[(1)] $\hbox{genus}(P_2(K_c^m))\leq \infty$,
\item[(2)] there exists $\delta>0$ small such that $\hbox{genus}(\overline{P_2(N_\delta(K_c^m))})=\hbox{genus}(P_2(K_c^m))$.
\end{itemize}
Let $\varepsilon\in (0,1)$ and $A\in\Lambda^m_{k+j}$ such that $A\subset {(J^m)}^{c-\varepsilon}$. By Lemma \ref{LemA11}--(4), one has
 $$\eta(1,\overline{A\setminus P_2(N_\delta(K_c))})\subset {(J^m)}^{c-\varepsilon}.$$
 We argue by contradiction that $\hbox{genus}(P_2(K_c^m))\leq j$.  By Lemma 2.4--(\expandafter{\romannumeral4}) in \cite{Tanaka2019ANS}, one can check that
 $$\overline{A\setminus P_2(N_\delta(K_c^m))}\in \Lambda^m_{k+j-\hbox{genus}(\overline{P_2(N_\delta(K_c^m))})}\subset\Lambda_{k}^m.$$
Through a similar argument to (\expandafter{\romannumeral1}), we have  $\eta(1,\overline{A\setminus P_2(N_\delta(K_c^m))})\in \Lambda^m_{k}$, and hence
$$J^m(\eta(1,\overline{A\setminus P_2(N_\delta(K_c^m))}))\geq c.$$
This leads to $c\leq c-\varepsilon$, a contradiction.
\end{proof}
\section{The proofs of main results}

\textbf{ The proof of Theorems \ref{Thm1.10}--\ref{Thm1.1} }
\begin{proof}
By Lemma \ref{LemCC1} and Lemma \ref{LemA12}, it suffices to prove that $b_1^m$ be least energy on
$$\{u : u \in S(m),\mu\in\mathbb{R} ~\hbox{and} ~(u,\mu)~ \hbox{solves} ~(P_{\mu,m})\}.$$
 We argue by contradiction that  there exists  $(\mu_\ast,u_\ast)$ solves $(P_{\mu,m})$ with $E(u_\ast)<b_1^m<0$, that is,  $(\mu_\ast,v_\ast)$ satisfies
$$-\Delta v +\mu f(v)f^\prime(v)=g[f(v)]f^\prime(v)$$
and $\|f(v_\ast)\|_2^2=m$, where $v_\ast:=f^{-1}(u_\ast)$.
By corollary B.4 in \cite{Willem1996book}, we have $v_\ast$ satisfies  Pohozaev identity
$$\frac{N-2}{2}\|\nabla v_\ast\|_2^2+{ N\mu_\ast}\|f(v_\ast)\|_2^2-N\int_{\mathbb{R}^N}G[f(v_\ast)]=0.$$
Since
 $$NE(u\ast)-\|\nabla v\ast\|^2+ N\mu_\ast m=\frac{N-2}{2}\|\nabla v_\ast\|_2^2+{ N\mu_\ast}\|f(v_\ast)\|_2^2-N\int_{\mathbb{R}^N}G[f(v_\ast)]=0,$$
 $\mu_\ast>0$.  Next we claim that $v\mapsto J_{\log \mu_\ast}(v)$ satisfies mountain pass geometry, that is, $\log \mu_\ast<\lambda_0$. To this aim, it is suffice to show $J_{\log \mu_\ast}(v)<0$ for some $v\in H^1_r(\mathbb{R}^N)$. Recall that  $H(s):=-\frac{\mu_\ast}{2}f^2(s)+G[f(s)]$.
If $N\geq3$, by $P(\lambda_\ast,v_\ast)=0$, we have
\begin{align*}
\int_{\mathbb{R}^N}H(v_\ast)= {N-2\over 2N}\|\nabla v_\ast\|_2^2>0.
\end{align*}
Thus, we take $v=v_\ast$ for  $N\geq3$.
If $N=2$, by $P(\lambda_\ast,v_\ast)=0$, we have $$\int_{\mathbb{R}^N}H(v_\ast)=0$$ and
$$\frac{d}{ds}\int_{\mathbb{R}^N}H(sv_\ast)\big|_{s=1}=\int_{\mathbb{R}^N}-\frac{\mu_\ast}{2}f(v_\ast)f^\prime(v_\ast)v_\ast+g[f(v_\ast)]f^\prime(v_\ast)v_\ast=\int_{\mathbb{R}^N}|\nabla v_\ast|^2>0.$$
So we can choose some $t_\ast\rightarrow1^+$ such that $\int_{\mathbb{R}^N}H(t^\ast v_\ast)>0.$  Thus, we take $v=v_\ast$ for  $N=2$.
Consequently, as $\theta\rightarrow\infty$,
\begin{align*}
J_{\log \mu_\ast}(v(\cdot/\theta))={1\over 2}\theta^{N-2}\|\nabla
v\|_2^2-\theta^N\int_{\mathbb{R}^N}H(v)\rightarrow-\infty.
\end{align*}
Fix large $\theta>0$ and let
\begin{equation*}
\gamma(t)=
\begin{cases}
v(\cdot/t\theta),~~~\hbox{if}~~0<t\leq1,\\
0,~~~~~~~~~~~\hbox{if}~~t=0.
\end{cases}
\end{equation*}
Clearly,   $\gamma\in{\Gamma}_1(\log \mu_\ast)$. It follows \eqref{eq213} that
$$E(u_\ast)<b_1^m\leq a_1(\log \mu_\ast)-\frac{\mu_\ast}{2}m\leq  \max_{t \in [0,1]}J_{\log \mu_\ast}(\gamma(t))-\frac{\mu_\ast}{2}m=J_{\log \mu_\ast}(v_\ast)-\frac{\mu_\ast}{2}m=E(u_\ast),$$
a contradiction.
\end{proof}

\textbf{ The proof of Theorems \ref{Thm1.6}}

\begin{proof}
If $m>m_1$, by  Lemma \ref{LemA12}, there exists $(\lambda_m,v_m)\in K_{b^1_m}^m$.  Let $u_m:=f(v_m)$.  Then $J^m(\lambda_m,v_m)=b^1_m$ and
$$0=\partial_{\lambda} J^m(\lambda_m,v_m)= {e^{\lambda_m}\over2}\bigg(\|u_m\|_2^2-m\bigg),$$
which implies that $\|u_m\|_2^2=m$ and
\begin{align*}
0>b^1_m=J^m(\lambda_m,v_m)&={1\over 2}\|\nabla v_m\|_2^2+{ e^{\lambda_m} \over2}\big(\|f(v_m)\|^2_2-m\big)-\int_{\mathbb{R}^N} G[f(v_m)]\\
&={1\over 2}\int_{\mathbb{R}^N}\big(1+2u_m^2\big)|\nabla
u_m|^2-\int_{\mathbb{R}^N}G(u_m).
\end{align*}
Consequently, $e(m)\leq b^1_m<0$.
Let $\{u_n\}\subset X$ be a minimizing sequence for $e(m)$, that is, $\{u_n\}\subset S(m)$ with $E(u_n)\rightarrow e_m$.  Let $u_n^\ast$ be the Steiner rearrangement of  $u_n$.  By  $(g_5)$ and $(g_7)$, and  (\expandafter{\romannumeral2})  of Proposition 2.1 in \cite{Shibata2017MZ}, we have
$$\int_{\mathbb{R}^N} G(u_n)=\int_{\mathbb{R}^N} G(u_n^\ast).$$  From standard rearrangement inequalities, we have $\|u_n\|^2_2=\|u_n^\ast\|^2_2$ and
$$\int_{\mathbb{R}^N}\big(1+2(u_n^\ast)^2\big)|\nabla u_n^\ast|^2\leq\int_{\mathbb{R}^N}\big(1+2u_n^2\big)|\nabla u_n|^2.$$  Thus,  $\{u_n\}$ can be replaced by  $\{u_n^\ast\}$ as  minimizing sequence. We still denote  $\{u_n^\ast\}$ by  $\{u_n\}$ for  simplicity.
By the Gagliardo-Nirenberg inequality, for any $2<r<22^\ast$, there exists $c(r,N)>0$ such that
\begin{equation}\aligned\label{eqaa1}\|u_n\|_r^r&\leq c(r,N)\big(\|u_n\|_2^2\big)^{\frac{3N+2-(N-2)(r-1)}{2N+4}} \big(\|\nabla u_n\|_2^2\big)^{\frac{N(r-2)}{2N+4}}\\
&\leq c(r,N)m^{\frac{3N+2-(N-2)(r-1)}{2N+4}}\bigg(\int_{\mathbb{R}^N}\big(1+2u_n^2\big)|\nabla u_n|^2\bigg)^{\frac{N(r-2)}{2N+4}}.
\endaligned\end{equation}
For any  $2<r<p$ and  $\varepsilon>0$, by  $(g_1)-(g_3)$, there exists $C_\varepsilon>0$ such that
$$|G(t)|<\varepsilon|t|^2+\varepsilon|t|^p+C_\varepsilon|t|^r,$$
 Consequently, for small $\varepsilon$,
\begin{align*}E(u_n)&={1\over 2}\int_{\mathbb{R}^N}\big(1+2u_n^2\big)|\nabla
u_n|^2-\int_{\mathbb{R}^N}G(u_n)\\
&\geq{1\over 2}\int_{\mathbb{R}^N}\big(1+2u^2\big)|\nabla
u|^2-\varepsilon\|u_n\|_2^2-C_\varepsilon\|u_n\|_r^r-\varepsilon\|u_n\|_p^p\\
&\geq\int_{\mathbb{R}^N}\big(1+2u_n^2\big)|\nabla u_n|^2-c(r,N)C_\varepsilon m^{\frac{3N+2-(N-2)(r-1)}{2N+4}} \bigg(\int_{\mathbb{R}^N}\big(1+2u_n^2\big)|\nabla u_n|^2\bigg)^{\frac{N(r-2)}{2N+4}}\\
&~~~~-c(p,N)\varepsilon m^{\frac{3N+2-(N-2)(p-1)}{2N+4}}\int_{\mathbb{R}^N}\big(1+2u_n^2\big)|\nabla u_n|^2-\varepsilon m\\
&\geq\frac{1}{4}\int_{\mathbb{R}^N}\big(1+2u_n^2\big)|\nabla u_n|^2-m-Cc(r,N) m^{\frac{3N+2-(N-2)(r-1)}{2N+4}} \bigg(\int_{\mathbb{R}^N}\big(1+2u_n^2\big)|\nabla u_n|^2\bigg)^{\frac{N(r-2)}{2N+4}}
\end{align*}
Since  $2<r<p=4+\frac{4}{N}$ and $e(m)<0$,  $\{u_n\}$ and $\{\nabla u_n^2\}$ are bounded in $H^1(\mathbb{R}^N)$ and  $L^2(\mathbb{R}^N)$, respectively. By the interpolation inequality,  we may assume that
\begin{equation*}
\label {eqD1}
\begin{cases}
u_n\rightharpoonup u~~\hbox{in}~~H_r^1(\mathbb{R}^N),\\
u_n\rightarrow u~~\hbox{in}~~L^r(\mathbb{R}^N)~~\hbox{for}~~2<r<22^\ast,\\
u_n\rightarrow u~~a.e.~~\hbox{in}~~\mathbb{R}^N.
\end{cases}
\end{equation*}
A  standard argument implies that $$\int_{\mathbb{R}^N}G(u_n)\rightarrow\int_{\mathbb{R}^N}G(u_n)$$ and
$$\liminf_{n\rightarrow\infty}\int_{\mathbb{R}^N}\big(1+2u_n^2\big)|\nabla u_n|^2\leq \int_{\mathbb{R}^N}\big(1+2u^2\big)|\nabla u|^2.$$
Then $$E(u)\leq\liminf_{n\rightarrow\infty}E(u_n)=e(m)<0.$$
From this, we have $u\neq0$. To prove Theorems \ref{Thm1.6}, it suffices to prove that $\|u\|_2^2=m$. We argue by contradiction that $0<\|u\|_2^2=c<m$, because $0<\|u\|_2^2\leq m$.
Consider the scaling $v(\cdot)=u\big(\sigma^{-\frac{1}{N}}\cdot\big)$,  where $\sigma=\frac{m}{\rho}$.  Then $\|v\|_2^2=m$ and
\begin{align*}E(v)&=\bigg(\frac{m}{\rho}\bigg)^{1-\frac{2}{N}}\bigg({1\over 2}\int_{\mathbb{R}^N}\big(1+2u^2\big)|\nabla
u|^2\bigg)-\frac{m}{\rho}\int_{\mathbb{R}^N}G(u)\\
&\leq\frac{m}{\rho}\bigg({1\over 2}\int_{\mathbb{R}^N}\big(1+2u^2\big)|\nabla
u|^2\bigg)-\frac{m}{\rho}\int_{\mathbb{R}^N}G(u)\\
&< E(u),
\end{align*}
which gives a contradiction, and the proof is completed.
\end{proof}

\textbf {Acknowledgments}

This work is supported partially by NSFC (No. 12161091).
\bibliographystyle{plain}
\bibliography{Yang-Zhao}
\end{document}